\newcommand{\wfont}[1]{\textit{#1}}
\newcommand{\ffont}[1]{\textsf{#1}}
\newcommand{\mfont}[1]{\mathfrak{#1}}
\newcommand{\sfont}[1]{\textbf{\textit{#1}}}
\newcommand{\tfont}[1]{\textbf{#1}}
\newcommand{\romb}[1]{ \langle #1 \rangle }
\newcommand{\sprv}{\begin{proof}}
\newcommand{\eprv}{\end{proof}}
\newcommand{\word}{\sfont{W}}
\newcommand{\defiff}{\stackrel{\mathrm{def}}{\iff}}
\newcommand{\wore}{\precsim}
\newcommand{\wor}{\prec}
\newcommand{\NF}{\sfont{NF}}
\newcommand{\PA}{\tfont{PA}}
\newcommand{\GLP}{\tfont{GLP}}
\newcommand{\ZF}{\tfont{ZF}}
\newcommand{\On}{\sfont{On}}
\newcommand{\multi}{\mathcal{P}^{<\omega}_{\mathrm{multi}}}
\newcommand{\pfin}{\mathcal{P}^{<\omega}}
\newcommand{\Lim}{\sfont{Lim}}
\newcommand{\dom}{\mathrm{dom}}
\newcommand{\ran}{\mathrm{ran}}
\newcommand{\Th}{\mathbf{Th}}
\newcommand{\refl}[1]{\mathcal{R}_{#1}}
\newcommand{\WSExt}[1]{#1'}
\newcommand{\WMSExt}[1]{#1''}
\newcommand{\HFExt}[1]{#1^+}
\newtheorem{theorem}{Theory}
\newtheorem{fact}{Fact}
\newtheorem{remark}{Remark}
\newtheorem{lemma}{Lemma}
\newtheorem{corollary}{Corollary}
\newtheorem{proposition}{Proposition}
\begin{document}

\author{Fedor~Pakhomov\thanks{This work was partially supported by RFFI grant 12-01-00888\_a and Dynasty foundation.}\\Steklov Mathematical Institute,\\Moscow\\ \texttt{pakhfn@mi.ras.ru}}
\date{December 2013}
\title{On Elementary Theories of Ordinal Notation Systems based on Reflection Principles}
\maketitle

\begin{abstract} We consider the constructive ordinal notation system for the ordinal  $\varepsilon_0$ that were introduced by L.D.~Beklemishev. There are fragments of this system that are ordinal notation systems for the smaller ordinals $\omega_n$ (towers of $\omega$-exponentiations of the height $n$).  This systems are based on Japaridze's provability logic  $\tfont{GLP}$.  They are closely related with the technique of ordinal analysis of $\tfont{PA}$ and fragments of $\tfont{PA}$ based on iterated reflection principles. We consider this  notation system and it's fragments as structures with the signatures selected in a natural way. We prove that the full notation system and it's fragments, for ordinals  $\ge\omega_4$, have undecidable elementary theories. We also prove that the fragments of the full system, for ordinals $\le\omega_3$, have decidable elementary theories. We obtain some results about decidability of elementary theory, for the ordinal notation systems with  weaker signatures. 
\end{abstract}

\section{Introduction} The problems of calculation of the proof-theoretic ordinal of a theory are well-known in proof theory. G.~Gentzen was the pioneer in this field \cite{Gen36}; there is an overview on this subject by M.~Rathjen \cite{Rat99}. 

 Proof-theoretic ordinals of theories normally are calculated in the terms of constructive ordinal notation systems. The general theory of such a systems is due to A.~Church and S.C.~Kleene\cite{Kln38}\cite{Chr38}. The classical method to encode ordinal notation systems is Kleene $\mathcal{O}$\cite{Kln38}. The ordinal analysis usually involve the ordinal notation systems in another form; we describe the typical kind of systems that are used in ordinal analysis. Some functions $f_0,f_1,\ldots$ from ordinals to ordinals are considered. These functions may have different arity and some of them are $0$-ary functions, i.e. constants. The set  $\sfont{T}$ of all closed terms built of  $f_0,f_1,\ldots$ is considered. There is the binary predicate $<_v$ that compares the values of the terms from $\sfont{T}$. For systems that are normally considered, the predicate $<_v$ is computable by a simple algorithm. An ordinal $\alpha$ is such that, for every ordinal $\beta<\alpha$, the ordinal $\beta$ is equal to the value of some term from $\sfont{T}$. From the recursiveness of $<_v$ it follows that the term value equality predicate $=_v$ is recursive too. And also, it follows that the predicate $\ffont{P}_\alpha(x)$ $$\ffont{P}_\alpha(t)\defiff \beta<\alpha\mbox{, where $\beta$ is the value of a term $t$}$$ is recursive.  Hence the recursive structure $(\{t\in \sfont{T}\mid \ffont{P}_\alpha(t)\}/{=_v},<_v)$ is isomorphic to $(\alpha,<)$. We consider the ordinal notation system as the recursive structure  $(\sfont{T}/{=_v},<_v,f_0,f_1,\ldots)$. 

In the present paper we consider the decidability of the elementary theory problem for some ordinal notation systems.

For ordinals without additional structure the decidability of elementary theory problem were studied by  A.~Tarski and A.~Mostowski \cite{TarsMos49}\cite{DMT78}. It were shown that, for every ordinal $\alpha$, the elementary theory $\Th(\alpha,<)$ is decidable. Later this result was strengthen by  U.R.~B\"uchi. He had shown that, for every ordinal $\alpha$, the weak monadic theory of the structure $(\alpha,<)$ is decidable \cite{Buch65}. He also had constructed an interpretation of the elementary theory  $\Th(2^\alpha,<,+)$ in the weak monadic theory of $(\alpha,<)$. Thus he had shown that the first is decidable.

The ordinal $\varepsilon_0$ is the proof-theoretic ordinal of $\PA$ \cite{Gen36}. A cofinal sequence for an ordinal $\alpha$ is a sequence of ordinals $\beta_0,\beta_1,\ldots$ such that every $\beta_i<\alpha$ and  $\sup\{\beta_i\mid i\in \omega\}=\alpha$. There is the standard choice of cofinal sequences for the ordinals less than $\varepsilon_0$.  L.~Braud \cite{Brau09} had proved the decidability of the weak monadic theory of  $(\alpha,<,\ffont{Cs})$, where  $\alpha$ is some ordinal less than $\varepsilon_0$ and $\ffont{Cs}(x,y)$ is the predicate $$ \ffont{Cs}(\beta,\gamma) \defiff \mbox{ $\gamma$ is a member of the standard cofinal sequence for $\beta$}.$$ 

There are several different ``natural'' ordinal notation systems for the ordinals  below  $\varepsilon_0$ \cite{Lee07}. One of them were introduced by L.D.~Beklemishev \cite{Bek04-1}; we give it in the form that is slightly different from the form from \cite{Bek04-1}. There is a set $\word_\omega$, an equivalence relation $\sim$ on $\word_\omega$, and a binary relation $\wor$ on $\word_\omega$ such that $\wor$ is compatible with $\sim$; $(\varepsilon_0,<)$ and $(\word_\omega/{\sim},\wor)$ are isomorphic. There is a constant $\varLambda\in\word_\omega$ and functions $a_i\colon \word_\omega\to \word_\omega$, for every number $i$. Functions $a_i$ are compatible with $\sim$.  Every element of $\word_\omega$ is the value of the unique closed term built of $\varLambda,a_0,a_1,\ldots$.  Structure $(\word_\omega/{\sim},\wor,\varLambda,a_0,a_1,\ldots)$ is an ordinal notation system up to  $\varepsilon_0$. For every $n$, we denote by $\word_n$ the set of the values of all terms built of $\varLambda,a_0,\ldots,a_n$. The structures  $(\word_n/{\sim},\wor,\varLambda,a_0,\ldots,a_n)$ are ordinal notation systems for the smaller ordinals $\omega_{n+1}$. Here ordinals $\omega_n$ are defined as the following:
\begin{enumerate}
\item $\omega_0=1$;
\item $\omega_{n+1}=\omega^{\omega_n}$;
\item $\omega_\omega=\varepsilon_0=\lim \limits_{n\to \omega} \omega_n$.
\end{enumerate}

We prove that the elementary theory  $\Th(\word_\omega/{\sim},\wor,\varLambda,a_0,a_1,\ldots)$ is undecidable. For every ordinal $\alpha\in[3,\omega]$, we prove that the elementary theory $\Th(\word_\alpha/{\sim},\wor,a_1,a_3)$ is undecidable. Also, for every $\alpha\in[2,\omega]$, we show that the elementary theory $\Th(\word_\alpha/{\sim},\wor,\varLambda,a_0,a_1,a_2)$ is decidable. 

There is a natural binary operation $\land\colon \word_\omega\times \word_\omega\to\word_\omega$; $\land$ is compatible with $\sim$. For every $\alpha\le\omega$, the set $\word_\alpha$ is closed under $\land$. In \cite{Pak12} it was shown  that the elementary theory $\Th(\word_\alpha/{\sim},\land)$ is undecidable, for every $\alpha\in[2,\omega]$. In that paper it was also proved that the elementary theory $\Th(\word_\alpha/{\sim},\land)$ is decidable, for every $\alpha\in\{0,1\}$. There were shown that, for every $\alpha\le \omega$, the relation  $\wor$ and functions $a_i$ are definable in the structure $(\word_\alpha/{\sim},\land)$. In the present paper we consider structures with the same domains as in the paper \cite{Pak12} but our signatures have less definability power than the signatures from \cite{Pak12}. This weakening have impact on decidability-undecidability border for $\alpha$. The elementary theory $\Th(\word_2/{\sim},\land)$ is undecidable, but the elementary theory $\Th(\word_2/{\sim},\wor,\varLambda,a_0,a_1,a_2)$ is decidable.


\subsection{Ordinal analysis of $\PA$ by iterated reflection principles}
In the subsection we briefly describe the origin of the ordinal notation system under consideration (there are more information on the subject in \cite{Bek03}, \cite{Bek04-1}, \cite{Bek05}). 

We consider recursively axiomatizable theories in the language of the first order arithmetic $(0,S,+,\cdot)$ as algorithms enumerating non-logical axioms. It is well-known that one can formally work with recursively axiomatizable theories within powerful enough arithmetic theories.

There are classes of arithmetical formulas  $\Sigma_n$. For a number $n$, the class $\Sigma_n$ consists of all formulas of the form $$\exists x_1\ldots \exists x_{m_{1}}\forall x_{m_{1}+1}\ldots \forall x_{m_2}\ldots \mathrm{Q}_{n} x_{m_{n-1}}\ldots \mathrm{Q}_{n} x_{m_{n}}\ffont{A},$$ where $\ffont{A}$ is a formula with bounded quantifiers, $\mathrm{Q}_{n}=\forall$, if  $n$ is even and $\mathrm{Q}_{n}=\exists$, if $n$ is odd. There are formulas $\ffont{RFN}_{\Sigma_n}(x)$ such that, for every number $n$ and arithmetic recursively axiomatizable theory $\tfont{T}$, the proposition $\ffont{RFN}_{\Sigma_n}(\tfont{T})$ means ``for every formula $\ffont{A}(x)\in\Sigma_n$, if $\tfont{T}$ proves $\ffont{A}(k)$, for every individual number $k$, then $\forall x \ffont{A}(x)$ is true.'' We note that $\ffont{RFN}_{\Sigma_0}(\tfont{U})$ is equivalent to a proposition that means ``$\tfont{U}$ is consistent.'' 

There is a relation on arithmetic recursively axiomatizable theories $<_{Con}$: 
$$\tfont{U}_1<_{Con}\tfont{U}_2 \defiff \tfont{U}_2\vdash \ffont{RFN}_{\Sigma_0}(\tfont{U}_1).$$
We consider suitable subtheory $\tfont{T}_0$ of Peano Arithmetic $\PA$; we choose $\tfont{T}_0=\tfont{I}\boldsymbol\Delta_0+\ffont{Exp}$ (there is a definition of this theory in \cite{HajPud98}), to be precise. We give operations $\refl{0}$,$\refl{1}$,$\ldots$ on arithmetic recursively axiomatizable theories: $$\refl{n}\colon \tfont{U} \longmapsto \tfont{T}_0+ \ffont{RFN}_{\Sigma_n}(\tfont{U}).$$ 

If $\tfont{T}$ and $\tfont{U}$ are arithmetic recursively axiomatizable theories with equal sets of theorems, then we write $\tfont{T}\equiv\tfont{U}$.

We consider the set of arithmetic recursively axiomatizable theories $\mathfrak{S}_\omega$; $\mathfrak{S}_{\omega}$ is the closure of $\{\tfont{T}_0\}$ under the application of all $\refl{k}$. Note that $(\mathfrak{S}_\omega,\tfont{T}_0,\equiv,<_{Con},\refl{0},\refl{1},\ldots)$ is isomorphic to combinatorially defined structure $(\word_{\alpha},\varLambda,\sim,\wor,a_0,a_1,\ldots)$; we will define the later structure in  the next section. We call elements of $\word_{\omega}$ and $\mathfrak{S}_\omega$ corresponding, if they are the images of each other under the isomorphism.

We consider he theory that is axiomatizable by all axioms of theories from $\mathfrak{S}_\omega$. That theory is just an alternative axiomatization of $\PA$. By a transfinite induction on $(\word_{\omega},\wor)$ it can be proved that the theories from  $\mathfrak{S}_\omega$ are consistent. From the later it follows that $\PA$ is consistent. In fact the the step of the transfinite induction can be proved in the weak subtheory of $\PA$. Thus $(\word_{\omega}/{\sim},\varLambda,\wor,a_0,a_1,\ldots)$ is an ordinal notation system up to $\varepsilon_0$ that is extracted directly from the described proof of the consistency of $\PA$. 


\section{Ordinal notation system}
In the section we give a new combinatorial definition of the ordinal notation system we are interested in. Note that early this system were considered in the context of Japaridze's provability logic $\GLP$ \cite{Bek03}\cite{Bek04-2}.  The equivalency of the new definition with the older one can be proved using several propositions from \cite{Bek04-2}; essentially, we show that in Fact \ref{word_theory_correspondence}.

We denote by $\word_{\omega}$ the set of all strings over the alphabet of all natural numbers $0,1,\ldots$. We call elements of $\word_{\omega}$  {\it words}. We denote words by symbols $\wfont{A},\wfont{B},\wfont{C},\wfont{D}$,$\ldots$. For all $\wfont{A},\wfont{B}\in \word_{\omega}$ we denote by $\wfont{A}\wfont{B}$ the concatenation of $\wfont{A}$ and $\wfont{B}$. For a word $\wfont{A}\in \word_{\omega}$ and a number $n\in \omega$ we denote by $\wfont{A}^n$ the word $\underbrace{\wfont{A}\wfont{A}\ldots\wfont{A}}_{\mbox{$n$ times}}$. We denote by $\varLambda$ the empty word. We denote  by $|\wfont{A}|$ the length of $\wfont{A}$.

For every  $k\in \omega$, we denote by $\sfont{S}_k$ the set of all words $\wfont{A}$ from $\word_{\omega}$ such that all symbols from $\wfont{A}$ are $\ge k$. For $\alpha\le \omega$, we denote by $\word_\alpha$ the set of all words $\wfont{A}$ from $\word_{\omega}$ such that all symbols from $\wfont{A}$ are $\le \alpha$.

We start the definition of the preorder $\wore$ on $\word_{\omega}$. In the terms of $\wore$ we give an equivalence relation $\sim$ and binary relation $\wor$: 
$$\wfont{A}\sim \wfont{B}\defiff \wfont{A} \wore \wfont{B} \& \wfont{B}\wore \wfont{A},$$
$$\wfont{A}\wor \wfont{B}\defiff \wfont{A} \wore \wfont{B} \& \lnot \wfont{B}\wore \wfont{A}.$$
Further without any additional comments we use $\wore$ as the standard  preorder on $\word_{\omega}$. The previous sentence apply to  notions related to some comparing, i.e. ``the minimal element of a set $\sfont{A}\subseteq \word_{\omega}$'',  ``a word $\wfont{A}$ is less (greater, not less, not greater) than a word $\wfont{B}$'', etc. We say that a sequence $(\wfont{A}_1,\ldots,\wfont{A}_n)$ of elements of $\word_{\omega}$  is lexicographic not greater than a sequence $(\wfont{B}_1,\ldots \wfont{B}_m)$ of elements of $\word_{\omega}$ iff either $n\le m$ and $\wfont{A}_i\sim \wfont{B}_i$ or there exists $s<\min(m,n)$ such that, for numbers $i$ from $1$ to $s$, we have $\wfont{A}_i \sim \wfont{B}_i$  and $\wfont{A}_{s+1}\wore\wfont{B}_{s+1}$. Note that if $\wore$ is a linear preorder on a set $\sfont{A}\subseteq \word_{\omega}$, then the lexicographical comparison on the set $\sfont{A}^{<\omega}$  of all sequences with elements from $\sfont{A}$ is a linear preorder.   

By definition we put $\varLambda\wore \varLambda$. 

Suppose $r$ is a natural number and  $\wore$-comparisons are defined for all pairs $(\wfont{A}',\wfont{B}')$ such that, for some $n$, the word $\wfont{A}'\wfont{B}'$ lies in $\sfont{S}_{n}\cap \word_{n+r-1}$. Let us determine the $\wore$-comparison for all pairs $(\wfont{A},\wfont{B})$ such that $\wfont{A}\wfont{B}\in\sfont{S}_{n}\cap \word_{n+r}$, for some $n$. We consider pair $(\wfont{A},\wfont{B})$ such that $\wfont{A}\wfont{B}\in\sfont{S}_{n}\cap \word_{n+r}$, where $n$ is the minimal symbol from $\wfont{A}\wfont{B}$. Obviously, we can find the unique number $k$, words $\wfont{A}_1,\ldots,\wfont{A}_k\in \sfont{S}_{n+1}\cap \word_{n+r}$, natural number $l$ and words $\wfont{B}_1,\ldots,\wfont{B}_l\in \sfont{S}_{n+1}\cap \word_{n+r}$ such that $\wfont{A}=\wfont{A}_1n \ldots n \wfont{A}_k$ and $\wfont{B}=\wfont{B}_1n \ldots n\wfont{B}_l$. Note that we have all pairwise  $\wore$-comparison between elements of $\{\wfont{A}_1,\ldots,\wfont{A}_k,\wfont{B}_1,\ldots,\wfont{B}_l\}$. Suppose $(\wfont{C}_1,\ldots,\wfont{C}_f)$ and $(\wfont{D}_1,\ldots,\wfont{D}_g)$ are lexicographically maximal subsequences of $(\wfont{A}_1,\ldots,\wfont{A}_l)$ and $(\wfont{B}_1,\ldots,\wfont{B}_k)$, respectively. We give the $\wore$-comparison of $\wfont{A}$ and $\wfont{B}$ as the lexicographical comparison of the sequences $(\wfont{C}_1,\ldots,\wfont{C}_f)$ and $(\wfont{D}_1,\ldots,\wfont{D}_g)$. 

By simultaneous induction on $r$ we prove the two following propositions, for all $r$:
\begin{enumerate}
\item for all $n$, the binary relation $\wore$ is a linear preorder on the set $\sfont{S}_n\cap \word_{n+r}$;
\item Remark \ref{wore_characterization}, for the case of $\wfont{A}_1,\ldots,\wfont{A}_k\in \sfont{S}_{n+1}\cap \word_{n+r}$ and $\wfont{B}_1,\ldots,\wfont{B}_l\in \sfont{S}_{n+1}\cap \word_{n+r}$.
\end{enumerate}

\begin{remark} \label{wore_characterization} Suppose we have a natural number $n$, words $\wfont{A}_1,\ldots,\wfont{A}_k\in \sfont{S}_{n+1}$, and words $\wfont{B}_1,\ldots,\wfont{B}_l\in \sfont{S}_{n+1}$. And suppose $(\wfont{C}_1,\ldots,\wfont{C}_f)$ and $(\wfont{D}_1,\ldots,\wfont{D}_g)$ are lexicographically maximal subsequences of $(\wfont{A}_1,\ldots,\wfont{A}_k)$ and $(\wfont{B}_1,\ldots,\wfont{B}_l)$, respectively. Then $\wfont{A}_1n\ldots n\wfont{A}_k\wore \wfont{B}_1 n \ldots n\wfont{B}_l$ iff $(\wfont{C}_1,\ldots,\wfont{C}_f)$ is lexicographically not greater than $(\wfont{D}_1,\ldots,\wfont{D}_g)$.   
\end{remark}

Thus $\wore$ is a linear preorder on $\word_{\omega}$. 

\begin{fact} \label{word_theory_correspondence} For all $n_1,\ldots, n_k$, $m_1,\ldots,m_l$ we have the following equivalences:
\begin{enumerate} 
\item$n_1\ldots n_k\wor m_1 \ldots m_l \iff $ 

$\refl{n_k}(\ldots (\refl{n_1}(\tfont{T}_0))\ldots)<_{Con}\refl{m_l}(\ldots(\refl{m_1}(\tfont{T}_0))\ldots);$
\item$n_1\ldots n_k\sim m_1 \ldots m_l \iff$

$\refl{n_k}(\ldots (\refl{n_1}(\tfont{T}_0))\ldots)\equiv\refl{m_l}(\ldots(\refl{m_1}(\tfont{T}_0))\ldots).$
\end{enumerate}
\end{fact}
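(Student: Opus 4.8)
The plan is to establish the two equivalences simultaneously by induction on the syntactic complexity of the words, matching the recursive clauses defining $\wore$ on the combinatorial side against the known properties of the theories $\refl{n}(\cdots)$ on the proof-theoretic side. The essential input is the body of results about $\GLP$-style iterated reflection developed by Beklemishev (in particular the reduction properties of $\ffont{RFN}_{\Sigma_n}$ and the behaviour of $<_{Con}$ under the operations $\refl{k}$), which are exactly the facts that were used to calibrate the order type $\varepsilon_0$. First I would isolate on the proof-theoretic side the combinatorial law that governs $\refl{n_k}(\cdots(\refl{n_1}(\tfont{T}_0)))<_{Con}\refl{m_l}(\cdots(\refl{m_1}(\tfont{T}_0)))$: the key reduction is that whenever the minimal index $n$ occurring in a string $\wfont{A}_1 n \cdots n \wfont{A}_k$ is fixed, the $<_{Con}$-behaviour of the corresponding theory depends only on the lexicographically maximal subsequence of the $\wfont{A}_i$'s (the "non-redundant" pieces above level $n$), because lower iterated reflections over a fixed base collapse under consistency-comparison. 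This is precisely the mirror image of the combinatorial clause defining the $\wore$-comparison of $\wfont{A}$ and $\wfont{B}$ via lexicographic comparison of maximal subsequences $(\wfont{C}_1,\ldots,\wfont{C}_f)$ and $(\wfont{D}_1,\ldots,\wfont{D}_g)$, and it is also exactly what Remark \ref{wore_characterization} records.

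The induction would be organized by $r$, the same parameter used in the definition of $\wore$, so that at stage $r$ I may assume both equivalences already hold for all pairs of words whose combined string lies in some $\sfont{S}_n \cap \word_{n+r-1}$. Given a pair $(\wfont{A},\wfont{B})$ with $\wfont{A}\wfont{B}\in\sfont{S}_n\cap\word_{n+r}$ and $n$ minimal, I decompose $\wfont{A}=\wfont{A}_1 n \cdots n \wfont{A}_k$ and $\wfont{B}=\wfont{B}_1 n \cdots n \wfont{B}_l$ with all $\wfont{A}_i,\wfont{B}_j\in\sfont{S}_{n+1}\cap\word_{n+r}$. By the inductive hypothesis the equivalences (1) and (2) hold for all the sub-comparisons among $\wfont{A}_1,\ldots,\wfont{A}_k,\wfont{B}_1,\ldots,\wfont{B}_l$, so the combinatorial lexicographic comparison of the maximal subsequences corresponds step-by-step to the proof-theoretic lexicographic comparison of the same maximal subsequences. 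It then remains to verify the base law: that on the proof-theoretic side, $\refl{n_k}(\cdots)<_{Con}\refl{m_l}(\cdots)$ (with $n$ the common minimal top-level index stripped off) holds if and only if the lexicographically maximal subsequence of the $\wfont{A}$-blocks is $<_{Con}$-below that of the $\wfont{B}$-blocks — here "lexicographically" on the proof side means: compare the first blocks by $<_{Con}$; if they are $\equiv$-equal, compare the next; a strictly longer sequence extending an equal prefix is $<_{Con}$-strictly larger. Both the "collapse of redundant blocks" and this lexicographic law are available from \cite{Bek04-2}; I would cite the relevant propositions there and assemble them into a single statement of the required form.

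The main obstacle I anticipate is not conceptual but bookkeeping: lining up the combinatorial notion of "lexicographically maximal subsequence" with the proof-theoretic normal form of iterated reflection so that the two lexicographic orders literally coincide, including the edge cases where one sequence is a proper prefix of the other (these correspond on the combinatorial side to $n \le m$ with all $\wfont{A}_i\sim\wfont{B}_i$, and on the proof-theoretic side to one theory being $<_{Con}$ a proper iterated-reflection extension of another over the same base). Care is needed because the recursion peels off the \emph{minimal} symbol, which on the proof side corresponds to the \emph{outermost} reflection operator only after one re-reads the normal form correctly; I would set up an explicit dictionary once and for all (minimal symbol $\leftrightarrow$ a designated pivot reflection, blocks above it $\leftrightarrow$ iterated reflections applied on top) and then the verification reduces to quoting the corresponding statements from \cite{Bek04-2}. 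A secondary point to be careful about is well-definedness: the recursion on $r$ must be checked to terminate and cover all pairs, but this is already implicit in the simultaneous induction establishing that $\wore$ is a linear preorder on each $\sfont{S}_n\cap\word_{n+r}$, so I would simply note that the same induction carries the Fact along as an extra clause.
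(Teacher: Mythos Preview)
Your plan is essentially the paper's own strategy: define the arithmetic relation $\ffont{R}$ by $\wfont{A}\,\ffont{R}\,\wfont{B}\Leftrightarrow(\wfont{A}^\star\equiv\wfont{B}^\star)\lor(\wfont{A}^\star<_{Con}\wfont{B}^\star)$, show that $\ffont{R}$ satisfies the same recursive clause as $\wore$ (collapse to the lexicographically maximal subsequence, then lexicographic comparison), and conclude by the obvious induction on $r$ that $\ffont{R}$ and $\wore$ coincide on each $\sfont{S}_n\cap\word_{n+r}$.

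There is, however, one step you are underestimating. You write that the collapse law and the lexicographic law ``are available from \cite{Bek04-2}'' and that you would simply cite them. But the results of \cite{Bek04-2} are proved for the \emph{GLP} side, i.e.\ for the formulas $\wfont{A}^{\#}=\romb{n_k}\cdots\romb{n_1}\top$ and the relation $\GLP\vdash\wfont{A}^{\#}<_0\wfont{B}^{\#}$, not directly for $\wfont{A}^{\star}<_{Con}\wfont{B}^{\star}$. To transfer them you need that, on the class of formulas $\wfont{A}^{\#}$, GLP is arithmetically \emph{complete} for the base theory $\tfont{T}_0=\tfont{I}\boldsymbol\Delta_0+\ffont{Exp}$; the paper explicitly flags that full arithmetical completeness of $\GLP$ over this weak base is \emph{not} known. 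The paper therefore inserts a short but essential argument: soundness gives the forward implications, and the reverse implications are obtained by combining the GLP-side trichotomy (Propositions~3 and~4 of \cite{Bek04-2}) with the fact that on $\omega$-correct theories at most one of $\wfont{A}^{\star}<_{Con}\wfont{B}^{\star}$, $\wfont{A}^{\star}\equiv\wfont{B}^{\star}$, $\wfont{B}^{\star}<_{Con}\wfont{A}^{\star}$ can hold (G\"odel~II plus soundness). Only after this partial completeness is established can Lemma~1(iv), Lemma~2, and Corollary~8 of \cite{Bek04-2} be read as statements about $\equiv$ and $<_{Con}$, which is exactly what your inductive step consumes. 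If you add this bridge explicitly, your outline becomes the paper's proof; without it, the step ``cite the relevant propositions'' does not yet yield statements about $<_{Con}$.
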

\begin{proof} Essentially, we prove that the ordinal notation system that we have defined is equivalent to the ordinal notation system from \cite{Bek04-1}\cite{Bek04-2}. System from \cite{Bek04-1}\cite{Bek04-2} is based on Japaridze's provability logic  $\tfont{GLP}$. We don't give a definition of the logic $\tfont{GLP}$ here, in this proof we assume that a reader is familiar with the logic $\tfont{GLP}$.

 Suppose $\wfont{A}=n_1\ldots n_k$ is a word from $\word_{\omega}$. We denote by $\wfont{A}^{\star}$ the theory $\refl{n_k}(\ldots (\refl{n_1}(\tfont{T}_0))\ldots)$. We denote by $\wfont{A}^{\#}$ the polymodal formula $\romb{n_k}\ldots \romb{n_1}\top$. For polymodal formulas  $\varphi$ and $\psi$, we denote by $\varphi<_0\psi$ the formula $\psi\to\romb{0}\varphi$.

As far as the author knows, it is unknown whether $\tfont{GLP}$ is complete with respect to arithmetical semantics with the basis theory $\tfont{T}_0=\tfont{I}\boldsymbol\Delta_0+\ffont{Exp}$. We prove the completeness for the specific class of formulas. Let us show that for an arbitrary words $\wfont{A},\wfont{B}\in\word_{\omega}$ we have the following: \begin{enumerate}\item$\tfont{GLP}\vdash \wfont{A}^{\#}\;\leftrightarrow\;\wfont{B}^{\#}\iff \wfont{A}^{\star}\equiv\wfont{B}^{\star}$, \item$\tfont{GLP}\vdash \wfont{A}^{\#}<_0 \wfont{B}^{\#}\iff \wfont{A}^{\star}<_{Con}\wfont{B}^{\star}$.\end{enumerate}  
Both $\Rightarrow$ implications here follows from the arithmetic correctness for the logic $\tfont{GLP}$ \cite[Lemma~5.3]{Bek05}. The reverse implications $\Leftarrow$ holds, because
\begin{enumerate}
\item from \cite[Proposition~3]{Bek04-2} and \cite[Proposition~4]{Bek04-2} it follows that at least one of the following propositions holds:
\begin{enumerate}
\item $\tfont{GLP}\vdash \wfont{A}^{\#}<_0\wfont{B}^{\#}$,
\item $\tfont{GLP}\vdash \wfont{A}^{\#}\;\leftrightarrow\;\wfont{B}^{\#}$,
\item $\tfont{GLP}\vdash \wfont{B}^{\#}<_0\wfont{A}^{\#}$;
\end{enumerate}
\item from irreflexivity of $<_{Con}$ on $\omega$-correct theories (it follows from G\"odel Second Incompleteness Theorem) and transitivity of $<_{Con}$(it follows from arithmetical correctness of $\tfont{GLP}$ \cite[Lemma~5.3]{Bek05}) it follows that at most one of the following propositions holds:
\begin{enumerate}
\item $\wfont{A}^{\star}<_{Con}\wfont{B}^{\star}$,
\item $\wfont{A}^{\star}\equiv\wfont{B}^{\star}$,
\item $\wfont{B}^{\star}<_{Con}\wfont{A}^{\star}$.
\end{enumerate}
\end{enumerate}

From the partial arithmetic completeness of $\tfont{GLP}$ it follows that, for words $\wfont{A},\wfont{B},\wfont{C}\in \word_{\omega}$,  we have $$\wfont{A}^{\star}\equiv \wfont{B}^{\star}\; \Rightarrow\;(\wfont{A}\wfont{C})^{\star}\equiv (\wfont{B}\wfont{C})^{\star}.$$  Using our partial arithmetic completeness we reformulate some of results of \cite{Bek04-2}. From \cite[Lemma~1{\it(iv)}]{Bek04-2} it follows that, for a number $n\ge 0$, words $\wfont{A},\wfont{B}\in\sfont{S}_{n+1}$, and word $\wfont{C}\in\word_{\omega}$, the following holds:  $$\wfont{A}^{\star}\equiv \wfont{B}^{\star}\;\Rightarrow\;(\wfont{C}n\wfont{A})^{\star}\equiv (\wfont{C}n\wfont{B})^{\star}.$$ From \cite[Lemma~2]{Bek04-2} and \cite[Corollary 8]{Bek04-2} it follows that for numbers $n\ge 0$, $k\ge 2$ and  words $\wfont{A}_1,\wfont{A}_2,\ldots,\wfont{A}_{k}\in\sfont{S}_{n+1}$ such that $\wfont{A}_{k-1}^{\star}<_{Con}\wfont{A}_k^{\star}$ we have
$$(\wfont{A}_1n\ldots n\wfont{A}_{k-2}n\wfont{A}_{k-1}n\wfont{A}_k)^{\star}\equiv(\wfont{A}_1n\ldots n\wfont{A}_{k-2}n\wfont{A}_{k})^{\star}.$$
We consider the binary relation  $\ffont{R}$ on the set $\word_{\omega}$ $$\wfont{B}_1\; \ffont{R}\; \wfont{B}_2 \defiff (\wfont{B}_1^{\star}\equiv \wfont{B}_2^{\star}) \lor (\wfont{B}_1^{\star}<_{Con}\wfont{B}_2^{\star}).$$
From \cite[Proposition~3]{Bek04-2} and \cite[Proposition~4]{Bek04-2} we conclude that $\ffont{R}$ is a linear preorder on $\word_{\omega}$.
From this four facts we conclude that for a number $n\ge 0$ and words $\wfont{A}_1,\ldots,\wfont{A}_{k}\in\sfont{S}_{n+1}$ we have $$(\wfont{A}_1n\ldots n\wfont{A}_k)^{\star}\equiv(\wfont{C}_1n\ldots n\wfont{C}_f)^{\star},$$
where  $(\wfont{C}_1,\ldots,\wfont{C}_f)$ is the lexicographically maximal subsequence of the sequence $(\wfont{A}_1,\ldots ,\wfont{A}_k)$, with respect to  the linear preorder $\ffont{R}$.

We prove by induction on  $m-n$ that, for all  $m\ge n$ and $\wfont{A},\wfont{B}\in \sfont{S}_n\cap \word_m$, we have $$\wfont{A}\wore \wfont{B}\iff \wfont{A}\; \ffont{R}\; \wfont{B};$$  clearly,  from the induction hypothesis the fact follows. Obviously, the induction basis holds. Assume that the induction hypothesis holds for $n+1$ and $m$. We claim that for two $\ffont{R}$-monotone non-decreasing sequences $(\wfont{A}_1,\ldots,\wfont{A}_k)$ and $(\wfont{B}_1,\ldots,\wfont{B}_l)$ with all elements from $S_{n+1}\cap \word_m$ we have $$\begin{aligned}\mbox{ $(\wfont{B}_1,\ldots,\wfont{B}_l)$ is}&\mbox{ $\ffont{R}$-lexicographically not less than $(\wfont{A}_1,\ldots,\wfont{A}_k)$} \;\Rightarrow\;\\ &\ \wfont{A}_1n\ldots n\wfont{A}_k\; \ffont{R}\; \wfont{B}_1n\ldots n\wfont{B}_l.\end{aligned}$$ We consider two sequences  $(\wfont{A}_1,\ldots,\wfont{A}_k)$ and $(\wfont{B}_1,\ldots,\wfont{B}_l)$ as above such that the sequence $(\wfont{B}_1,\ldots,\wfont{B}_l)$ is $\ffont{R}$-lexicographically not less than $(\wfont{A}_1,\ldots,\wfont{A}_k)$  and show that $$\wfont{A}_1n\ldots n\wfont{A}_k\; \ffont{R}\; \wfont{B}_1n\ldots n\wfont{B}_l.$$ Clearly, for some $s$ from $0$ to $\min(r,l)$, the sequence $(\wfont{A}_1,\ldots,\wfont{A}_s,\wfont{B}_{s+1},\ldots,\wfont{B}_l)$ is the lexicographically maximal subsequence of $(\wfont{A}_1,\ldots,\wfont{A}_s,\wfont{A}_{s+1},\ldots,\wfont{A}_{k},\wfont{B}_{s+1},\ldots,\wfont{B}_l)$  and the words $\wfont{A}_1,\ldots,\wfont{A}_s$ are $\ffont{R}$-equivalent to the words $\wfont{B}_1,\ldots,\wfont{B}_s$, respectively. Obviously, for every $\wfont{C},\wfont{D}\in\word_{\omega}$, we have $\wfont{C} \; \ffont{R}\;\wfont{C}\wfont{D}$.  Thus, $$\wfont{A}_1n\ldots n\wfont{A}_k\; \ffont{R}\; \wfont{A}_1n\ldots n\wfont{A}_sn\wfont{B}_{s+1}n\ldots n\wfont{B}_l.$$ Therefore, because $ (\wfont{A}_1n\ldots n\wfont{A}_sn\wfont{B}_{s+1}n\ldots n\wfont{B}_l)^{\star}\equiv ( \wfont{B}_1n\ldots n\wfont{B}_l)^{\star}$, we have the required  $$\wfont{A}_1n\ldots n\wfont{A}_k\; \ffont{R}\; \wfont{B}_1n\ldots n\wfont{B}_l.$$ Because $\ffont{R}$ is a linear preorder, the induction hypothesis for $n$ and $m$ follows from the claim.\end{proof}

We define operators $a_0$, $a_1$,$\ldots$ on $\word_{\omega}$: $$a_n\colon \wfont{\wfont{A}}\longmapsto \wfont{A} n.$$ From Fact \ref{word_theory_correspondence} it follows that the structures $(\mathfrak{S}_\omega,\tfont{T}_0,<_{Con},\refl{0},\refl{1},\ldots)$ and $(\word_{\omega},\varLambda,\wor,a_0,a_1,\ldots)$ are isomorphic. 

\subsection{Properties of words comparison} In this subsection we prove some basic properties of $\wor$. Some of them were known before and were proved using the definition based on the Japaridze's provability logic. We prove these properties using our combinatorial definition.

In the proofs in the present subsection we need several technical notions. We consider monotonically increasing finite sequences of non-zero natural numbers; we call them {\it index collections}. We say that an index collection $(s_1,\ldots,s_m)$ is {\it $n$-bounded}, if every  $s_i\le n$. Every $n$-bounded index collection $(s_1,\ldots,s_m)$ corresponds to the subsequence $(\wfont{A}_{s_1},\ldots,\wfont{A}_{s_m})$ of a sequence $(\wfont{A}_1,\ldots,\wfont{A}_n)$; note that a subsequence of a sequence can corresponds to more than one index collection. We say that $n$-bounded index collection  is {\it maximal for $(\wfont{A}_1,\ldots,\wfont{A}_n)$}, if it corresponds to the lexicographically maximal subsequence of $(\wfont{A}_1,\ldots,\wfont{A}_n)$.

\begin{lemma} \label{maximal_lexicographic_algorithm} Suppose $(\wfont{A}_1,\ldots,\wfont{A}_n)$ is a sequence of words from $\word_{\omega}$. Then there exists the unique $n$-bounded index collection $(s_1,\ldots,s_m)$ that is maximal for $(\wfont{A}_1,\ldots,\wfont{A}_n)$.  $m,s_1,s_2,\ldots,s_m$ are determined by the following equations for $m,s_0,s_1,\ldots$:
\begin{enumerate}
\item \label{maximal_lexicographic_algorithm_eq1}  $s_0=0$;
\item \label{maximal_lexicographic_algorithm_eq2}  $s_{k+1}=\min\{f\in \omega \mid \forall l\in \omega (s_k<l\le n \to \wfont{A}_l\wore \wfont{A}_{f})\}$, if $s_k\ne n$;
\item \label{maximal_lexicographic_algorithm_eq3} $s_{k+1}=n$, if $s_k= n$;
\item \label{maximal_lexicographic_algorithm_eq4}  $m=\min\{f\ge 1 \mid s_f=n\}$.
\end{enumerate}
\end{lemma}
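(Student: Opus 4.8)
The plan is to prove the statement by induction on $n$, establishing in a single induction that the recurrence for $m,s_0,s_1,\ldots$ is well defined, that the $n$-bounded index collection $(s_1,\ldots,s_m)$ it returns is maximal for $(\wfont{A}_1,\ldots,\wfont{A}_n)$, and that no other $n$-bounded index collection is maximal. The case $n=0$ is vacuous, so assume $n\ge 1$. To see that the recurrence is well defined, note that since $\wore$ is a linear preorder on $\word_{\omega}$, whenever $s_k\ne n$ the nonempty set $\{\wfont{A}_l\mid s_k<l\le n\}$ has a $\wore$-greatest element, so the defining equation for $s_{k+1}$ picks out the least position in $(s_k,n]$ at which that maximum is attained, whence $s_k<s_{k+1}\le n$. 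Thus $0=s_0<s_1<s_2<\cdots$ increases strictly until it first equals $n$, so $m=\min\{f\ge 1\mid s_f=n\}$ exists, $(s_1,\ldots,s_m)$ is an $n$-bounded index collection, and $s_m=n$.

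For maximality, let $(\wfont{A}_{t_1},\ldots,\wfont{A}_{t_p})$ be an arbitrary (possibly empty) subsequence of $(\wfont{A}_1,\ldots,\wfont{A}_n)$. If $p=0$ the empty sequence is lexicographically not greater than the nonempty $(\wfont{A}_{s_1},\ldots,\wfont{A}_{s_m})$. If $p\ge 1$ then $\wfont{A}_{t_1}\wore\wfont{A}_{s_1}$, since $\wfont{A}_{s_1}$ is $\wore$-greatest among $\wfont{A}_1,\ldots,\wfont{A}_n$; when $\wfont{A}_{t_1}\wor\wfont{A}_{s_1}$ the comparison is settled at the first coordinate in favour of $(\wfont{A}_{s_1},\ldots,\wfont{A}_{s_m})$, and when $\wfont{A}_{t_1}\sim\wfont{A}_{s_1}$ then $t_1\ge s_1$ because $s_1$ is the \emph{least} position realizing the maximum, so $(\wfont{A}_{t_2},\ldots,\wfont{A}_{t_p})$ is a subsequence of $(\wfont{A}_{s_1+1},\ldots,\wfont{A}_n)$. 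After computing $s_1$ the recurrence continues exactly as the recurrence for the suffix $(\wfont{A}_{s_1+1},\ldots,\wfont{A}_n)$, so it returns $(s_2,\ldots,s_m)$ with all indices decreased by $s_1$; this suffix is shorter than $(\wfont{A}_1,\ldots,\wfont{A}_n)$, so by the induction hypothesis $(\wfont{A}_{s_2},\ldots,\wfont{A}_{s_m})$ is lexicographically not less than $(\wfont{A}_{t_2},\ldots,\wfont{A}_{t_p})$, and prepending the $\sim$-equal words $\wfont{A}_{s_1}\sim\wfont{A}_{t_1}$ yields the same for the full sequences. Hence $(s_1,\ldots,s_m)$ is maximal.

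For uniqueness, let $(u_1,\ldots,u_q)$ be any $n$-bounded index collection maximal for $(\wfont{A}_1,\ldots,\wfont{A}_n)$. Then $\wfont{A}_{u_1}$ is $\wore$-greatest among all $\wfont{A}_i$ --- otherwise the one-term subsequence at an occurrence of the maximum would be lexicographically greater than $(\wfont{A}_{u_1},\ldots,\wfont{A}_{u_q})$, already at the first coordinate --- hence $u_1\ge s_1$. Suppose $u_1>s_1$. Then $s_1<u_1<\cdots<u_q$, so $\sigma:=(\wfont{A}_{s_1},\wfont{A}_{u_1},\wfont{A}_{u_2},\ldots,\wfont{A}_{u_q})$ is a subsequence of $(\wfont{A}_1,\ldots,\wfont{A}_n)$ that is one term longer than $\tau:=(\wfont{A}_{u_1},\ldots,\wfont{A}_{u_q})$. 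By maximality of $\tau$, $\sigma$ is lexicographically not greater than $\tau$; as $\sigma$ is strictly longer, this must happen because $\sigma$ and $\tau$ agree up to $\sim$ on some initial segment and then $\sigma$ is strictly $\wor$-smaller at the next coordinate. But on any such initial segment the relevant coordinates of $\sigma$ are among $\wfont{A}_{s_1},\wfont{A}_{u_1},\wfont{A}_{u_2},\ldots$ and are all forced to be $\sim$-equivalent to the $\wore$-greatest word $\wfont{A}_{u_1}$, so $\sigma$ cannot be strictly $\wor$-smaller at the next coordinate --- a contradiction. Hence $u_1=s_1$. Finally, since prepending the fixed word $\wfont{A}_{s_1}$ is monotone for the lexicographic comparison, $(u_2,\ldots,u_q)$ is maximal for $(\wfont{A}_{s_1+1},\ldots,\wfont{A}_n)$, so by the induction hypothesis it equals the shift of $(s_2,\ldots,s_m)$; in particular $q=m$ and $u_i=s_i$ for all $i$.

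I expect the uniqueness part to be the main obstacle. Because $\wore$ is only a preorder, ``the lexicographically maximal subsequence'' is pinned down only up to $\sim$, so the substantive point is that the \emph{index collection} realizing it is nevertheless unique; making this precise forces one to reason about first occurrences of the running maxima and to use carefully the convention that a proper $\sim$-initial segment of a sequence is strictly smaller than the whole sequence. The well-definedness of the recurrence and the maximality of its output are, by contrast, a routine greedy argument.
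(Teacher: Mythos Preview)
Your argument is correct. Both you and the paper run a greedy argument, but the inductions are organized differently. The paper does induction on the output position $k$: it shows directly that any maximal index collection must have $(s_1,\ldots,s_{\min(k,m)})$ as a prefix, handling the two cases $\wfont{A}_h\wor\wfont{A}_{s_k}$ and $\wfont{A}_h\sim\wfont{A}_{s_k}$ (with $h>s_k$) by exhibiting a strictly larger subsequence; since $s_m=n$, this simultaneously gives uniqueness and, together with the trivial existence of some maximal collection among finitely many, identifies it as $(s_1,\ldots,s_m)$. You instead do induction on the input length $n$, split the work into well-definedness, maximality, and uniqueness, and reduce each to the suffix $(\wfont{A}_{s_1+1},\ldots,\wfont{A}_n)$ after fixing the first greedy choice. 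Your uniqueness step (prepending $\wfont{A}_{s_1}$ to a putative maximal collection with $u_1>s_1$ and deriving a contradiction from the resulting chain of $\sim$-equivalences) is essentially the paper's ``$h>s_k$'' case specialized to $k=1$ and then pushed through by the inductive reduction to the suffix. The payoff of the paper's organization is brevity: one induction handles everything at once. The payoff of yours is that maximality of the algorithm's output is proved explicitly rather than inferred from uniqueness plus finiteness, and the recursion-on-suffix structure makes the argument read like a verified greedy algorithm. Your closing diagnosis is accurate: the only nontrivial part is uniqueness at the level of index collections (not merely up to $\sim$), and you handle it correctly.
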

\sprv Suppose the numbers $m,s_0,s_1,\ldots$ are given by the equations \ref{maximal_lexicographic_algorithm_eq1}, \ref{maximal_lexicographic_algorithm_eq2}, \ref{maximal_lexicographic_algorithm_eq3}, and \ref{maximal_lexicographic_algorithm_eq4}. Note that $(s_1,\ldots,s_m)$  is an $n$-bounded index collection.

By induction on $k$ we show that the only possible first $\min(k,m)$ indexes of an $n$-bounded index collection that is maximal for $(\wfont{A}_1,\ldots,\wfont{A}_n)$ are $s_1,\ldots,s_{\min(k,m)}$. The induction basis ($k=0$) and the induction step  in the case of  $k>m$ obviously holds. Suppose the induction hypothesis holds for $k-1$. We claim that, for an index $h$ from $s_{k-1}+1$ to $n$ such that $h\ne s_k$, the  index collection $(s_1,\ldots,s_{k-1},h)$ is not a prefix of some $n$-bounded index collection that is maximal for $(\wfont{A}_1,\ldots,\wfont{A}_n)$; clearly, the induction hypothesis for $k$ follows from the claim. In the case of $\wfont{A}_h\nsim \wfont{A}_{s_k}$, we have $\wfont{A}_h\wor \wfont{A}_{s_k}$, hence the sequence $(\wfont{A}_{s_1},\ldots,\wfont{A}_{s_k})$ is lexicographically greater than any sequence with a prefix that is equal to $(\wfont{A}_{s_1},\ldots,\wfont{A}_{s_{k-1}},\wfont{A}_h)$; therefore, in this case, the claim holds. Let us consider the case of $\wfont{A}_h\sim \wfont{A}_{s_k}$. Obviously, $h> s_k$. Hence, for every $n$-bounded index collection $(s_1,\ldots,s_{k-1},h,u_1,\ldots ,u_l)$, the corresponding subsequence is lexicographically less than the subseqence that corresponds to the index collection $(s_1,\ldots,s_k,h,u_1,\ldots ,u_l)$. Thus $(s_1,\ldots,s_{k-1},h)$ is not a prefix of an $n$-bounded index collection that is maximal for $(\wfont{A}_1,\ldots,\wfont{A}_n)$.\eprv

\begin{lemma} \label{concatenation_lemma} Suppose $(\wfont{A}_1,\ldots,\wfont{A}_n)$ and $(\wfont{B}_1,\ldots,\wfont{B}_m)$ are  non-empty word sequences and maximal index collections for them are an  $n$-bounded index collection $(g_1,\ldots,g_r)$ and an $m$-bounded index collection $(h_1,\ldots,h_t)$, respectively. Then the $(n+m)$-bounded index collection $(g_1,\ldots,g_k,n+h_1,\ldots,n+h_t)$ is maximal for the sequence  $(\wfont{A}_1,\ldots,\wfont{A}_n,\wfont{B}_1,\ldots,\wfont{B}_m)$, where $k=\max(\{0\}\cup\{i\mid 1\le i\le r, \wfont{B}_{h_1}\wore \wfont{A}_{g_i}\})$.
\end{lemma}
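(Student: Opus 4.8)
The plan is to appeal to the characterization of the maximal index collection given by Lemma \ref{maximal_lexicographic_algorithm} and verify that the proposed collection $(g_1,\ldots,g_k,n+h_1,\ldots,n+h_t)$ satisfies the recursive equations \ref{maximal_lexicographic_algorithm_eq1}--\ref{maximal_lexicographic_algorithm_eq4} for the concatenated sequence $(\wfont{A}_1,\ldots,\wfont{A}_n,\wfont{B}_1,\ldots,\wfont{B}_m)$; by the uniqueness clause of that lemma this suffices. Write $\wfont{C}_1,\ldots,\wfont{C}_{n+m}$ for the concatenation, so $\wfont{C}_i=\wfont{A}_i$ for $i\le n$ and $\wfont{C}_{n+j}=\wfont{B}_j$ for $1\le j\le m$. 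Let $s_0,s_1,\ldots$ be the sequence produced by the greedy recursion of Lemma \ref{maximal_lexicographic_algorithm} applied to $(\wfont{C}_1,\ldots,\wfont{C}_{n+m})$.

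First I would show that the recursion restricted to indices $\le n$ reproduces exactly the prefix $g_1,\ldots,g_k$. For $1\le i\le k$, the step defining $s_{i}$ from $s_{i-1}=g_{i-1}$ takes the minimal $f$ with $\wfont{C}_l\wore\wfont{C}_f$ for all $l$ with $g_{i-1}<l\le n+m$; since $\wfont{B}_{h_1}\wore\wfont{A}_{g_i}$ and $\wfont{A}_{g_i}$ dominates $\wfont{A}_l$ for $g_{i-1}<l\le n$ (as $g_1,\ldots,g_r$ is maximal for the $\wfont{A}$-sequence, invoking again Lemma \ref{maximal_lexicographic_algorithm}), while $\wfont{B}_{h_1}$ dominates every $\wfont{B}_j$, the word $\wfont{A}_{g_i}$ dominates all of $\wfont{C}_{g_{i-1}+1},\ldots,\wfont{C}_{n+m}$; minimality of $g_i$ among the original indices then gives $s_i=g_i$. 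At the index $k$ we have $s_k=g_k$ and, by the definition of $k$, either $g_k=g_r$ is the last $\wfont{A}$-index with $\wfont{B}_{h_1}\wore\wfont{A}_{g_k}$, or $k=0$ and no $\wfont{A}_{g_i}$ dominates $\wfont{B}_{h_1}$. In either case the greedy choice at the next step must jump past all remaining $\wfont{A}$-indices: the minimal $f$ with $\wfont{C}_l\wore\wfont{C}_f$ for $g_k<l\le n+m$ cannot be any index $\le n$, because each such $\wfont{C}_f=\wfont{A}_f$ fails $\wfont{B}_{h_1}\wore\wfont{A}_f$ by choice of $k$ (and by maximality of the $\wfont{A}$-collection, a non-$g$ index is strictly dominated by some $\wfont{A}_{g_j}$ with $j>k$, which is in turn $\wore\wfont{B}_{h_1}$); hence $s_{k+1}=n+h_1$.

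Next I would run the same argument on the $\wfont{B}$-block: for $1\le j\le t$, assuming $s_{k+j}=n+h_j$, the greedy step picks the minimal $f>n+h_j$ with $\wfont{C}_l\wore\wfont{C}_f$ for all $l$ in range, and since all such $l$ now exceed $n$ this is precisely the recursion of Lemma \ref{maximal_lexicographic_algorithm} for $(\wfont{B}_1,\ldots,\wfont{B}_m)$ shifted by $n$, giving $s_{k+j+1}=n+h_{j+1}$; after exhausting the $h$'s we reach $s_{k+t}=n+h_t=n+m$, so the termination equation \ref{maximal_lexicographic_algorithm_eq4} yields $m_{\mathrm{total}}=k+t$ and the maximal index collection is exactly $(g_1,\ldots,g_k,n+h_1,\ldots,n+h_t)$.

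The main obstacle I anticipate is the case analysis at the junction index $k$: one must argue carefully that the greedy recursion, having reached $g_k$, skips every remaining index in the $\wfont{A}$-block in a single step and lands on $n+h_1$. This requires combining the maximality of $(g_1,\ldots,g_r)$ for the $\wfont{A}$-sequence — which tells us that any $\wfont{A}$-index strictly between $g_k$ and $n$ is strictly dominated by some later $\wfont{A}_{g_j}$ — with the defining inequality $\wfont{B}_{h_1}\wore\wfont{A}_{g_i}$ for $i\le k$ and its failure for $i>k$, and with the maximality of $(h_1,\ldots,h_t)$ for the $\wfont{B}$-sequence. The degenerate subcase $k=0$ (no prefix of $\wfont{A}$-indices survives) and the subcase $g_k=g_r$ (all of them survive) should be handled explicitly but are routine once the domination bookkeeping is set up. Transitivity and linearity of $\wore$ on the relevant finite sets, already established before Fact \ref{word_theory_correspondence}, are used throughout without further comment.
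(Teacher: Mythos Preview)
Your proposal is correct and follows essentially the same approach as the paper: both verify that the greedy recursion of Lemma~\ref{maximal_lexicographic_algorithm}, run on the concatenated sequence $(\wfont{C}_1,\ldots,\wfont{C}_{n+m})$, yields exactly $g_1,\ldots,g_k,n+h_1,\ldots,n+h_t$, handling the $\wfont{A}$-prefix, the junction step, and the shifted $\wfont{B}$-recursion in turn. The paper separates the case $k=r$ (where $g_k=g_r=n$, so there are no remaining $\wfont{A}$-indices to skip) a bit more explicitly, but otherwise the arguments coincide.
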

\sprv We put $\wfont{C}_1=\wfont{A}_1,\ldots,\wfont{C}_n=\wfont{A}_n,\wfont{C}_{n+1}=\wfont{B}_1,\ldots,\wfont{C}_{n+m}=\wfont{B}_m$. We consider the sequence $s_i$ that is given by equations from Lemma \ref{maximal_lexicographic_algorithm} for the sequence $(\wfont{C}_1,\ldots,\wfont{C}_{n+m})$. Let us prove that $s_1=g_1,\ldots,s_k=g_k,s_{k+1}=n+h_1,\ldots,s_{k+t}=n+h_{k+t}$; clearly, the later is equivalent to the lemma.

 We put $g_0=0$. Hence, for $i$ from $1$ to $k$, we have $g_i=\min\{f\in \omega \mid \forall l\in \omega (g_{i-1}<l\le n \to \wfont{A}_l\wore \wfont{A}_{f}\}$. From Lemma \ref{maximal_lexicographic_algorithm} it follows that $\wfont{B}_{h_1}$ is the maximal element of the sequence $(\wfont{B}_1,\ldots,\wfont{B}_m)$. Thus, for $i$ from  $1$ to $k$, we have $\forall l \in \{n+1,\ldots,n+m\}(\wfont{C}_l\wore \wfont{C}_{g_i})$. Therefore, for $i$ from $1$ to $k$, we have $s_i=\min\{f\in \omega \mid \forall l\in \omega (g_{i-1}<l\le n \to \wfont{C}_l\wore \wfont{C}_{f})\}=g_i$. 

Note that if $k=r$, then $s_k=g_k=n$, hence the required straightforward follows from Lemma \ref{maximal_lexicographic_algorithm}. Now we consider the case of $k<r$. For  $i$ from   $s_k+1$ to $n$,  we have $\wfont{C}_i\wore \wfont{C}_{s_{k+1}}$. From the definition of $k$ it follows that  $\wfont{C}_{s_{k+1}}\wor \wfont{C}_{n+h_1}$. Thus, for every $i$ from $s_{k+1}+1$ to $n$, by transitivity of $\wore$, we have $\wfont{C}_i\wore \wfont{C}_{n+h_1}$. Therefore $s_{k+1}=n+h_1$. From Lemma \ref{maximal_lexicographic_algorithm} it follows that, for all $i$ from $1$ to $t$, we have $s_{k+i}=n+h_i$. It completes the proof of the lemma.\eprv

\begin{lemma} \label{congruency_lemma} Suppose $k\in \omega$, $\wfont{A},\wfont{B}\in \word_{\omega}$ and $\wfont{A}\sim \wfont{B}$. Then $\wfont{A} k\sim \wfont{B} k$.
\end{lemma}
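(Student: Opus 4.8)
The plan is to prove the (symmetric) statement that $\wfont{A}\sim\wfont{B}$ implies $\wfont{A}k\sim\wfont{B}k$ by induction on $|\wfont{A}|+|\wfont{B}|$ (for all $k$ simultaneously), reading both comparisons off the block decompositions via Remark~\ref{wore_characterization}. I would first settle the degenerate case: by the easy fact (induction on length) that $\varLambda$ is the $\wore$-least word, $\wfont{A}\sim\wfont{B}$ with $\wfont{A}=\varLambda$ forces $\wfont{B}=\varLambda$, and then $\wfont{A}k=k=\wfont{B}k$. So let $\wfont{A},\wfont{B}$ be non-empty and let $n$ be the least symbol occurring in $\wfont{A}$ or $\wfont{B}$; the cases $k\le n$ and $k>n$ behave differently and must be handled separately. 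Throughout I would also use the elementary fact (again induction on length) that $\wfont{C}\wor\wfont{C}k$ for every word $\wfont{C}$.

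\emph{Case $k\le n$.} Then $k$ is the least symbol of both $\wfont{A}k$ and $\wfont{B}k$, and appending $k$ just adds a trailing empty block: the sequence of $k$-blocks of $\wfont{A}k$ is that of $\wfont{A}$ followed by $\varLambda$ (likewise for $\wfont{B}$). By Lemma~\ref{concatenation_lemma} (with the one-element sequence $(\varLambda)$, using that $\varLambda$ is least) the lexicographically maximal subsequence of the $k$-blocks of $\wfont{A}k$ is the maximal subsequence of the $k$-blocks of $\wfont{A}$ with $\varLambda$ appended, and similarly for $\wfont{B}$. Since $\wfont{A}\sim\wfont{B}$, Remark~\ref{wore_characterization} at level $k$ gives that the maximal subsequences of the $k$-blocks of $\wfont{A}$ and of $\wfont{B}$ are $\sim$-equal entrywise and of the same length; appending $\varLambda$ to both preserves this, so Remark~\ref{wore_characterization} applied to $\wfont{A}k$, $\wfont{B}k$ yields $\wfont{A}k\sim\wfont{B}k$.

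\emph{Case $k>n$.} Here $n$ remains the comparison symbol for $\wfont{A}k$ versus $\wfont{B}k$, and since $k\ne n$ appending $k$ changes only the last $n$-block: if $\wfont{A}_1,\dots,\wfont{A}_p$ and $\wfont{B}_1,\dots,\wfont{B}_q$ are the $n$-blocks of $\wfont{A}$ and $\wfont{B}$, then $\wfont{A}k$ has $n$-blocks $\wfont{A}_1,\dots,\wfont{A}_{p-1},\wfont{A}_pk$ and $\wfont{B}k$ has $\wfont{B}_1,\dots,\wfont{B}_{q-1},\wfont{B}_qk$ (each still in $\sfont{S}_{n+1}$). As $n$ occurs in $\wfont{A}$ or in $\wfont{B}$, at least one of $p,q$ is $\ge 2$; after possibly swapping $\wfont{A}$ and $\wfont{B}$, assume $p\ge 2$, so $|\wfont{A}_p|<|\wfont{A}|$ and $|\wfont{B}_q|\le|\wfont{B}|$. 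By Lemma~\ref{maximal_lexicographic_algorithm} the last entry of the maximal subsequence of a block sequence is its last block, so $\wfont{A}\sim\wfont{B}$ and Remark~\ref{wore_characterization} give $\wfont{A}_p\sim\wfont{B}_q$, whence by the induction hypothesis $\wfont{A}_pk\sim\wfont{B}_qk$. It remains to compare the maximal subsequences of $(\wfont{A}_1,\dots,\wfont{A}_{p-1},\wfont{A}_pk)$ and $(\wfont{B}_1,\dots,\wfont{B}_{q-1},\wfont{B}_qk)$. By Lemma~\ref{maximal_lexicographic_algorithm} the maximal subsequence of $(\wfont{A}_1,\dots,\wfont{A}_{p-1})$ is $\wore$-nonincreasing; applying Lemma~\ref{concatenation_lemma} with the one-element sequence $(\wfont{A}_p)$, respectively $(\wfont{A}_pk)$, its maximal subsequence equals the initial run of the maximal subsequence of $(\wfont{A}_1,\dots,\wfont{A}_{p-1})$ consisting of blocks $\wfont{X}$ with $\wfont{A}_p\wore\wfont{X}$, followed by $\wfont{A}_p$, respectively the analogous expression with $\wfont{A}_pk$; since $\wfont{A}_p\wor\wfont{A}_pk$, the run for $\wfont{A}_pk$ is an initial segment of the run for $\wfont{A}_p$, and likewise on the $\wfont{B}$-side. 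From $\wfont{A}\sim\wfont{B}$ and Remark~\ref{wore_characterization} the run for $\wfont{A}_p$ matches the run for $\wfont{B}_q$ entrywise under $\sim$ and in length; and since $\wfont{A}_pk\sim\wfont{B}_qk$, a block $\wfont{X}$ of the first run satisfies $\wfont{A}_pk\wore\wfont{X}$ exactly when its $\sim$-partner $\wfont{Y}$ satisfies $\wfont{B}_qk\wore\wfont{Y}$, so the (shorter) run for $\wfont{A}_pk$ matches the run for $\wfont{B}_qk$. Appending $\wfont{A}_pk$ and $\wfont{B}_qk$, the maximal subsequences of the $n$-blocks of $\wfont{A}k$ and $\wfont{B}k$ are $\sim$-equal entrywise and of equal length, so $\wfont{A}k\sim\wfont{B}k$ by Remark~\ref{wore_characterization}.

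The main obstacle is precisely this last bookkeeping step: after replacing $\wfont{A}_p$ by the $\wore$-possibly-much-larger block $\wfont{A}_pk$, one must verify that the preceding run of dominated blocks is truncated at corresponding points on the $\wfont{A}$- and $\wfont{B}$-sides. The two ingredients that make this go through — $\varLambda$ being $\wore$-least (so a trailing $\varLambda$ is never truncated away in the case $k\le n$, and Lemma~\ref{concatenation_lemma} does not lose any blocks there) and $\wfont{C}\wor\wfont{C}k$ (so the truncation for $\wfont{A}_pk$ never exceeds that for $\wfont{A}_p$) — are straightforward inductions on word length that can be proved first or merged into the same induction.
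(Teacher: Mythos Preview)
Your proof is correct and follows essentially the same route as the paper: induction on $|\wfont{A}|+|\wfont{B}|$, a case split on the position of $k$ relative to the least symbol $n$, and in the case $k>n$ an application of the induction hypothesis to the last $n$-blocks followed by a computation of how the maximal subsequence changes when the last block is enlarged. The paper splits into three cases $k<n$, $k=n$, $k>n$ and in the last case invokes Lemma~\ref{maximal_lexicographic_algorithm} directly rather than Lemma~\ref{concatenation_lemma}, but the bookkeeping is the same; one small wrinkle in your write-up is that Lemma~\ref{concatenation_lemma} as stated requires both sequences to be non-empty, so on the $\wfont{B}$-side with $q=1$ you should simply note that the maximal subsequence of a singleton is itself rather than appeal to that lemma.
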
 
\sprv We prove the lemma for all $\wfont{A},\wfont{B}\in \word_{\omega}$ by induction on the length of $\wfont{A}\wfont{B}$. Induction basis  obviously holds, i.e. the case of $\wfont{A}\wfont{B}=\varLambda$. Let us prove the induction step. Suppose the minimal symbol of $\wfont{A}\wfont{B}$ is $n$. Thus if $k<n$, then the comparison of  $\wfont{A} k$ and  $\wfont{B} k$ can be reduced to the lexicographical compare of sequences $(\wfont{A},\varLambda)$ and $(\wfont{B},\varLambda)$; the last two sequences, obviously, are lexicographically equivalent. Further we assume that $k\ge n$. 

We consider the only $q,l\in \omega$, $\wfont{A}_1,\ldots,\wfont{A}_q\in \sfont{S}_{n+1}$, and $\wfont{B}_1,\ldots,\wfont{B}_l\in \sfont{S}_{n+1}$ such that $\wfont{A}=\wfont{A}_1n\wfont{A}_2n\ldots n\wfont{A}_q$ and $\wfont{B}=\wfont{B}_1n\wfont{B}_2n\ldots n\wfont{B}_l$. Suppose $(s_1,\ldots,s_r)$ and $(h_1,\ldots,h_t)$ are maximal index collections for $(\wfont{A}_1,\ldots,\wfont{A}_l)$ and $(\wfont{B}_1,\ldots,\wfont{B}_q)$, respectively. Note that from $\wfont{A}\sim \wfont{B}$ it follows that we have $r=t$ and $\wfont{A}_{s_i}\sim \wfont{B}_{h_i}$ for all $i$ from $1$ to $r$ 

We consider the case of $k=n$. In order to compare $\wfont{A} k$ and $\wfont{B} k$ we need to compare lexicographically maximal subsequences  of sequences  $(\wfont{A}_1,\ldots,\wfont{A}_f,\varLambda)$ and $(\wfont{B}_1,\ldots,\wfont{B}_g,\varLambda)$. From Lemma \ref{concatenation_lemma} it follows that this sequences are equal to $(\wfont{A}_{s_1},\ldots,\wfont{A}_{s_r},\varLambda)$ and $(\wfont{B}_1,\ldots,\wfont{B}_{h_t},\varLambda)$, respectively. Thus from equivalency of the words $\wfont{A}$ and $\wfont{B}$ it follows that $\wfont{A} k$ and $\wfont{B} k$ are equivalent.

 Now we consider the case of $k>n$. Note that $s_r=f$ and $h_t=g$. Hence $\wfont{A}_f\sim \wfont{B}_g$. Therefore from the induction hypothesis it follows that $\wfont{A}_f k\sim \wfont{B}_g k$. We consider the index collections $(s^{'}_1,\ldots,s^{'}_{r^{'}})$ and $(h^{'}_1,\ldots,h^{'}_{t^{'}})$ that are maximal for sequences $(\wfont{A}_1,\ldots,\wfont{A}_{f-1},\wfont{A}_f k)$ and $(\wfont{B}_1,\ldots,\wfont{B}_{g-1},\wfont{B}_g k)$, respectively. From Lemma \ref{maximal_lexicographic_algorithm} it follows that $r^{'}=\max(\{0\}\cup \{i\in \{1,\ldots,r\} \mid \wfont{A}_f k \wore \wfont{A}_{s_i}\})+1$, $s_i=s^{'}_i$, for $i$ from $1$ to $r^{'}-1$, and $s_{r^{'}}=f$. Similarly, $t^{'}=\max(\{0\}\cup \{i\in \{1,\ldots,t\} \mid \wfont{B}_g k \wore \wfont{B}_{h_i}\})+1$, $h_i=h^{'}_i$, for $i$ from $1$ to $t^{'}-1$, and $h_{t^{'}}=g$. Therefore $t^{'}=r^{'}$, $$\wfont{A}_{s_i}= \wfont{A}_{s_i'}\sim\wfont{B}_{h_i'}=\wfont{B}_{h_i}\mbox{, for $i\in \{1,\ldots,r^{'}-1\}$},$$ and $\wfont{A}_{s^{'}_{r^{'}}}=\wfont{B}_{h^{'}_{r^{'}}}$. Thus $\wfont{A} k\sim \wfont{B} k$.\eprv

We define the set of all words in normal form $\NF$. We define the property ``$\wfont{A}$ is an element of $\NF$'' by induction on the length of $\wfont{A}$:
\begin{itemize}
\item $\varLambda\in \NF$;
\item suppose $n$ is a number, $k\ge 2$, and $\wfont{A}_1,\ldots,\wfont{A}_k\in \sfont{S}_{n+1}$, then $\wfont{A}_1n\ldots n \wfont{A}_{k}\in \NF$ iff $\wfont{A}_k  \wore \wfont{A}_{k-1} \wore \ldots \wore \wfont{A}_1$ and $\wfont{A}_1,\ldots,\wfont{A}_k\in \NF$.
\end{itemize}

By trivial induction on the length of a word $\wfont{A}$, we prove that there exists the unique  $\wfont{B}\in\NF$ such that $|\wfont{B}|\le|\wfont{A}|$ and $\wfont{A}\sim\wfont{B}$. Therefore, for every $\wfont{A}\in \word_{\omega}$, there exists the unique $\wfont{B}\in \NF$ such that $\wfont{A}\sim\wfont{B}$.

We introduce operators $\romb{n}$ on the set  $\NF$. For every $\wfont{A}\in \NF$ we consider $\wfont{B}$ such that $\wfont{B}\sim \wfont{A} n$ and $\wfont{B}\in \NF$; we put $\romb{n}\wfont{A}=\wfont{B}$.

Note that the restriction of $\wore$ to $\NF$ is a  non-strict linear order, and the restriction of  $\wor$ to  $\NF$ is a strict linear order. 

For every $\alpha \in [0,\omega]$, we denote by $\word^N_{\alpha}$ the set $\word_{\alpha}\cap\NF$.

From Lemma \ref{congruency_lemma} it follows that

\begin{proposition}\label{a_diamond_correspondence} For every $n\in \omega$ and $\wfont{A}_1,\wfont{A}_2\in \word_\omega$ such that $\wfont{A}_1\sim \wfont{A}_2$, we have $a_n(\wfont{A}_1)\sim a_n(\wfont{A}_2)$. Moreover, for all  $n\in\omega$, $\wfont{A}_1\in \word_\omega^N$ and $\wfont{A}_2\in \word_\omega$ such that $\wfont{A}_1\sim\wfont{A}_2$, we have $\romb{n}\wfont{A}_1 \sim a_n(\wfont{A}_2)$
\end{proposition}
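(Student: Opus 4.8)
The plan is to read off both assertions from Lemma~\ref{congruency_lemma} together with the fact that $\sim$, being defined from the linear preorder $\wore$ via $\wfont{A}\sim\wfont{B}\defiff\wfont{A}\wore\wfont{B}\ \&\ \wfont{B}\wore\wfont{A}$, is an equivalence relation (in particular symmetric and transitive, since $\wore$ is transitive).

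For the first assertion, I would simply note that $a_n(\wfont{A}_1)=\wfont{A}_1 n$ and $a_n(\wfont{A}_2)=\wfont{A}_2 n$ by the definition of the operators $a_n$, so $a_n(\wfont{A}_1)\sim a_n(\wfont{A}_2)$ is precisely the statement of Lemma~\ref{congruency_lemma} with $k$ taken to be $n$.

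For the second assertion, recall that $\romb{n}$ was defined on $\NF$ by letting $\romb{n}\wfont{A}_1$ be the unique element $\wfont{B}$ of $\NF$ with $\wfont{B}\sim\wfont{A}_1 n$; in particular $\romb{n}\wfont{A}_1\sim\wfont{A}_1 n$. From $\wfont{A}_1\sim\wfont{A}_2$ and Lemma~\ref{congruency_lemma} we obtain $\wfont{A}_1 n\sim\wfont{A}_2 n$, and since $a_n(\wfont{A}_2)=\wfont{A}_2 n$, transitivity of $\sim$ gives $\romb{n}\wfont{A}_1\sim a_n(\wfont{A}_2)$, as required.

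There is essentially no obstacle here: all the combinatorial content is already packaged inside Lemma~\ref{congruency_lemma}, and the only points to be careful about are the bookkeeping facts that $\sim$ is symmetric and transitive and that $\romb{n}$ is well defined on $\NF$, the latter having been established immediately before the statement via the uniqueness of normal forms.
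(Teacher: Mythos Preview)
Your proof is correct and matches the paper's own approach exactly: the paper does not spell out a separate argument but simply states that the proposition follows from Lemma~\ref{congruency_lemma}, which is precisely what you have unpacked.
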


From Proposition \ref{a_diamond_correspondence} it follows that the structures $(\word_{\alpha}/{\sim},\wor,\langle a_i\mid i\in\omega, i\le \alpha\rangle)$ and  $(\word_{\alpha}^N,\wor,\langle \romb{i}\mid i\in\omega, i\le \alpha\rangle)$ are isomorphic, for all $\alpha\in[0,\omega]$.

\begin{lemma}\label{comparission_lemma} Suppose $\wfont{A},\wfont{B},\wfont{C},\wfont{D} \in \word_{\omega}$ are such that the length of $\wfont{A}$ is equal to the length of $\wfont{B}$ and, for all symbols $c_1$ and $c_2$ that lies in positions with the same indexes in $\wfont{A}$ and $\wfont{B}$, respectively, we have $c_1\le c_2$. Then $\wfont{A}\wore \wfont{D}\wfont{B}\wfont{C}$. Moreover, if either $\wfont{C}\ne \varLambda$ or the last symbols of $\wfont{A}$ and $\wfont{B}$ are different, then $\wfont{A}\wor \wfont{D}\wfont{B}\wfont{C}$.
\end{lemma}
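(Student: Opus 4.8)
The plan is to prove both assertions simultaneously by induction on the length of $\wfont{D}\wfont{B}\wfont{C}$ (equivalently on $|\wfont{A}|+|\wfont{D}|+|\wfont{C}|$), using the recursive definition of $\wore$ as codified in Remark \ref{wore_characterization} and the combinatorial handle on maximal subsequences from Lemma \ref{maximal_lexicographic_algorithm} and Lemma \ref{concatenation_lemma}. First I would dispose of the trivial base case $\wfont{A}=\wfont{B}=\varLambda$: then the right-hand side is $\wfont{D}\wfont{C}$, which is $\wore \varLambda$ trivially, with strictness exactly when $\wfont{D}\wfont{C}\ne\varLambda$, i.e. when $\wfont{C}\ne\varLambda$ (the "different last symbols" clause being vacuous here). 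For the inductive step, let $n$ be the minimal symbol occurring in $\wfont{D}\wfont{B}\wfont{C}$; note $n$ is $\le$ the minimal symbol of $\wfont{A}$ since each symbol of $\wfont{B}$ dominates the corresponding symbol of $\wfont{A}$ and $\wfont{A}$, $\wfont{B}$ have equal length (so in particular $\wfont{A}\ne\varLambda$ forces $\wfont{B}\ne\varLambda$).

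The heart of the argument is to decompose both words at the symbol $n$. Write $\wfont{D}\wfont{B}\wfont{C}=\wfont{E}_1 n \wfont{E}_2 n\ldots n\wfont{E}_p$ with all $\wfont{E}_j\in\sfont{S}_{n+1}$, and correspondingly split $\wfont{A}$ at those positions where its symbols lie above positions of $\wfont{B}$ carrying the letter $n$: since every symbol of $\wfont{A}$ is $\ge$ the corresponding symbol of $\wfont{B}$ and $\ge n$ anyway, $\wfont{A}$ restricted to the block of coordinates matched with a given $\wfont{E}_j$-block of $\wfont{B}$ is a word $\wfont{A}_j\in\sfont{S}_n$ whose length equals $|\wfont{E}_j^{\,\mathrm{B}}|$, where $\wfont{E}_j^{\,\mathrm{B}}$ denotes the portion of $\wfont{E}_j$ coming from $\wfont{B}$. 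Each such $\wfont{A}_j$ is either already in $\sfont{S}_{n+1}$ (if it contains no $n$) or splits further at its own $n$'s into pieces dominated (position by position, by the induction hypothesis, since those pieces are strictly shorter) by the matching subpieces of $\wfont{E}_j^{\,\mathrm B}\preccurlyeq\wfont{E}_j$. By Remark \ref{wore_characterization} and Lemma \ref{concatenation_lemma}, comparing $\wfont{A}$ with $\wfont{D}\wfont{B}\wfont{C}$ reduces to the $\ffont{R}$-/$\wore$-lexicographic comparison of the maximal subsequence of the $\wfont{A}$-blocks against that of the $\wfont{E}_j$-blocks. The induction hypothesis applied blockwise — each $\wfont{A}$-block is $\wore$ the corresponding (longer, $n$-prefixed-and-suffixed) $\wfont{E}$-block, strictly so when a genuine $\wfont{D}$- or $\wfont{C}$-part is present or a last-symbol discrepancy survives — feeds into the monotonicity of lexicographic comparison under blockwise $\wore$ to yield $\wfont{A}\wore\wfont{D}\wfont{B}\wfont{C}$, with the strict version under the stated side condition.

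The main obstacle I anticipate is bookkeeping the split of $\wfont{A}$ consistently with the split of $\wfont{D}\wfont{B}\wfont{C}$ at the symbol $n$: the blocks $\wfont{E}_j$ of $\wfont{D}\wfont{B}\wfont{C}$ do not respect the $\wfont{D}\mid\wfont{B}\mid\wfont{C}$ boundaries, so one must carefully track which coordinates of $\wfont{A}$ are "aligned" with coordinates of $\wfont{B}$ versus which correspond to inserted letters from $\wfont{D}$ or $\wfont{C}$, and argue that the leftover $\wfont{D}$- and $\wfont{C}$-material only ever makes the right-hand side lexicographically larger (never smaller). The cleanest way to manage this is to first reduce to the case $\wfont{D}=\varLambda$ by observing $\wfont{D}\wfont{B}\wfont{C}\succcurlyeq\wfont{B}\wfont{C}$ via an easy prefix argument (every word is $\wore$ any word obtained by prepending a block), and then handle the suffix $\wfont{C}$ and the last-symbol clause together by a short separate sub-induction on $|\wfont{C}|$, peeling one symbol of $\wfont{C}$ at a time while invoking Lemma \ref{congruency_lemma} and the definition of $\wore$. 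Once the structural alignment is set up correctly, each individual comparison is immediate from the induction hypothesis, so the only real work is in choosing the induction parameter and the decomposition so that every sub-comparison is on strictly shorter data.
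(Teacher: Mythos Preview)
There is a genuine gap, and it sits right at the start of your inductive step: you take $n$ to be the minimal symbol of $\wfont{D}\wfont{B}\wfont{C}$ and then assert that $n$ is $\le$ the minimal symbol of $\wfont{A}$. The hypothesis goes the other way: the symbols of $\wfont{A}$ are pointwise $\le$ the corresponding symbols of $\wfont{B}$, so $\min(\wfont{A})\le\min(\wfont{B})$, and hence nothing prevents $\min(\wfont{A})<n$. Take $\wfont{A}=0$, $\wfont{B}=5$, $\wfont{D}=\wfont{C}=\varLambda$: then $n=5$ while $\min(\wfont{A})=0$. Consequently your ``$\wfont{A}$ restricted to the block of coordinates matched with a given $\wfont{E}_j$-block of $\wfont{B}$ is a word $\wfont{A}_j\in\sfont{S}_n$'' is false in general, and the whole block-by-block invocation of the induction hypothesis breaks down: the $\wfont{A}$-pieces need not live in $\sfont{S}_{n+1}$, so Remark~\ref{wore_characterization} does not apply to them. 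The paper avoids this by taking $n$ to be the minimal symbol of $\wfont{D}\wfont{A}\wfont{C}$ (equivalently, of the whole word $\wfont{A}\wfont{D}\wfont{B}\wfont{C}$), decomposing $\wfont{A}$ at $n$, passing to the maximal index collection $(s_1,\ldots,s_l)$ of the $\wfont{A}$-blocks, and then locating, for each $\wfont{A}_{s_i}$, the block of the $n$-decomposition of $\wfont{D}\wfont{B}\wfont{C}$ that contains the aligned subword of $\wfont{B}$. Because the $n$'s separating the $\wfont{A}_{s_i}$ in $\wfont{A}$ correspond to symbols $\ge n$ in $\wfont{B}$, these target blocks are of the form $\wfont{H}_i\wfont{F}_i$ (possibly with a further tail $\wfont{I}$ at the last one), to which the induction hypothesis applies with $\wfont{A}_{s_i}$ in the role of $\wfont{A}$ and $\wfont{F}_i$ in the role of $\wfont{B}$.

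Your proposed fallback (``reduce to $\wfont{D}=\varLambda$ by an easy prefix argument, then peel $\wfont{C}$ one symbol at a time using Lemma~\ref{congruency_lemma}'') is not actually a shortcut. The statement $\wfont{X}\wore\wfont{D}\wfont{X}$ is precisely the special case $\wfont{A}=\wfont{B}=\wfont{X}$, $\wfont{C}=\varLambda$ of the lemma you are trying to prove, and $\wfont{X}\wore\wfont{X}\wfont{C}$ is the special case $\wfont{A}=\wfont{B}=\wfont{X}$, $\wfont{D}=\varLambda$; neither is available beforehand, and Lemma~\ref{congruency_lemma} only tells you that $\sim$ is a congruence for appending a single symbol, not that appending strictly increases. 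So both ``easy'' steps need the same inductive machinery as the full lemma. The fix is not more reductions but the correct choice of $n$: once you decompose $\wfont{A}$ (rather than $\wfont{D}\wfont{B}\wfont{C}$) at the global minimum symbol, the alignment and the blockwise induction go through essentially as you outlined, with the bookkeeping the paper carries out via the indices $u$, $g_1,\ldots,g_u$ and the case split on whether $u<l$ or $u=l$.
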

\sprv We prove the lemma by induction on the length of $\wfont{D}\wfont{B}\wfont{C}$. The induction basis, i.e. the case of $|\wfont{D}\wfont{B}\wfont{C}|=0$, is trivial. Now we prove the induction step.

Suppose $n$ is the minimal symbol of $\wfont{D}\wfont{A}\wfont{C}$  and the word $\wfont{A}$ have the form $\wfont{A}_1n\wfont{A}_{2}n\ldots n \wfont{A}_k$, where $\wfont{A}_1,\ldots,\wfont{A}_k\in\sfont{S}_{n+1}$. Suppose $(s_1,\ldots,s_l)$ is the index collection that is  maximal for $(\wfont{A}_1,\ldots,\wfont{A}_k)$. For every  $i$ from $1$ to $l$, the word $\wfont{B}$ have the form $\wfont{E}_i\wfont{F}_i\wfont{G}_i$, for some words  $\wfont{E}_i$, $\wfont{F}_i$, $\wfont{G}_i$ such that $|\wfont{E}_i|=|\wfont{A}_1n\ldots n\wfont{A}_{s_i-1}n|$, $|\wfont{F}_i|=|\wfont{A}_{s_i}|$, $|\wfont{G}_i|=|n\wfont{A}_{s_i+1}n\ldots n \wfont{A}_{s_k}|$. We consider the minimal $u\in\{1,\ldots,l\}$ such that either the first symbol of $\wfont{G}_u$ is not equal to $n$ or $u$ is equal to $l$. For $i$ from $1$ to $u$, we denote by  $\wfont{H}_i$ the longest postfix of $\wfont{E}_i$ without symbol $n$. 

We choose $\wfont{B}_1,\ldots,\wfont{B}_f\in \sfont{S}_{n+1}$ such that $\wfont{B}_1n\wfont{B}_{2}n\ldots n\wfont{B}_f$ is equal to $\wfont{D}\wfont{B}\wfont{C}$. We find the minimal $ g_1,g_2,\ldots,g_u$ such that, for all $i$ from $1$ to $u$, we have $$\sum\limits_{j=1,2,\ldots ,g_i} |\wfont{B}_jn|\ge|\wfont{D}|+|\wfont{E}_i|+|\wfont{F}_i|+1.$$  Clearly, for all $i$ from $1$ to $u-1$, we have $$\sum\limits_{j=1,2,\ldots ,g_i} |\wfont{B}_jn|=|\wfont{D}|+|\wfont{E}_i|+|\wfont{F}_i|+1$$  and $1\le g_1<g_2<\ldots<g_u\le f$. Note that $\wfont{B}_{g_i}=\wfont{H}_i\wfont{F}_i$, for $i$ from $1$ to $u-1$, and $\wfont{B}_u$ is equal to $\wfont{H}_i \wfont{F}_i \wfont{I}$, for some  $\wfont{I}\in \sfont{S}_{n+1}$. 

Clearly, if the sequence $(\wfont{B}_{g_1},\ldots,\wfont{B}_{g_u})$ is lexicographically greater (not less) than the sequence$(\wfont{A}_{s_1},\ldots,\wfont{A}_{s_l})$, then $\wfont{A}\wor \wfont{D}\wfont{B}\wfont{C}$($\wfont{A}\wore \wfont{D} \wfont{B}\wfont{C}$). By induction hypothesis, we have $\wfont{A}_{s_i}\wore \wfont{H}_i\wfont{F}_i$, for $i$ from $1$ to $u$. Thus the sequence $\wfont{B}_{g_1},\ldots,\wfont{B}_{g_u}$ is lexicographically not less than $\wfont{A}_{s_1},\ldots,\wfont{A}_{s_u}$. If, moreover, $\wfont{I}\ne\varLambda$ or the last symbol of $\wfont{F}_u$ is not equal to the last symbol of $\wfont{A}_{s_u}$ then by induction hypothesis $\wfont{A}_{s_u}\wor \wfont{H}_u\wfont{F}_u \wfont{I}$, hence $\wfont{A}_{s_u}\wor \wfont{B}_{g_u}$, and therefore $\wfont{A}\wor \wfont{D}\wfont{B}\wfont{C}$.

If  $u<l$, then $\wfont{I}\ne \varLambda$, and hence $\wfont{A}\wor \wfont{D} \wfont{B}\wfont{C}$. 

Let us consider the case of $u=l$. If $\wfont{C}=\varLambda$ and the last symbols of $\wfont{B}$ and $\wfont{A}$ are equal, then we already have $\wfont{A}\wore \wfont{D} \wfont{B}\wfont{C}$. If either $\wfont{C}\ne\varLambda$ or $\wfont{G}_u\ne \varLambda$ then either  $\wfont{I}\ne\varLambda$ or $f>g_u$. If $\wfont{I}\ne\varLambda$, then $\wfont{A}\wor \wfont{D} \wfont{B}\wfont{C}$. If $f>g_u$, then, because $(\wfont{B}_{g_1},\ldots,\wfont{B}_{g_u},\wfont{B}_f)$ is lexicographically greater than  $(\wfont{A}_{s_1},\ldots,\wfont{A}_{s_l})$,  we have $\wfont{A}\wor \wfont{D}\wfont{B}\wfont{C}$. Now we consider the last case: $\wfont{C}=\varLambda$, $\wfont{G}_u= \varLambda$, and the last symbols of $\wfont{A}$ and $\wfont{B}$ are not equal. Note that in this case the last symbols of $\wfont{F}_u$ and $\wfont{A}_{s_u}$ are not equal too. Hence, by induction hypothesis, $\wfont{A}_{s_u}\wor\wfont{H}_u\wfont{F}_u=\wfont{B}_{g_u}$. Thus $\wfont{A}\wor \wfont{D}\wfont{B}\wfont{C}$. This finishes the proof of the lemma.\eprv


\section{Ordinal notation systems with undecidable elementary theories}
In this section we prove that for all  $\alpha$ from $3$ to $\omega$ the theory $\Th(\word_\alpha^N,\wor,\romb{1},\romb{3})$ is decidable. We will prove that for all $\alpha$ from $3$ to $\omega$ the set $\word_3^N$ is first-order definable in $(\word_\alpha^N,\wor,\romb{1},\romb{3})$. After that, we use the technique based on hereditary undecidable theories and right total interpretations in order to prove $(\word_\alpha^N,\wor,\romb{1},\romb{3})$. The elementary theory $\Th(\sfont{L}^2_{\textit{fin}})$ of all finite sets with a pair of linear orders on them is hereditary undecidable \cite{Lav63}. We show that there exists a relative right total interpretation of $\Th(\sfont{L}^2_{\textit{fin}})$ in $\Th(\word_3^N,\wor,\romb{1},\romb{3})$. From the late straightforward follows the undecidability of $\Th(\word_3^N,\wor,\romb{1},\romb{3})$. Thus for every  $\alpha\in [3,\omega]$ the elementary theory $\Th(\word_\alpha^N,\wor,\romb{1},\romb{3})$ is undecidable. 

In this section and further we consider theories in model theoretic manner, i.e. as sets of propositions of a signature $\sigma$ (signature of a theory) that include all theorems of predicate calculus for signature $\sigma$ and is closed under the rule {\it Modus Ponens}. Here we use predicate calculus with equality and don't include equality  symbol in signatures.

The {\it elementary theory} of a class of structures $\mathbf{A}$ with the same signature $\sigma$ is the set of all propositions of signature $\sigma$ that are true in all models of the class $\mathbf{A}$. We denote the elementary theory of a class of models  $\mathbf{A}$ with the same signature by $\Th(\mathbf{A})$. The {\it elementary theory} of a model $\mathfrak{A}$ is  $\Th(\{\mathfrak{A}\})$; we denote it by $\Th(\mathfrak{A})$. 

Suppose we have a model $\mathfrak{A}$ with domain $\sfont{A}$. A set $\sfont{E}\subset \underbrace{\sfont{A}\times \sfont{A} \times \ldots \times \sfont{A}}\limits_{\mbox{$n$ times}}$ is {\it definable} in $\mathfrak{A}$ if there is a first-order formula $\ffont{F}(x_1,\ldots,x_n)$ of the signature of the model $\mathfrak{A}$ such that, for all $a_1,\ldots,a_n\in \sfont{A}$ we have
$$(a_1,\ldots,a_n)\in \sfont{E} \iff \mathfrak{A} \models \ffont{F}[a_1,\ldots,a_n/x_1,\ldots,x_n].$$   
We consider every $n$-ary predicate  $\sfont{A}$ as a subset of $\underbrace{\sfont{A}\times \sfont{A} \times \ldots \times \sfont{A}}\limits_{\mbox{$n$ times}}$. Also we consider every function $$f\colon \sfont{D}\to \sfont{A}\mbox{, where }\sfont{D}\subset \underbrace{\sfont{A}\times \sfont{A} \times \ldots \times \sfont{A}}\limits_{\mbox{$n$ times}}$$ as the subset $$\{(a_1,\ldots,a_n,f(a_1,\ldots,a_n))\mid (a_1,\ldots,a_n)\in \sfont{D}\}\subset\underbrace{\sfont{A}\times \sfont{A} \times \ldots \times \sfont{A}}\limits_{\mbox{$n+1$ times}}.$$ Thus we can talk about first-order definability of predicates and function in $\mathfrak{A}$.

\begin{lemma} \label{interpretability_lemma} For an ordinal $\alpha$ from $3$ to $\omega$ the set $\word_3^N$ is definable in $(\word_\alpha^N,\wor,\romb{1},\romb{3})$.
\end{lemma}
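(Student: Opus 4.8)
The plan is to give a first-order formula of the signature $(\wor,\romb{1},\romb{3})$ that holds in $(\word_\alpha^N,\wor,\romb{1},\romb{3})$ exactly of the elements lying in $\word_3^N$. The natural idea is to characterize the words that do \emph{not} contain the symbol $4$, equivalently those $\wfont{A}$ for which there is no representation using $\romb{n}$ with $n\ge 4$; but since $\romb{4},\romb{5},\dots$ are not in our signature, we must detect the presence of large symbols indirectly, using only the two operators $\romb{1}$ and $\romb{3}$ together with the order. The first step is therefore to understand, combinatorially, what the range of $\romb{1}$ and of $\romb{3}$ looks like inside $\word_\alpha^N$: by Proposition \ref{a_diamond_correspondence} and the normal-form analysis, $\romb{n}\wfont{A}$ is the normal form of $\wfont{A} n$, and Lemma \ref{concatenation_lemma} and Lemma \ref{comparission_lemma} describe how appending a symbol interacts with the lexicographic-maximal-subsequence comparison. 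The key structural fact I would extract is that a word belongs to $\word_3^N$ iff it can be ``built up'' from $\varLambda$ by a bounded alternation of the operations available and comparisons with them — more precisely, I expect $\word_3^N$ to be definable as the set of $\wfont{A}$ such that every element $\wor$-below $\wfont{A}$ (or every element in a suitable finite neighborhood determined by $\romb{1},\romb{3}$) is again reachable, which rules out the ``jump'' caused by a symbol $\ge 4$.

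Concretely, I would proceed as follows. First, I would isolate the elements $\romb{3}\varLambda$, $\romb{1}\varLambda$, $\romb{1}\romb{1}\varLambda$, etc., which are definable by explicit formulas (e.g.\ $\romb{1}\varLambda$ is the $\wor$-least element in the range of $\romb{1}$, and $\varLambda$ itself is the $\wor$-least element). Next, using $\romb{3}$ I would pin down a definable element that serves as an upper bound for all of $\word_3^N$ inside $\word_\alpha^N$: note $\word_3^N$ is an initial segment of $(\word_\alpha^N,\wor)$ precisely when no symbol $\ge4$ occurs, and the supremum of $\word_3^N$ among all words corresponds to the ordinal $\omega_4$, whose notation involves the symbol $3$ but not $4$; I would try to name a cofinal-type witness or at least a definable element $\wfont{T}$ such that $\word_3^N = \{\wfont{A} : \wfont{A}\wor\wfont{T}\}$ in the ambient structure, or failing an exact upper bound, characterize membership by the property that $\romb{3}$ applied to $\wfont{A}$ stays below a fixed definable threshold while behaving ``densely''. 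Then the defining formula for $\word_3^N$ would assert that $\wfont{A}$ lies below this threshold and, crucially, that $\wfont{A}$ is in the $\romb{1},\romb{3}$-closure of $\varLambda$ in the order-theoretic sense: every $\wfont{B}\wor\wfont{A}$ admits, via formulas using $\romb{1}$ and $\romb{3}$, a representation certifying $\wfont{B}\in\word_3^N$, an induction that closes because the complexity of words in $\word_3^N$ is controlled.

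The main obstacle — and where I would spend the most effort — is the fact that $\romb{2}$ is absent from the signature, so a word in $\word_3^N$ containing the symbol $2$ is not literally in the $\romb{1},\romb{3}$-term-closure of $\varLambda$, and one must recover access to $2$-blocks purely order-theoretically. The trick I expect to use is the interplay with $\romb{1}$: appending a $3$ and then comparing, or iterating $\romb{1}$, lets one ``see'' where a $2$ would sit; by Lemma \ref{comparission_lemma}, increasing a symbol strictly increases the word (under the stated conditions), so the position of symbol $2$ is squeezed between controlled applications of $\romb{1}$ and $\romb{3}$. Making this squeeze into a genuine first-order definition, uniformly for all $\alpha\in[3,\omega]$ (so that the formula does not depend on $\alpha$ and in particular works for $\alpha=\omega$ where unboundedly many symbols are available), is the delicate point; the uniformity is what forces the definition to be ``intrinsic'' rather than a finite case analysis. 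Once $\word_3^N$ is defined, $\wor$, $\romb{1}$, $\romb{3}$ restrict to it, and the subsequent interpretation of $\Th(\sfont{L}^2_{\textit{fin}})$ proceeds inside this definable substructure.
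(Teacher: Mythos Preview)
Your plan contains the right target in passing --- find a definable element $\wfont{T}$ with $\word_3^N=\{\wfont{A}:\wfont{A}\wor\wfont{T}\}$ --- but you never produce $\wfont{T}$, and the rest of the proposal drifts away from it toward an approach that would not close. The paper's proof is exactly the one-line execution of that target: take $\wfont{T}$ to be the one-symbol word $4$, and define it as the $\wor$-least nonempty $x$ satisfying $\forall y\wor x\,(\romb{3}y\wor x)$. That $4$ satisfies this is immediate (for $\wfont{B}\in\word_3^N$, appending $3$ keeps you in $\word_3^N$, hence below $4$), and that nothing smaller does is equally short (any nonempty $\wfont{B}\in\word_3^N$ ends in a symbol $k\le 3$, so its proper prefix $\wfont{C}$ has $\romb{3}\wfont{C}\wore\wfont{B}$). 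Then $\word_3^N$ is just $\{\wfont{A}:\wfont{A}\wor 4\}$.

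Two places where your proposal goes astray. First, your remark that the supremum ``involves the symbol $3$ but not $4$'' is backwards: the least upper bound of $\word_3^N$ in $\word_\alpha^N$ (for $\alpha\ge4$) is the single-symbol word $4$, and it \emph{is} an element of the ambient model, not merely a cofinal limit to be approximated. Second, and more seriously, the whole ``main obstacle'' paragraph about the absence of $\romb{2}$ and recovering $2$-blocks order-theoretically is a red herring. You do not need to generate or decompose elements of $\word_3^N$ at all; you only need to recognize that $\word_3^N$ is an initial segment and to name its least upper bound. The closure-under-$\romb{1},\romb{3}$ idea you sketch could never work as stated, since words containing the symbol $2$ (or $0$) are in $\word_3^N$ but are not in the term closure of $\varLambda$ under $\romb{1},\romb{3}$, and there is no evident first-order surrogate for ``reachable by some finite iteration'' that would repair this.
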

\sprv For  $\alpha=3$ the lemma obviously holds. Let us prove the lemma in the case of $\alpha\ge 4$.  We consider property of an element $x\in\word_\alpha^N$: $$x\ne\varLambda\&\forall y \wor x (\romb{3} y \wor x).$$ Let us prove that the one symbol word $4$ is the first element $x\in\word_\alpha^N$ such that it satisfies the property under consideration. We claim that for any word $\wfont{A}\in \word_\alpha^N$ we have 
 $$\wfont{A}\in \word_3^N\iff \wfont{A}\wor 4.$$ From Lemma \ref{comparission_lemma} it follows that the word $4$ is the minimal element of $\word_\alpha^N\setminus \word_3^N$.  Note that from Lemma \ref{comparission_lemma}  it follows that, for every word $\wfont{B}\in \word_3$, there is $n$ such that $\wfont{B}\wor 3^n$. Obviously, for every number $n$, we have $3^n\wor 4$. Therefore, for every $\wfont{B}\in \word_3^N$, we have $\wfont{B}\wor 4$, $\romb{3}\wfont{B}\sim \wfont{B} 3\wor 4$, hence the claim holds.  Hence the word $4$ satisfies the required property. Every $\wfont{B}\in \word_3^N\setminus \{\varLambda\}$ is equal to $\wfont{C} k$, for some $k\le 3$ and $\wfont{C}\in\word_3^N$, hence $\romb{3}\wfont{C} \sim \wfont{C} 3 \wore \wfont{B}$. Also from Lemma \ref{comparission_lemma} it follows that $\wfont{C}\wor \wfont{B}$. Thus every element of $\word_3^N$ doesn't satisfies the property under consideration. Hence the word $4$ is the minimal element of the set $\word_\alpha^N$ that satisfies the property under consideration.

Thus in $(\word_\alpha^N,\wor,\romb{1},\romb{3})$ the element $4$ is definable.  Above we have showed that for any word $\wfont{A}\in \word_\alpha^N$ we have 
 $$\wfont{A}\in \word_3^N\iff \wfont{A}\wor 4.$$
The late gives us the required definition.\eprv

Suppose  $\wfont{A}\in \word_3^N\setminus \{\varLambda\}$ and $\wfont{B}\in \word_3^N\setminus \{\varLambda\}$ are words such that $\wfont{A}=\wfont{C}_10\wfont{C}_{2}0\ldots 0\wfont{C}_n$, $\wfont{B}=\wfont{C}_1 0 \wfont{C}_{2}0 \ldots 0 \wfont{C}_m$, for some $n\ge m \ge 1$,  $\wfont{C}_i\in \sfont{S}_{1}$. In this case we say that $\wfont{B}$ is a slice of $\wfont{A}$. We give the predicate $\ffont{Sl}(x,y)$ as  the following:
$$\ffont{Sl}(\wfont{A},\wfont{B})\defiff \mbox{$\wfont{B}$ is a slice of $\wfont{A}$.}$$

\begin{lemma} \label{Sl_lemma} The predicate $\ffont{Sl}(x,y)$ is definable in the model $(\word_3^N,\wor,\romb{1},\romb{3})$.
\end{lemma}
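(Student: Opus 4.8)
The plan is to produce an explicit defining formula: I claim that for $\wfont{A},\wfont{B}\in\word_3^N$,
$$\ffont{Sl}(\wfont{A},\wfont{B})\iff\ \wfont{B}\neq\varLambda\ \wedge\ \wfont{B}\wore\wfont{A}\ \wedge\ \wfont{A}\wor\romb{1}\wfont{B}.$$
Since $\varLambda$ is the $\wor$-least element of $\word_3^N$ (hence $\varLambda$, and with it $\wore$, is definable from $\wor$ alone), the right-hand side is first-order over $(\wor,\romb{1},\romb{3})$ — in fact $\romb{3}$ is not needed — so it suffices to prove this equivalence. Fix $\wfont{B}\in\word_3^N\setminus\{\varLambda\}$, written in normal form as $\wfont{B}=\wfont{C}_10\ldots0\wfont{C}_m$ with $\wfont{C}_i\in\sfont{S}_1$ and $\wfont{C}_m\wore\ldots\wore\wfont{C}_1$. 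Unravelling the definition, $\ffont{Sl}(\wfont{A},\wfont{B})$ holds exactly when $\wfont{A}$ is $\wfont{B}$ itself, or $\wfont{A}=\wfont{C}_10\ldots0\wfont{C}_m0\wfont{D}_10\ldots0\wfont{D}_j$ for some $\wfont{D}_1,\ldots,\wfont{D}_j\in\sfont{S}_1$ with $\wfont{C}_m\wore\wfont{D}_1\wore\ldots\wore\wfont{D}_j$; that is, exactly when the $0$-block sequence of $\wfont{A}$ has $(\wfont{C}_1,\ldots,\wfont{C}_m)$ as an initial segment.

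Two ingredients feed the proof. First I would compute $\romb{1}\wfont{B}$: appending the symbol $1$ changes only the last $0$-block, so $\romb{1}\wfont{B}$ is the normal form of $\wfont{C}_10\ldots0\wfont{C}_{m-1}0(\romb{1}\wfont{C}_m)$ (for $m=1$ this is the normal form of $\wfont{C}_11\in\sfont{S}_1$); if $\wfont{C}_{m-1}=\wfont{C}_m$ the leading block $\wfont{C}_{m-1}$ is absorbed and then $\romb{1}\wfont{B}$ has $0$-block sequence $(\wfont{C}_1,\ldots,\wfont{C}_{m-2},\romb{1}\wfont{C}_m)$ (truncated further if $\wfont{C}_{m-2}=\wfont{C}_m$ too, and so on). Second — and this is the technical heart — I need that $\romb{1}\wfont{C}$ is the immediate $\wor$-successor of $\wfont{C}$ inside $\sfont{S}_1$: no element of $\sfont{S}_1\cap\NF$ lies strictly between $\wfont{C}$ and $\romb{1}\wfont{C}$. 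I would extract this from Lemma~\ref{comparission_lemma} together with the combinatorial definition of $\wore$, the point being that the symbol appended to $\wfont{C}$ is the least possible one.

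With these in hand the equivalence follows by comparing $0$-block sequences via Remark~\ref{wore_characterization}. For ($\Rightarrow$): the $0$-block sequence of a slice $\wfont{A}$ of $\wfont{B}$ agrees with $(\wfont{C}_1,\ldots,\wfont{C}_m)$ through position $m$ and continues with blocks $\wore\wfont{C}_m\wor\romb{1}\wfont{C}_m$, so the lexicographic comparison yields $\wfont{B}\wore\wfont{A}\wor\romb{1}\wfont{B}$ (and $\wfont{B}\neq\varLambda$). For ($\Leftarrow$): from $\wfont{B}\wore\wfont{A}\wor\romb{1}\wfont{B}$ the $0$-block sequence of $\wfont{A}$ is squeezed between those of $\wfont{B}$ and of $\romb{1}\wfont{B}$; a block of $\wfont{A}$ exceeding some $\wfont{C}_j$ with $j<m$ would push $\wfont{A}$ above $\romb{1}\wfont{B}$, a smaller one would push it below $\wfont{B}$, so the first $m-1$ blocks of $\wfont{A}$ are $\wfont{C}_1,\ldots,\wfont{C}_{m-1}$; the $m$-th block $\wfont{E}$ satisfies $\wfont{C}_m\wore\wfont{E}\wor\romb{1}\wfont{C}_m$, hence $\wfont{E}=\wfont{C}_m$ by the successor fact; and normality forces every later block $\wore\wfont{C}_m$. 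Thus $\wfont{A}$ is a slice of $\wfont{B}$. The absorption case $\wfont{C}_{m-1}=\wfont{C}_m$ is handled identically, using the truncated $0$-block sequence of $\romb{1}\wfont{B}$ noted above and the fact that $\wfont{E}\wore\wfont{E}_{m-1}$ is already forced by normality.

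The main obstacle is precisely the auxiliary claim that $\romb{1}\wfont{C}$ is the $\wor$-successor of $\wfont{C}$ within $\sfont{S}_1$: it pins the upper end of the relevant interval exactly at $\romb{1}\wfont{B}$ and rules out "near misses", i.e.\ words $\wfont{A}$ whose $0$-block decomposition coincides with $\wfont{B}$'s except in the final block. Everything else is routine bookkeeping with $0$-block sequences via Lemma~\ref{comparission_lemma} and Remark~\ref{wore_characterization}. (Note that, in contrast with Lemma~\ref{interpretability_lemma}, neither $\romb{3}$ nor the absent operation "append $0$" is used: the set $\{\wfont{A}:\ffont{Sl}(\wfont{A},\wfont{B})\}$ is the half-open interval $[\wfont{B},\romb{1}\wfont{B})$, whose least element is $\wfont{B}$ itself, so there is no need to name the $\wor$-successor of $\wfont{B}$.)
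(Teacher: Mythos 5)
Your proposal is correct and takes essentially the same route as the paper: the paper uses the identical defining formula (plus the redundant conjunct $\wfont{A}\ne\varLambda$), and its proof likewise hinges on the fact that $\romb{1}\wfont{C}=\wfont{C}1$ is the immediate $\wor$-successor of $\wfont{C}$ within $\sfont{S}_1\cap\NF$ (stated there as: $\wfont{C}\wor\wfont{I}$ implies $\romb{1}\wfont{C}\wore\wfont{I}$), combined with the same lexicographic comparison of $0$-block sequences in both directions. The only blemish is a terminological slip (``a slice $\wfont{A}$ of $\wfont{B}$'' should read ``$\wfont{B}$ is a slice of $\wfont{A}$''); mathematically your argument matches the paper's.
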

\sprv  Let us prove that for all $\wfont{A},\wfont{B}\in \word_3^N$:
\begin{equation}\label{Sl_lemma_eq}\ffont{Sl}(\wfont{A},\wfont{B}) \iff \wfont{A}\ne \varLambda\&  \wfont{B} \ne \varLambda \& \wfont{B} \wore \wfont{A} \& \wfont{A} \wor \romb{1} \wfont{B}.\end{equation}

We consider words $\wfont{A},\wfont{B}\in \word_3^N$. Suppose  we have numbers $n$, $m$  and words $\wfont{A}_1,\ldots,\wfont{A}_n, \wfont{B}_1,\ldots,\wfont{B}_m\in \sfont{S}_1\cap \NF$ such that $\wfont{A}=\wfont{A}_10\wfont{A}_{2}0\ldots 0 \wfont{A}_n$ and $\wfont{B}=\wfont{B}_10\wfont{B}_20\ldots 0\wfont{B}_m$. 

Assume that $\wfont{B}$ is a slice of $\wfont{A}$. Let us prove that the right side of (\ref{Sl_lemma_eq}) holds. From our assumption we conclude that $\wfont{A}\ne \varLambda$ and $\wfont{B}\ne \varLambda$. Also, from the assumption it follows that $m\le n$ and $\wfont{B}_i=\wfont{A}_i$, for all $i$ from $1$ to $m$. From the definition of $\NF$ it follows that $\wfont{A}_j\wore \wfont{A}_i$, for  $1\le i \le j \le n$. Therefore, because of Remark \ref{wore_characterization}, we have $\wfont{B}\wore \wfont{A}$. Note that $\wfont{A}_m\wor \wfont{A}_m 1$  and the lexicographically maximal subsequence of the sequence $(\wfont{A}_1,\wfont{A}_2,\ldots \wfont{A}_{m-1},\wfont{A}_m 1)$ is equal to $(\wfont{A}_1,\wfont{A}_{2},\ldots,\wfont{A}_s,\wfont{A}_m1)$, for some $0\le s < m$ and $\wfont{A}_{s+1}\wor \wfont{A}_m 1$. Therefore $\wfont{A} \wor \wfont{B} 1$, hence $\wfont{A}\wor \romb{1} \wfont{B}$.

Now we assume that a pair $(\wfont{A}$, $\wfont{B})$ satisfies the right side of  (\ref{Sl_lemma_eq}). Let us prove that $\wfont{B}$ is a slice of $\wfont{A}$. From the conditions  $\wfont{B} \ne \varLambda$ and $\wfont{B} \wore \wfont{A}$ it follows that there exists a natural number $l$ from $0$ to $\min(m,n)$ such that, for all $i$ from $1$ to $l$, we have $\wfont{A}_i=\wfont{B}_i$. Also, either $l=m\le n$ or both $l<\min(m,n)$ and $\wfont{B}_{l+1}\wor \wfont{A}_{l+1}$. Let us prove by contradiction that $l=m\le n$. Assume that $l<\min(m,n)$ and $\wfont{B}_{l+1}\wor \wfont{A}_{l+1}$. Clearly, we have $\romb{1}\wfont{C}\wore \wfont{I}$ and $\romb{1}\wfont{C}=\wfont{C} 1$, for all $\wfont{C},\wfont{I}\in \sfont{S}_1\cap\NF$ such that $\wfont{C}\wor \wfont{I}$. Let us prove that $\wfont{B} 1\wore \wfont{A}_1 0\ldots 0 \wfont{A}_l 0\wfont{B}_{l+1}1$. We consider the lexicographically maximal subsequence of the sequence $(\wfont{B}_{1},\ldots,\wfont{B}_{m-1},\wfont{B}_m 1)$. In the case of $\wfont{B}_m=\wfont{B}_{l+1}$ this subsequence can be given in the form $(\wfont{A}_1,\ldots,\wfont{A}_l,\wfont{B}_{l+1} 1)$. In the case of $\wfont{B}_m\wor\wfont{B}_{l+1}$ this subsequence can be given in the form $(\wfont{A}_1,\ldots,\wfont{A}_l,\wfont{B}_{l+1},\ldots,\wfont{B}_s,\wfont{B}_m1)$, for some $s$ from $l$ to $m-1$. Obviously, in both cases $\wfont{B} 1\wore \wfont{A}_1 0 \ldots 0 \wfont{A}_l 0 \wfont{B}_{l+1}1$.  Hence $\romb{1} \wfont{B} \sim  \wfont{B} 1\wore \wfont{A}_1 0 \ldots 0 \wfont{A}_l 0\wfont{B}_{l+1}1\wore \wfont{A}_1 0 \ldots 0 \wfont{A}_l 0 \wfont{A}_{l+1} \wore \wfont{A}$. The late contradicts with $\wfont{A}\wor\romb{1}\wfont{B}$.  Hence $l=m\le n$. Therefore the left side of (\ref{Sl_lemma_eq}) holds.\eprv

 We denote by $\wfont{I}_s$ the word $ 3^s 2$. For natural numbers  $k$ and $h$ such that $1\le k \le h$, we denote by $\wfont{K}_{h,k}$ the word $\wfont{I}_{h-1}\ldots\wfont{I}_{k+1}\wfont{I}_k3^k$, and we denote  by $\wfont{L}_h$ the word $\wfont{I}_{h-1}\ldots\wfont{I}_1$. Note that, for all  $k$ and $h$ such that $1\le k\le h$, we have 
$$\wfont{I}_{h-1}\ldots \wfont{I}_k 3^{k-1}\wore \wfont{L}_h \wor \wfont{I}_{h-1}\ldots \wfont{I}_{k}3^k=\wfont{K}_{h,k}.$$
Note that $$\wfont{K}_{h,1}\wor \wfont{K}_{h,2}\wor \ldots \wor \wfont{K}_{h,h}.$$
Suppose $\wfont{A}\in \word_3^N$ is equal to $\wfont{A}_r0\wfont{A}_{r-1}0\ldots 0\wfont{A}_1$, where all $\wfont{A}_i\in \sfont{S}_1\cap \word_3^N$. We put in the correspondence with $\wfont{A}$ the finite sequence of words $\textbf{u}(\wfont{A})=(u_1(\wfont{A}),\ldots,u_r(\wfont{A}))$, where for every $i\in\{1,\ldots,r\}$ we put $u_i(\wfont{A})=\romb{3}\wfont{A}_{r}0\wfont{A}_{r-1}0\ldots 0\wfont{A}_i$.

\begin{lemma} \label{seq_existence} Suppose we have non-zero natural numbers $h\ge 1$, $r$ and a collection of natural numbers $k_1,\ldots,k_r\le h$. Then there exists a word $\wfont{A}\in \word_3^N$ such that the sequence $\textbf{u}(\wfont{A})$ is equal to the sequence $\wfont{K}_{h,k_1},\ldots,\wfont{K}_{h,k_r}$.
\end{lemma}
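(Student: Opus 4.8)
The plan is to write such an $\wfont{A}$ down explicitly. For $1\le k\le h$ let $\wfont{P}_k$ denote the word $\wfont{I}_{h-1}\ldots\wfont{I}_k3^{k-1}$; it is easily seen to be a normal form (its blocks over the minimal symbol are the $\wore$-nonincreasing $3^{h-1},\ldots,3^k,3^{k-1}$), and by the relations displayed immediately before the lemma one has $\wfont{P}_k3=\wfont{K}_{h,k}$, $\wfont{P}_k\wore\wfont{L}_h$ and $\wfont{L}_h\wor\wfont{K}_{h,k}$; note also $\wfont{P}_1=\wfont{L}_h$. Now put, for $i=1,\ldots,r$,
$$\wfont{A}_i=\underbrace{\wfont{L}_h\,1\,\wfont{L}_h\,1\,\ldots\,1\,\wfont{L}_h}_{i\text{ occurrences of }\wfont{L}_h}\,1\,\wfont{P}_{k_i},$$
and set $\wfont{A}=\wfont{A}_r0\wfont{A}_{r-1}0\ldots0\wfont{A}_1$. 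The claim is that $\wfont{A}\in\word_3^N$ and $\textbf{u}(\wfont{A})=(\wfont{K}_{h,k_1},\ldots,\wfont{K}_{h,k_r})$.

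The verification comes down to four $\wore$-computations, each an instance of Remark \ref{wore_characterization} together with Lemmas \ref{maximal_lexicographic_algorithm}--\ref{comparission_lemma} and the inequalities above. First, each $\wfont{A}_i$ lies in $\sfont{S}_1\cap\word_3^N$: splitting it at the minimal symbol $1$ gives the blocks $\wfont{L}_h,\ldots,\wfont{L}_h,\wfont{P}_{k_i}$, a $\wore$-nonincreasing list of normal forms since $\wfont{P}_{k_i}\wore\wfont{L}_h$. Second, $\wfont{A}_1\wore\wfont{A}_2\wore\ldots\wore\wfont{A}_r$: comparing $\wfont{A}_{i-1}$ and $\wfont{A}_i$ block-by-block, the first $i-1$ blocks coincide (they are $\wfont{L}_h$), and at the $i$-th block $\wfont{A}_i$ still carries $\wfont{L}_h$ while $\wfont{A}_{i-1}$ carries $\wfont{P}_{k_{i-1}}\wore\wfont{L}_h$ (or has already ended); hence $\wfont{A}\in\word_3^N$ and $\wfont{A}_r0\ldots0\wfont{A}_1$ is precisely the factorisation of $\wfont{A}$ used in the definition of $\textbf{u}$. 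Third, $\romb{3}\wfont{A}_i=\wfont{K}_{h,k_i}$: since $\wfont{P}_{k_i}3=\wfont{K}_{h,k_i}$ we get $\wfont{A}_i3=\wfont{L}_h1\ldots1\wfont{L}_h1\wfont{K}_{h,k_i}$, and because $\wfont{L}_h\wor\wfont{K}_{h,k_i}$ the trailing block $\wfont{K}_{h,k_i}$ is the strictly $\wore$-greatest of the blocks at the minimal symbol, so by Lemma \ref{maximal_lexicographic_algorithm} their lexicographically maximal subsequence is $(\wfont{K}_{h,k_i})$, giving $\romb{3}\wfont{A}_i=\NF(\wfont{A}_i3)=\wfont{K}_{h,k_i}$. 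Fourth, $\wfont{A}_j\wor\wfont{K}_{h,k_i}$ for all $i,j$: the $\wore$-greatest block of $\wfont{A}_j$ is $\wfont{L}_h$, and since $\wfont{K}_{h,k_i}$ has minimal symbol $\ge2$, the comparison of $\wfont{A}_j$ with $\wfont{K}_{h,k_i}$ is decided, via Remark \ref{wore_characterization}, by $\wfont{L}_h\wor\wfont{K}_{h,k_i}$.

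Putting these together, fix $i$. If $i=r$ then $u_r(\wfont{A})=\romb{3}\wfont{A}_r=\wfont{K}_{h,k_r}$. If $i<r$ then $u_i(\wfont{A})=\romb{3}(\wfont{A}_r0\ldots0\wfont{A}_i)=\NF\bigl(\wfont{A}_r0\ldots0\wfont{A}_{i+1}0(\wfont{A}_i3)\bigr)$; here $\wfont{A}_i3\sim\wfont{K}_{h,k_i}$ and $\wfont{A}_j\wor\wfont{K}_{h,k_i}\sim\wfont{A}_i3$ for every $j$ with $i<j\le r$, so $\wfont{A}_i3$ is the strictly $\wore$-greatest block at the minimal symbol $0$ and it occurs last; by Lemma \ref{maximal_lexicographic_algorithm} the maximal subsequence is $(\wfont{A}_i3)$, whence $u_i(\wfont{A})=\NF(\wfont{A}_i3)=\romb{3}\wfont{A}_i=\wfont{K}_{h,k_i}$. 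Hence $\textbf{u}(\wfont{A})=(\wfont{K}_{h,k_1},\ldots,\wfont{K}_{h,k_r})$, as required.

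The genuinely delicate point, and the reason for padding $\wfont{A}_i$ with exactly $i$ copies of $\wfont{L}_h$, is a clash of two constraints: being in $\word_3^N$ forces the blocks $\wfont{A}_r,\ldots,\wfont{A}_1$ to be $\wore$-sorted, whereas a $k$-coding word like $\wfont{P}_k$ (with $\romb{3}\wfont{P}_k=\wfont{K}_{h,k}$) is $\wore$-monotone in $k$, so one cannot take $\wfont{A}_i=\wfont{P}_{k_i}$ once the $k_i$ are not monotone. The padding makes the order between $\wfont{A}_{i-1}$ and $\wfont{A}_i$ depend only on $i$, while $\wfont{K}_{h,k_i}=\wfont{P}_{k_i}3$ lies strictly above $\wfont{L}_h$, so $\romb{3}$ still absorbs all the padding and returns exactly $\wfont{K}_{h,k_i}$; everything else is the small collection of lexicographic comparisons above, each checked directly from the combinatorial definition via Remark \ref{wore_characterization} and the prefix estimates of Lemma \ref{comparission_lemma}.
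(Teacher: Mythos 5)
Your proposal is correct and follows essentially the same route as the paper: the paper takes $\wfont{A}=(\wfont{L}_h 1)^{r-1}\wfont{C}_r0\ldots0(\wfont{L}_h1)^{0}\wfont{C}_1$ with $\wfont{C}_i=\wfont{I}_{h-1}\ldots\wfont{I}_{k_i}3^{k_i-1}$ (your $\wfont{P}_{k_i}$), i.e.\ the identical padding idea with $i-1$ rather than $i$ copies of $\wfont{L}_h1$ in the $i$-th block, and verifies the same four comparisons ($\wfont{P}_{k}\wore\wfont{L}_h\wor\wfont{K}_{h,k}$, normal-formness, absorption of the padding under $\romb{3}$, and dominance of $\wfont{A}_i3\sim\wfont{K}_{h,k_i}$ over the remaining blocks). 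The off-by-one in the padding is immaterial; both constructions work for the same reasons.
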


\sprv For $i$ from $1$ to $r$ we denote by $\wfont{C}_i$ the word $\wfont{I}_{h-1}\ldots \wfont{I}_{k_i}3^{k_i-1}$. We put: 
 $$\wfont{A}=(\wfont{L}_h 1)^{r-1}\wfont{C}_r0(\wfont{L}_h 1)^{r-2}\wfont{C}_{r-1}\ldots (\wfont{L}_h 1)^0\wfont{C}_1.$$
Let us consider a number  $i$ from $1$ to $r$. The word $u_i(\wfont{A})$ is equal to the normal form of the word $(\wfont{L}_h1)^{r-1}\wfont{C}_r0(\wfont{L}_h1)^{r-2}\wfont{C}_{r-1}\ldots (\wfont{L}_h1)^{i-1}\wfont{C}_i3$. Because $\wfont{L}_h \wor \wfont{K}_{h,k_i}$,  we, using the definition of $\wore$, conclude that $(\wfont{L}_h 1)^{i-1} \wfont{C}_{i}3\sim \wfont{K}_{h,k_i}$, hence the normal form of $(\wfont{L}_h 1)^i \wfont{C}_{i}3$ is equal to $\wfont{K}_{h,k_i}$. Also, from the definition of $\wore$ and the fact that $\wfont{L}_h\wor \wfont{K}_{h,k_i}$ we have $(\wfont{L}_h1)^{j-1}\wfont{C}_j\wor \wfont{K}_{h,k_i}\sim (\wfont{L}_h1)^i\wfont{C}_i3$, for all $j$ from $1$ to $r$. Thus the normal form of  $(\wfont{L}_h)^{r-1}\wfont{C}_r0(\wfont{L}_h)^{r-2}\wfont{C}_{r-1}\ldots (\wfont{L}_h1)^{i}\wfont{C}_{i+1}0(\wfont{L}_h1)^{i-1}\wfont{C}_{i}3$ is equal to $\wfont{K}_{h,k_i}$. Hence $\textbf{u}(\wfont{A})$ satisfies the required conditions.\eprv

We will give the definition of {\it parametric relative right total interpretation}. Suppose we have signatures $\sigma_1$ and $\sigma_2$ and first-order variables $p_1,\ldots,p_n$. We consider the notion of parametric relative translation with parameters $p_1,\ldots,p_n$ from the first-order language of the signature $\sigma_1$ to the first-order language of the signature $\sigma_2$. A translation $\ffont{tr}$ of the considered type is determined by formula $\ffont{D}_{\ffont{tr}}(x,p_1,\ldots,p_n)$ of the signature $\sigma_2$ that defines the domain of translation and formulas of signature $\sigma_2$ that are interpretations of symbols from $\sigma_1$; the late formulas have additional arguments $p_1,\ldots,p_n$. We obtain the $\ffont{tr}$-translation of an arbitrary firs-order formula of the signature $\sigma_1$ as the extension of the translation of symbols from $\sigma_1$ with quantifiers relativised to $D(x,p_1,\ldots,p_n)$. Suppose $\tfont{T}_1$ is a theory of the signature $\sigma_1$, $\tfont{T}_2$ is a theory of the signature $\sigma_2$. Suppose we have a translation of the considered type $\ffont{tr}\colon\varphi \longmapsto \varphi^*$: 
$$\{\ffont{A}\mid \ffont{A}\mbox{ is a proposition of the signature $\sigma_1$ and }\tfont{T}_2\vdash \forall p_1,\ldots, p_n (\ffont{A}^*)\}\subset \tfont{T}_1.$$
Then we call $\ffont{tr}$ a parametric relative right total interpretation of $\tfont{T}_1$ in $\tfont{T}_2$ with parameters $p_1,\ldots,p_n$. 

Let us consider the case when $\tfont{T}_1$ is the elementary theory of a class of models $\sfont{B}$, $\tfont{T}_2$ is the elementary theory of a model $\mathfrak{A}$. Suppose we have a translation $\ffont{tr}\colon \varphi\longmapsto \varphi^*$ of considered type. Let us construct a family of models $\mathfrak{I}(p_1,\ldots,p_n)$ of the signature  $\sigma_1$. For $a_1,\ldots a_n\in \mathfrak{A}$ the domain of the model $\mathfrak{I}(a_1,\ldots,a_n)$ is the set $\sfont{I}(a_1,\ldots,a_n)=\{b\in \mathfrak{A} \mid \mathfrak{A} \models \ffont{D}_{\ffont{tr}}(b,a_1,\ldots,a_n)\}$. Also, for $a_1,\ldots,a_n\in \mathfrak{A}$ and $k$-ary predicate symbol $\ffont{P}$ from $\sigma_1$, the interpretation of  $\ffont{P}$ in $\mathfrak{I}(a_1,\ldots,a_n)$ is $\{ (b_1,\ldots,b_k) \mid b_1,\ldots b_k \in \sfont{I}(a_1,\ldots,a_n), \mathfrak{A}\models \ffont{P}^{*}(a_1,\ldots,a_n,b_1,\ldots,b_k)\}$; for  a $k$-ary functional symbol $f$ from $\sigma_1$ we give the interpretation of $f$ in  $\mathfrak{I}$ as the following: $$\mathfrak{I}\models f(b_1,\ldots,b_k)=c \iff \mathfrak{A}\models \ffont{F}^*(b_1,\ldots,b_k,c,a_1,\ldots,a_n),$$ where $\ffont{F}$ is the formula $f(x_1,\ldots,x_k)=y$. Thus we defined the family of models $\mathfrak{I}(p_1,\ldots,p_n)$. If, for every $\mathfrak{B}$ from $\sfont{B}$, there are $a_1,\ldots,a_n\in\mathfrak{A}$ such that $\mathfrak{I}(a_1,\ldots,a_n)$ is isomorphic to $\mathfrak{B}$, then the translation $\ffont{tr}$ is a parametric relative right total interpretation of $\tfont{T}_1$ in $\tfont{T}_2$.

We call a theory $\tfont{T}$  {\it hereditary undecidable} if every subtheory $\tfont{T}'\subset\tfont{T}$ is undecidable. 

The following well-known fact obviously holds:

\begin{fact} \label{hereditary_undecidability_fact} Suppose  $\tfont{T}_1$ is a theory with the finite signature, and $\tfont{T}_2$ is a theory such that $\tfont{T}_1$ is  hereditary undecidable, and there is a parametric relative right total interpretation of $\tfont{T}_1$ in $\tfont{T}_2$. Then the theory $\tfont{T}_2$ is undecidable. 
\end{fact}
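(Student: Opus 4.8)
The plan is to prove the contrapositive: assuming $\tfont{T}_2$ is decidable, I will exhibit a decidable subtheory of $\tfont{T}_1$, which is impossible because $\tfont{T}_1$ is hereditary undecidable; hence $\tfont{T}_2$ must be undecidable. Let $\ffont{tr}\colon\varphi\longmapsto\varphi^{*}$ be the given parametric relative right total interpretation of $\tfont{T}_1$ in $\tfont{T}_2$, with parameters $p_1,\ldots,p_n$, domain formula $\ffont{D}_{\ffont{tr}}(x,p_1,\ldots,p_n)$, and finite source signature $\sigma_1$. The candidate subtheory is
$$\tfont{S}=\{\ffont{A}\mid\ffont{A}\text{ is a proposition of }\sigma_1\text{ and }\tfont{T}_2\vdash\forall p_1,\ldots,p_n\,(\ffont{A}^{*})\}.$$

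I would then check three things. (i) $\tfont{S}\subseteq\tfont{T}_1$: this is precisely the defining clause of a right total interpretation. (ii) $\tfont{S}$ is decidable: because $\sigma_1$ is finite, the translation $\ffont{A}\mapsto\ffont{A}^{*}$ is computed by structural recursion from a fixed finite table (the $\sigma_2$-formulas interpreting the symbols of $\sigma_1$, with quantifiers relativised to $\ffont{D}_{\ffont{tr}}$), so $\ffont{A}\mapsto\forall p_1,\ldots,p_n\,(\ffont{A}^{*})$ is computable; composing it with the assumed decision procedure for $\tfont{T}_2$ decides membership in $\tfont{S}$. (iii) $\tfont{S}$ is itself a theory of signature $\sigma_1$: it is closed under modus ponens since $(\ffont{A}\to\ffont{B})^{*}$ is $\ffont{A}^{*}\to\ffont{B}^{*}$ and $\tfont{T}_2$ is closed under modus ponens and under distribution of $\forall p_1,\ldots,p_n$ over implication, and it contains the theorems of predicate calculus for $\sigma_1$ because the relativisation of a logical validity is logically provable. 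From (i)--(iii), $\tfont{S}$ is a decidable subtheory of $\tfont{T}_1$: if $\tfont{S}=\tfont{T}_1$ then $\tfont{T}_1$ itself is a decidable theory, contradicting that $\tfont{T}_1$ is hereditary undecidable, and otherwise $\tfont{S}$ is a proper decidable subtheory of $\tfont{T}_1$, again a contradiction; this completes the argument.

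The point I expect to need the most care is the second half of (iii). Strictly, $\ffont{A}^{*}$ is provable in predicate calculus, for a logical validity $\ffont{A}$, only together with the assumption $\exists x\,\ffont{D}_{\ffont{tr}}(x,p_1,\ldots,p_n)$ that the interpreted domain is nonempty, so if that is not $\tfont{T}_2$-provable for some values of the parameters then $\tfont{S}$ as written could fail to contain a few logical theorems and hence fail to be deductively closed. This is handled in the usual way: one either includes provable nonemptiness of the domain in the notion of relative interpretation, or, in the model-theoretic right total situation of the previous subsection, one notes that the parameters $a_1,\ldots,a_n$ witnessing $\mathfrak{I}(a_1,\ldots,a_n)\cong\mathfrak{B}$ for a model $\mathfrak{B}$ of $\tfont{T}_1$ necessarily give a nonempty interpreted domain, and therefore replaces $\tfont{S}$ by $\tfont{S}'=\{\ffont{A}\mid\tfont{T}_2\vdash\forall p_1,\ldots,p_n\,(\exists x\,\ffont{D}_{\ffont{tr}}(x,p_1,\ldots,p_n)\to\ffont{A}^{*})\}$, which is a bona fide decidable theory contained in $\tfont{T}_1$. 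With this adjustment the rest of the argument goes through unchanged.
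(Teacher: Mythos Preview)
The paper gives no proof of this fact at all: it simply introduces it as ``the following well-known fact obviously holds'' and moves on. Your argument is the standard one and is correct, including your attention to the domain non-emptiness issue in part~(iii); the adjusted set $\tfont{S}'$ you describe is exactly the usual fix, and in the paper's intended application the interpreted domain is always nonempty anyway (it contains, e.g., the slice $\wfont{A}$ of $\wfont{A}$ itself), so either variant works.
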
 

\begin{lemma} \label{E_undecidability} The theory $\Th(\word_3^N,\wor,\romb{1},\romb{3})$ is undecidable.
\end{lemma}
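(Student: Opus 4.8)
\textbf{Proof plan for Lemma \ref{E_undecidability}.} The plan is to build a parametric relative right total interpretation of the hereditarily undecidable theory $\Th(\sfont{L}^2_{\textit{fin}})$ of finite sets equipped with two linear orders into $\Th(\word_3^N,\wor,\romb{1},\romb{3})$, and then invoke Fact \ref{hereditary_undecidability_fact}. The combinatorial data assembled before this lemma is exactly what is needed: by Lemma \ref{seq_existence}, for any height $h$ and any tuple $(k_1,\dots,k_r)$ with each $k_i\le h$ there is a word $\wfont{A}$ whose associated sequence $\textbf{u}(\wfont{A})$ is $(\wfont{K}_{h,k_1},\dots,\wfont{K}_{h,k_r})$; since $\wfont{K}_{h,1}\wor\cdots\wor\wfont{K}_{h,h}$, the index $k_i$ records an element of the linearly ordered set $\{1,\dots,h\}$ and the position $i$ records an element of the linearly ordered set $\{1,\dots,r\}$. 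So a single word $\wfont{A}$ (together with a parameter word fixing $h$, i.e.\ fixing $\wfont{L}_h$) encodes a finite multiset of points of $\{1,\dots,h\}$ presented in a list, and the two natural orders — ``earlier in the list'' and ``smaller $k$-value'' — are the two linear orders of the target structure $\sfont{L}^2_{\textit{fin}}$.

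First I would fix the interpreting machinery. The domain of the interpretation, with parameters $\wfont{A}$ (the encoding word) and $\wfont{L}$ (a word of the form $\wfont{L}_h$ marking the scale), is the set of words $\romb{3}\wfont{A}_r0\cdots 0\wfont{A}_i$, i.e.\ the members of $\textbf{u}(\wfont{A})$; these should be cut out by a formula asserting ``$x$ is a $\romb{3}$-image of an initial slice of $\wfont{A}$ in the sense of $\ffont{Sl}$'', which is available because $\ffont{Sl}$ is definable (Lemma \ref{Sl_lemma}) and $\romb{3}$ is in the signature. The first order (list position) is just the restriction of $\wor$ — or its reverse — to this domain; no extra work. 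The second order compares the $k$-values: $u_i(\wfont{A})=\wfont{K}_{h,k_i}$, and $\wfont{K}_{h,k}\wor\wfont{K}_{h,k'}$ iff $k<k'$, so again the restriction of $\wor$ does the job, but one must check that distinct elements of $\textbf{u}(\wfont{A})$ with equal $k$-value are genuinely equal (they are, by the normal form uniqueness and the computation in Lemma \ref{seq_existence}), so that the quotient by equality behaves correctly. The equality of the interpreted structure is the equality inherited from $\word_3^N$, which is fine since we work without an equality symbol in signatures and the target has decidable equality definable via $\wor$.

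The main thing to verify — and the step I expect to be the real obstacle — is \emph{right totality}: every finite double-linear-order structure must be isomorphic to some $\mathfrak I(\wfont{A},\wfont{L})$. Given a finite set with two linear orders, enumerate its elements $e_1,\dots,e_r$ in increasing first-order, let $k_i\in\{1,\dots,r\}$ be the rank of $e_i$ in the second order, set $h=r$, and apply Lemma \ref{seq_existence} to get $\wfont{A}$ with $\textbf{u}(\wfont{A})=(\wfont{K}_{h,k_1},\dots,\wfont{K}_{h,k_r})$; take $\wfont{L}=\wfont{L}_h$. Then the map $e_i\mapsto u_i(\wfont{A})$ is a bijection onto the interpreted domain, carrying the first order to $\wor$ (position order) and the second order to $\wor$ (value order) by the monotonicity facts $\wfont{K}_{h,k}\wor\wfont{K}_{h,k'}\!\iff\! k<k'$ recorded before Lemma \ref{seq_existence}. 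One must be careful that the domain-defining formula picks out \emph{exactly} $\textbf{u}(\wfont{A})$ and nothing more — here the characterization of slices via $\ffont{Sl}$ and the observation that $\romb{3}$ applied to an initial slice of $\wfont{A}$ lands inside $\textbf{u}(\wfont{A})$ must be married carefully, perhaps restricting to those $x$ that are $\wor$-below $\romb{3}\wfont{A}$ and above a bound coming from $\wfont{L}$ — and that the two parameters can themselves be defined so as to range only over legitimate pairs, or else the interpretation is genuinely parametric (with $\wfont{L}$ constrained to be of the form $\wfont{L}_h$, a definable condition by the slice apparatus applied to $\wfont I_s$-patterns). Once totality and faithfulness of the two orders are checked, Fact \ref{hereditary_undecidability_fact} with $\tfont{T}_1=\Th(\sfont{L}^2_{\textit{fin}})$ (finite signature, hereditarily undecidable by \cite{Lav63}) and $\tfont{T}_2=\Th(\word_3^N,\wor,\romb{1},\romb{3})$ yields the undecidability claim.
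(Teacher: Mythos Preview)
Your overall strategy---interpret $\Th(\sfont{L}^2_{\textit{fin}})$ parametrically and invoke Fact~\ref{hereditary_undecidability_fact}---matches the paper's, but your interpretation has a genuine gap. You take the domain to be the \emph{members of $\textbf{u}(\wfont{A})$}, i.e.\ the words $u_i(\wfont{A})=\wfont{K}_{h,k_i}$ themselves, and then say that both the list-position order and the $k$-value order are ``the restriction of $\wor$ to this domain''. But that is one relation, not two: $\wor$ on $\{\wfont{K}_{h,k_1},\dots,\wfont{K}_{h,k_r}\}$ compares $k$-values only, and the list position $i$ is invisible once you pass to the $\romb{3}$-image. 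Worse, if two positions $i\ne j$ share the same $k$-value then $u_i(\wfont{A})=u_j(\wfont{A})$ as elements of $\word_3^N$, so your domain has fewer than $r$ points and cannot carry a second linear order distinguishing them. (You noticed this collapse but treated it as harmless; it is fatal.)

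The paper avoids this by taking the domain to be the \emph{slices} of $\wfont{A}$ themselves, via $\ffont{D}_{\ffont{tr}}(x,p)\equiv\ffont{Sl}(p,x)$, with a single parameter $p=\wfont{A}$. Slices are pairwise distinct (different numbers of $0$'s), and $\wor$ on slices recovers the position order. The second order is then $(x_1\ffont{L}_2 x_2)^*\equiv\romb{3}x_1\wor\romb{3}x_2$: compare slices \emph{after} applying $\romb{3}$, which produces the $u_i$'s and hence the $k$-value order. This is exactly the separation you are missing. Your second parameter $\wfont{L}=\wfont{L}_h$ is unnecessary; nothing in the domain or the two order formulas needs to know $h$.
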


\sprv We consider the class $\sfont{L}^2_{\textit{fin}}$  of all  models $(\sfont{B},\ffont{L}_1,\ffont{L}_2)$ such that $\sfont{B}$ is a finite set, $\ffont{L}_1$ and $\ffont{L}_2$ are strict linear orders on it. The elementary theory of $\sfont{L}^2_{\textit{fin}}$ is hereditary undecidable \cite{Lav63}. 

Let us built a parametric relative right total interpretation $\ffont{tr}\colon \varphi \longmapsto \varphi^*$  of $\Th(\sfont{L}^2_{\textit{fin}})$ in $\Th(\word_3^N,\wor,\romb{1},\romb{3})$; if we will built this interpretation then by Lemma \ref{hereditary_undecidability_fact} we will prove the lemma. $p$ will be the only parameter of the interpretation. We put 
\begin{enumerate}
\item $\ffont{D}_{\ffont{tr}}(x,p)\rightleftharpoons \ffont{Sl}(p,x)$;
\item $(x_1\ffont{L}_1x_2)^*\rightleftharpoons x_1 \wor x_2$;
\item $(x_1\ffont{L}_2 x_2)^*\rightleftharpoons \romb{3}x_1 \wor \romb{3}x_2$.
\end{enumerate}

From $\ffont{tr}$ we obtain the family of models $\mathfrak{I}(p)$. Let us show that for every model  $(\sfont{B},\ffont{L}_1,\ffont{L}_2)\in \sfont{L}_{\textit{fin}}^2$ there exists  $\wfont{A}\in \word_3^N$ such that $\mathfrak{I}(\wfont{A})$ is isomorphic to $(\sfont{B},\ffont{L}_1,\ffont{L}_2)$. We put $h=|\sfont{B}|$.  We enumerate elements of  $\sfont{B}$ with respect to $\ffont{L}_1$: $b_1 \ffont{L}_1 b_2 \ffont{L}_1\ldots \ffont{L}_1 b_h$. Suppose we have: $b_{s_1}\ffont{L}_2b_{s_2}\ffont{L}_2\ldots \ffont{L}_2b_{s_h}$. By Lemma \ref{seq_existence}, there is $\wfont{A}$ such that $\textbf{u}(\wfont{A})=(\wfont{K}_{h,s_1},\wfont{K}_{h,s_2},\ldots,\wfont{K}_{h,s_h})$. Clearly, $\mathfrak{I}(\wfont{A})$ is isomorphic to $(\sfont{B},\ffont{L}_1,\ffont{L}_2)$. Therefore, $\ffont{tr}$ is the required parametric relative right total interpretation.\eprv

Using Lemma \ref{E_undecidability} and Lemma \ref{interpretability_lemma} we conclude 
\begin{theorem} For every $\alpha\in[3,\omega]$ the theory $\Th(\word_\alpha^N,\wor,\romb{1},\romb{3})$ is undecidable.
\end{theorem}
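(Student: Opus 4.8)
The plan is to reduce the theorem to the two lemmas already in hand. The statement asserts undecidability of $\Th(\word_\alpha^N,\wor,\romb{1},\romb{3})$ for every $\alpha\in[3,\omega]$, and the essential work has been localized: Lemma \ref{E_undecidability} gives undecidability at the base case $\alpha=3$, and Lemma \ref{interpretability_lemma} shows that $\word_3^N$ is a definable subset of the larger structure $(\word_\alpha^N,\wor,\romb{1},\romb{3})$ for every $\alpha$ from $3$ to $\omega$. So the proof is simply a relativization argument: the definable set $\word_3^N$, together with the restrictions of $\wor$, $\romb{1}$, $\romb{3}$ to it, forms a first-order definable substructure isomorphic to $(\word_3^N,\wor,\romb{1},\romb{3})$.

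Concretely, I would first invoke Lemma \ref{interpretability_lemma} to fix a formula $\ffont{F}(x)$ of signature $(\wor,\romb{1},\romb{3})$ defining $\word_3^N$ inside $(\word_\alpha^N,\wor,\romb{1},\romb{3})$. One must check that $\word_3^N$ is closed under $\romb{1}$ and $\romb{3}$: for $\wfont{A}\in\word_3^N$ the word $\romb{1}\wfont{A}$ is the normal form of $\wfont{A}1$ and $\romb{3}\wfont{A}$ the normal form of $\wfont{A}3$, both of which only use symbols $\le 3$, so they remain in $\word_3^N$; and since functions are total on $\word_\alpha^N$, the relativized functions are total on the defined domain. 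Hence the relative translation $\ffont{tr}$ that restricts all quantifiers to $\ffont{F}(x)$ and interprets each symbol by itself is an interpretation of $\Th(\word_3^N,\wor,\romb{1},\romb{3})$ in $\Th(\word_\alpha^N,\wor,\romb{1},\romb{3})$ — indeed the identity map witnesses that $\mathfrak{I}$ is isomorphic to the target structure, so it is even a (non-parametric) relative interpretation, hence in particular a parametric relative right total interpretation in the sense defined above.

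Then I would apply Fact \ref{hereditary_undecidability_fact}: the signature $(\wor,\romb{1},\romb{3})$ is finite, $\Th(\word_3^N,\wor,\romb{1},\romb{3})$ is undecidable by Lemma \ref{E_undecidability} (and, being undecidable, any theory it interprets via a right total interpretation must be undecidable — here one should note that Fact \ref{hereditary_undecidability_fact} is phrased for hereditarily undecidable $\tfont{T}_1$, but since $\Th(\sfont{L}^2_{\textit{fin}})$ is hereditarily undecidable and interprets into $\Th(\word_3^N,\ldots)$ which in turn interprets into $\Th(\word_\alpha^N,\ldots)$, composing the two interpretations gives a right total interpretation of the hereditarily undecidable $\Th(\sfont{L}^2_{\textit{fin}})$ directly into $\Th(\word_\alpha^N,\ldots)$). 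Composition of relative right total interpretations is again one, which is the one routine point to spell out. This yields undecidability of $\Th(\word_\alpha^N,\wor,\romb{1},\romb{3})$ for all $\alpha\in[3,\omega]$.

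The only mild subtlety — and the closest thing to an obstacle — is bookkeeping about which notion of interpretation is being composed: one must confirm that composing the parametric right total interpretation of $\Th(\sfont{L}^2_{\textit{fin}})$ into $\Th(\word_3^N,\ldots)$ from Lemma \ref{E_undecidability} with the (parameter-free) relative interpretation of $\Th(\word_3^N,\ldots)$ into $\Th(\word_\alpha^N,\ldots)$ coming from the definability in Lemma \ref{interpretability_lemma} again produces a parametric relative right total interpretation, so that Fact \ref{hereditary_undecidability_fact} applies with $\tfont{T}_1=\Th(\sfont{L}^2_{\textit{fin}})$ and $\tfont{T}_2=\Th(\word_\alpha^N,\wor,\romb{1},\romb{3})$. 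This is entirely mechanical: one substitutes the domain formula and symbol-translations of the outer interpretation into those of the inner one, relativizing appropriately, and notes that right-totality of both factors gives right-totality of the composite. Everything else is immediate from the cited results.
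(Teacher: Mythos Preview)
Your proposal is correct and follows exactly the paper's approach: the paper's proof consists of the single sentence ``Using Lemma \ref{E_undecidability} and Lemma \ref{interpretability_lemma} we conclude'', and your argument is precisely the unpacking of this, including the routine check that $\word_3^N$ is closed under $\romb{1},\romb{3}$ and the composition of interpretations needed to invoke Fact \ref{hereditary_undecidability_fact}.
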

\begin{theorem}  For every $\alpha\in[3,\omega]$ the theory $\Th(\word_\alpha^N,\wor,\varLambda,\langle \romb{i}\mid i\in\omega, i\le\alpha\rangle)$ is undecidable.
\end{theorem}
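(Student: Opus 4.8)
The plan is to deduce this from the preceding theorem by a completely standard reduct argument, since for $\alpha\in[3,\omega]$ the signature $\{\wor,\romb{1},\romb{3}\}$ is a sub-signature of $\{\wor,\varLambda\}\cup\{\romb{i}\mid i\in\omega,\ i\le\alpha\}$ (as $1\le\alpha$ and $3\le\alpha$), and the reduct of the structure $(\word_\alpha^N,\wor,\varLambda,\langle\romb{i}\mid i\in\omega,\ i\le\alpha\rangle)$ to $\{\wor,\romb{1},\romb{3}\}$ is exactly the structure $(\word_\alpha^N,\wor,\romb{1},\romb{3})$ whose theory was just shown to be undecidable.

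First I would record the general fact underlying the reduction: if $\sigma_0\subseteq\sigma_1$ are computable signatures and $\mathfrak{A}$ is a $\sigma_1$-structure, then $\Th(\mathfrak{A}\!\restriction\!\sigma_0)$ coincides with the set of those propositions of $\Th(\mathfrak{A})$ whose signature is $\sigma_0$. The set of propositions of signature $\sigma_0$ is a decidable subset of the set of propositions of signature $\sigma_1$ (both signatures being computable; in our application $\sigma_0$ is even finite), so any decision procedure for $\Th(\mathfrak{A})$ restricts to a decision procedure for $\Th(\mathfrak{A}\!\restriction\!\sigma_0)$. Contrapositively, undecidability of $\Th(\mathfrak{A}\!\restriction\!\sigma_0)$ forces undecidability of $\Th(\mathfrak{A})$.

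I would then apply this with $\sigma_0=\{\wor,\romb{1},\romb{3}\}$, with $\sigma_1=\{\wor,\varLambda\}\cup\{\romb{i}\mid i\in\omega,\ i\le\alpha\}$, and with $\mathfrak{A}=(\word_\alpha^N,\wor,\varLambda,\langle\romb{i}\mid i\in\omega,\ i\le\alpha\rangle)$. Since $\mathfrak{A}\!\restriction\!\sigma_0=(\word_\alpha^N,\wor,\romb{1},\romb{3})$, whose elementary theory is undecidable by the previous theorem, the theory $\Th(\mathfrak{A})$ is undecidable, which is the claim.

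There is essentially no real obstacle here; the only point that deserves an explicit remark is the case $\alpha=\omega$, where $\sigma_1$ is infinite. One should note that $\sigma_1$ is nonetheless computable (the operator $\romb{i}$ is uniformly computable in $i$, and membership of a proposition's signature in $\sigma_0$ is decidable), which is all that the reduct argument requires; everything else is routine.
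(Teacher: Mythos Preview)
Your argument is correct and matches the paper's approach: the paper simply states this theorem immediately after the previous one without further proof, since it follows at once by the reduct argument you describe. Your explicit remark about the computability of the infinite signature in the case $\alpha=\omega$ is a helpful clarification that the paper leaves implicit.
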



\section{Some theories of ordinals and words}
\label{interpretability_section}
In this section we prove that, for $\alpha\in[2,\omega)$, theories $\Th(\word_\alpha^N,\wor,\varLambda,\romb{0},\romb{1},\romb{2})$ are undecidable. For every $\alpha\in[2,\omega)$, we will construct an interpretation of $\Th(\word_\alpha^N,\wor,\varLambda,\romb{0},\romb{1},\romb{2})$ in the weak monadic theory of $(\omega_\alpha,\ffont{R})$; here $\ffont{R}$ is a binary relation that is related to the standard cofinal sequences. The weak monadic theory of $(\omega_\alpha,\ffont{R})$ is decidable \cite{Brau09}. In order to construct this interpretation, we construct the following sequence of interpretations of structures, for all $\alpha\in[2,\omega)$:
\begin{enumerate}
\item an interpretation of $(\word_\alpha^N,=,\wor,\romb{0},\romb{1},\romb{2})$ in $\WMSExt{(\omega_\alpha,<,\psi)}$ (the structure consists of the ordinals below $\omega_\alpha$, the finite multisets of ordinals below $\omega_{\alpha}$, the standard order on ordinals, a special function $\psi$ on ordinals, and some natural predicates for work with multisets), we construct the interpretation in Lemma \ref{WMMSO_words_interpretability};
\item an interpretation of $\WMSExt{(\omega_\alpha,<,\psi)}$ in $\WSExt{(\omega_\alpha,<,\psi)}$ (the structure consists of the ordinals below $\omega_\alpha$, the finite sets of ordinals below $\omega_{\alpha}$, the standard order on ordinals, the function $\psi$, and the predicate $\in$), we construct the interpretation in  Lemma \ref{WMSO_WMMSO_interpretability};
\item an interpretation of $\WSExt{(\omega_\alpha,<,\psi)}$ in $\WSExt{(\omega_\alpha,\ffont{R})}$ (the structure consists of the ordinals below $\omega_{\alpha}$, the finite sets of ordinals, the relation $\ffont{R}$, and the predicate $\in$), we construct the interpretation in Lemma \ref{psi_interpretability}.
\end{enumerate}
Note that $\Th(\WSExt{(\omega_\alpha,\ffont{R})})$ essentially is the weak monadic theory of $(\omega_\alpha,\ffont{R})$.

There are functions $o_n\colon \NF\cap \sfont{S}_n \to \On$. We simultaneously define the functions (essentially, we recall the definition of the functions $o_n$ from \cite[Section 6]{Bek04-2}):
\begin{itemize}
\item $o_n(n^k)=k$;
\item $o_n(\wfont{A}_1n\wfont{A}_2n\ldots n\wfont{A}_k)=\omega^{o_{n+1}(\wfont{A}_1)}+\omega^{o_{n+1}(\wfont{A}_2)}+\ldots+\omega^{o_{n+1}(\wfont{A}_k)}$, where $k\ge 1$, $\wfont{A}_1,\ldots,\wfont{A}_k\in \sfont{S}_{n+1}$ and $\wfont{A}_k\wore \wfont{A}_{k-1}\wore \ldots \wore \wfont{A}_1\ne \varLambda$.
\end{itemize}

Cantor normal form of an ordinal $\alpha$ is the form $\alpha=\omega^{\alpha_1}+\ldots+\omega^{\alpha_n}$, where $\alpha_1\ge\ldots\ge\alpha_n$ and $n\ge 0$. There is the only Cantor normal form for a given ordinal.

We prove by induction on $k$ that, for every $k$ and $n$, the function $o_n$ is an isomorphism of $(\sfont{S}_n\cap \word_{n+k}^N,\wor)$ and $(\omega_{k+1},<)$.

In this section we use many-sorted predicate calculus. The models of the many-sorted predicate calculus are models with several domains, i.e. with one domain for every type of variables. The notions of elementary theory, definable predicate, definable set, and definable function can be reformulated in a natural way for the case of models of many-sorted predicate calculus.

 For every set $\sfont{A}$, we denote by  $\pfin(\sfont{A})$ the set of all finite subsets of $\sfont{A}$ . We call a function $f$ a {\it finite multiset} if the domain $\dom(f)$ is finite and the range $\ran(f)$ is included in $[1,\omega)$. Multiset $f$ is {\it included} in $g$, if $\dom(f)\subset\dom(g)$ and for all $x\in\dom(f)$ we have $f(x)\le g(x)$; in this situation we write $f\subset_Mg$. We define $\mathfrak{i}_f(x)$ the {\it multiplicity} of $x$ in a finite multiset $f$. If $x\in\dom(f)$, then we put $\mathfrak{i}_f(x)=f(x)$. Otherwise, we put $\mathfrak{i}_f(x)=0$. For every $x$ and multiset $f$ we define $x\in_M f \defiff \mathfrak{i}_f(x)>0$.  For every set $\sfont{A}$, we denote by $\multi(\sfont{A})$ the  set of all finite multisets $f$ such that all elements of $f$ are from $\sfont{A}$. 

We consider a model $\mathfrak{A}$ of one-sorted predicate calculus with the domain $\sfont{A}$. We define two  models that extends $\mathfrak{A}$ with an additional domain. The model $\WSExt{\mathfrak{A}}$ is the extension of $\mathfrak{A}$ by the additional domain $\pfin(A)$ and the predicate $\in$ on $\sfont{A}\times \pfin(\sfont{A})$. The $\WMSExt{\mathfrak{A}}$ is the extension of $\mathfrak{A}$ by the additional domain $\multi(\sfont{A})$, the predicate $\in_M$, on $\sfont{A}\times \multi(\sfont{A})$ and the predicate $\subset_M$ on $\multi(\sfont{A})\times \multi(\sfont{A})$. Note that $\Th(\WSExt{\mathfrak{A}})$ is the weak monadic theory of $\mathfrak{A}$.

Further, we will prove several lemmas about definability of several predicates in models $(\alpha,<)$, $\WSExt{(\alpha,<)}$, and $\WMSExt{(\alpha,<)}$, where $\alpha$ is an ordinal; note that we use von Neumann ordinals and hence $$\alpha=\{\beta\in \On\mid \beta<\alpha\}.$$ Obviously, all sets, predicates, and functions that are definable in $(\alpha,<)$ are also definable in $\WSExt{(\alpha,<)}$ and $\WMSExt{(\alpha,<)}$. 

\begin{lemma} \label{Uni_definitions} Suppose  $\alpha>0$ is a limit ordinal. Then the following predicates, functions, and elements are definable in the model $(\alpha,<)$: 
\begin{enumerate}
\item function $S\colon\On\to\On$, $S\colon \beta\longmapsto\beta+1$, restricted to $\alpha$;
\item element $0$;
\item predicate $x\in \Lim$, where $\Lim$ is the class of all non-zero non-successor ordinals, restricted to $\alpha$;
\item equivalence relation $\ffont{FinDif}(x,y)$, where $$\ffont{FinDif}(\beta,\gamma) \defiff \exists n\in \omega (\beta+n=\gamma\lor\beta=\gamma+n),$$  restricted to $\alpha$.
\end{enumerate}
\end{lemma}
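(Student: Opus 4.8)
The plan is to define each object in turn, using only the order $<$ on the limit ordinal $\alpha$, and bootstrapping later items from earlier ones. First I would handle item (2): $0$ is simply the $<$-least element, definable by $\forall y\,(x<y\lor x=y)$, which makes sense since $\alpha>0$. For item (1), the successor function: for $\beta<\alpha$, $\beta+1$ is again below $\alpha$ because $\alpha$ is a limit ordinal, and $S(\beta)$ is characterized as the $<$-least element strictly greater than $\beta$; so $S(x)=y$ is defined by $x<y\;\&\;\forall z\,(x<z\to(y<z\lor y=z))$. Item (3): $x\in\Lim$ says $x$ is nonzero and has no immediate predecessor, i.e. $x\ne 0\;\&\;\forall y\,(y<x\to\exists z\,(y<z\;\&\;z<x))$ — equivalently $x\ne 0\;\&\;\lnot\exists y\,(S(y)=x)$ using item (1); this is correct precisely because $\alpha$ being a limit ordinal guarantees that every $\gamma\in\Lim$ with $\gamma<\alpha$ really is a limit in the structure $(\alpha,<)$ (no "false limits" created by truncation).

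The main work is item (4), the relation $\FinDif$. The idea is: $\FinDif(\beta,\gamma)$ holds iff $\beta$ and $\gamma$ have the same limit part, i.e. writing each ordinal uniquely as $\lambda+n$ with $\lambda\in\Lim\cup\{0\}$ and $n\in\omega$, we have $\FinDif(\beta,\gamma)\iff \lambda_\beta=\lambda_\gamma$. So I first want to define the function $\ell(x)$ sending $x$ to its limit part $\lambda_x$: this is the largest element of $\Lim\cup\{0\}$ that is $\le x$, definable using items (2) and (3) by
$$\ell(x)=y \iff (y=0\lor y\in\Lim)\;\&\;(y<x\lor y=x)\;\&\;\forall z\,\big((z=0\lor z\in\Lim)\;\&\;(z<x\lor z=x)\to (z<y\lor z=y)\big).$$
(Such a $y$ exists because $0\le x$ is a candidate and the set of candidates, being a set of ordinals bounded by $x$, has a maximum.) Then set
$$\FinDif(x,y)\defiff \ell(x)=\ell(y),$$
which is a first-order formula in the signature $\{<\}$ after unfolding $\ell$, $\Lim$, $S$, and $0$. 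It remains to check that this agrees with the stated definition, i.e. that $\lambda_\beta=\lambda_\gamma$ iff $\beta+n=\gamma$ or $\beta=\gamma+n$ for some $n\in\omega$; this is the standard fact that two ordinals differ by a finite amount exactly when their Cantor normal forms agree except possibly in the final finite summand, equivalently when they have the same limit part — a routine ordinal arithmetic verification.

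The only real subtlety, and the point I would be most careful about, is the hypothesis that $\alpha$ is a \emph{limit} ordinal: this is exactly what ensures that the definitions of $S$, of $\Lim$, and of $\ell$ computed inside $(\alpha,<)$ coincide with their "true" values in $\On$. If $\alpha$ were a successor, $S$ would be undefined at the top element and $\Lim$ would misclassify it; the limit hypothesis removes both issues. I expect no genuine obstacle beyond bookkeeping: each clause is a short explicit formula, and the correctness of each reduces to an elementary property of ordinal arithmetic under truncation to a limit ordinal.
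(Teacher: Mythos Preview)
Your proposal is correct and essentially matches the paper's proof: items (1)--(3) are literally the same definitions, and for item (4) the paper gives the direct characterization $\FinDif(\beta,\gamma)\iff\beta=\gamma$ or the half-open interval between them contains no element of $\Lim$, which is equivalent to (and slightly simpler than) your detour through the auxiliary limit-part function $\ell$. Both versions reduce $\FinDif$ to the already-defined predicate $\Lim$, so there is no substantive difference.
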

\sprv For every $\beta,\gamma\in \alpha$ the following equivalences holds:


$1.\ S(\beta)=\gamma\iff \beta<\gamma \& \forall \delta\in\alpha (\gamma\le \delta \lor \delta\le \beta);$

$2.\ \beta=0\iff \forall \delta\in \alpha(\beta\le\delta);$

$3.\ \beta\in\Lim \iff \beta\ne 0\& \forall \delta_1\in \alpha (\delta_1 <\beta \to \exists \delta_2\in \alpha (\delta_2<\beta \& \delta_1<\delta_2));$

$\begin{aligned}4.\ \ffont{FinDif}(\beta,\gamma)\iff  \beta=\gamma & \lor (\beta<\gamma \& \forall \delta\in \alpha( \beta<\delta\le\gamma \to \delta\not \in \Lim)) \lor\\ & (\gamma<\beta \& \forall \delta\in \alpha( \gamma<\delta\le\beta \to \delta\not \in \Lim)).\\ \end{aligned}$

The equivalences show that the functions, predicates, and elements under considerations are definable.\eprv

We denote by $\emptyset^{\mathrm{M}}$ the empty multiset. 

\begin{lemma} Suppose $\alpha\in \On$. Then  the function $$\min\colon \pfin(\alpha)\setminus\{\emptyset\}\to \alpha$$ is definable in $\WSExt{(\alpha,<)}$ and the function $$\min\colon \multi(\alpha)\setminus\{\emptyset^{\mathrm{M}}\}\to \alpha$$  is definable in $\WMSExt{(\alpha,<)}$.\end{lemma}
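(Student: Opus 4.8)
The plan is simply to write down explicit first-order defining formulas, using that the minimum of a nonempty (multi)set is its $\le$-least member and that $\le$ is available because our predicate calculus contains equality.

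First I would handle the set case. View the function $\min\colon\pfin(\alpha)\setminus\{\emptyset\}\to\alpha$ as its graph, the set of pairs $(X,\beta)\in\pfin(\alpha)\times\alpha$ with $X\neq\emptyset$ and $\beta=\min X$. I claim this graph is defined in $\WSExt{(\alpha,<)}$ by
$$\ffont{F}(X,\beta)\ \rightleftharpoons\ \beta\in X\ \wedge\ \forall\gamma\,\bigl(\gamma\in X\to(\gamma=\beta\lor\beta<\gamma)\bigr).$$
For any $X$ and $\beta$: if $\ffont{F}(X,\beta)$ holds, then $X$ is nonempty (it contains $\beta$), $\beta\in X$, and no element of $X$ is strictly below $\beta$, so by linearity of $<$ on $\alpha$ the element $\beta$ is the least element of $X$, i.e. $\beta=\min X$; conversely $(X,\min X)$ plainly satisfies $\ffont{F}$ for nonempty $X$. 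Since $\ffont{F}$ is never satisfied when $X=\emptyset$, this matches that $\min$ is undefined there.

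The multiset case is identical, replacing $\in$ by the signature symbol $\in_M$ of $\WMSExt{(\alpha,<)}$ and $\emptyset$ by $\emptyset^{\mathrm{M}}$: the graph of $\min\colon\multi(\alpha)\setminus\{\emptyset^{\mathrm{M}}\}\to\alpha$ is defined in $\WMSExt{(\alpha,<)}$ by
$$\beta\in_M f\ \wedge\ \forall\gamma\,\bigl(\gamma\in_M f\to(\gamma=\beta\lor\beta<\gamma)\bigr),$$
using that $x\in_M f$ holds iff $\mathfrak{i}_f(x)>0$, so that a multiset is $\emptyset^{\mathrm{M}}$ precisely when nothing is $\in_M$-related to it; in particular satisfiability of $\beta\in_M f$ forces $f\neq\emptyset^{\mathrm{M}}$.

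There is no genuine obstacle here. The only points to notice are that $\le$ is expressible — it abbreviates $x=y\lor x<y$, and equality belongs to the logic — and that the domain restrictions (nonemptiness) need no separate treatment, being subsumed by the conjunct $\beta\in X$ (resp. $\beta\in_M f$). Functionality and totality of the defined relations on $\pfin(\alpha)\setminus\{\emptyset\}$ (resp. $\multi(\alpha)\setminus\{\emptyset^{\mathrm{M}}\}$) follow from linearity of $<$ together with the fact that every nonempty finite set (resp. multiset) of ordinals has a least element.
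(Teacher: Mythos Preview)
Your proof is correct and essentially the same as the paper's: both give the obvious explicit defining formula $\beta\in X\ \&\ \forall\gamma(\gamma<\beta\to\gamma\notin X)$ (equivalently, your contrapositive form) and then note that the multiset case is identical with $\in_M$ in place of $\in$.
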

\sprv For every $\sfont{Q}\in \pfin(\alpha)\setminus\emptyset$ and $\beta\in \alpha$, we have the following equivalence 
$$\min(\sfont{Q})=\beta\iff \beta\in \sfont{Q} \& \forall \gamma\in \alpha(\gamma<\beta\to\gamma\not\in \sfont{Q}).$$
We have built the required definition in $\WSExt{(\alpha,<)}$. Similarly, we construct the required definition in $\WMSExt{(\alpha,<)}$.\eprv

\begin{lemma} \label{Multi_defenitions} Suppose $\mathfrak{A}$ is a one-sorted model with the domain $\sfont{A}$. Then the following predicates are definable in the model $\WMSExt{\mathfrak{A}}$:
\begin{enumerate}
\item the predicate $\ffont{CLess}(x,\sfont{X},\sfont{Y})$ such that for all  $(a,\sfont{Q}_1,\sfont{Q}_2)\in \sfont{A}\times \multi(\sfont{A})\times \multi(\sfont{A})$  we have $\ffont{CLess}(a,\sfont{Q}_1,\sfont{Q}_2)$ iff the multiplicity of $a$ in $\sfont{Q}_1$ is less than the multiplicity of $a$ in $\sfont{Q}_2$;
\item the predicate $\ffont{CEq}(x,\sfont{X},\sfont{Y})$ such that for all $(a,\sfont{Q}_1,\sfont{Q}_2)\in \sfont{A}\times \multi(\sfont{A})\times \multi(\sfont{A})$ we have $\ffont{CEq}(a,\sfont{Q}_1,\sfont{Q}_2)$ iff the multiplicity of $a$ in $\sfont{Q}_1$ is equal to the multiplicity of $a$ in $\sfont{Q}_2$;
\item predicate $\ffont{CS}(x,\sfont{X},\sfont{Y})$ such that for all $(a,\sfont{Q}_1,\sfont{Q}_2)\in \sfont{A}\times \multi(\sfont{A})\times \multi(\sfont{A})$ we have $\ffont{CS}(a,\sfont{Q}_1,\sfont{Q}_2)$ iff the multiplicity of $a$ in $\sfont{Q}_1$ is equal to the multiplicity of $a$ in $\sfont{Q}_2$ minus $1$.
\end{enumerate}
\end{lemma}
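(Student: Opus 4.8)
The plan is to reduce all three predicates to a single definable auxiliary notion: that of a multiset consisting only of copies of a prescribed element. For an element variable $x$ and a multiset variable $\sfont{X}$ put
$$\ffont{Pure}(x,\sfont{X})\defiff\forall z\,(z\in_M\sfont{X}\to z=x),$$
which says that $\sfont{X}$ is a (possibly empty) multiset all of whose members equal $x$. I would first record two elementary observations. (i) For every $a\in\sfont{A}$ and every $n\in[0,\omega)$ there is exactly one $\sfont{X}\in\multi(\sfont{A})$ with $\ffont{Pure}(a,\sfont{X})$ and $\mathfrak{i}_{\sfont{X}}(a)=n$; for $n>0$ this uses $a\in\sfont{A}$, and for $n=0$ it is $\emptyset^{\mathrm{M}}$. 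In particular, as $\sfont{X}$ ranges over the multisets with $\ffont{Pure}(a,\sfont{X})$, the value $\mathfrak{i}_{\sfont{X}}(a)$ ranges over all of $[0,\omega)$. (ii) If $\ffont{Pure}(a,\sfont{X})$ and $\sfont{Q}\in\multi(\sfont{A})$ is arbitrary, then $\sfont{X}\subset_M\sfont{Q}$ iff $\mathfrak{i}_{\sfont{X}}(a)\le\mathfrak{i}_{\sfont{Q}}(a)$: the case $\mathfrak{i}_{\sfont{X}}(a)=0$ is $\sfont{X}=\emptyset^{\mathrm{M}}\subset_M\sfont{Q}$, and when $\mathfrak{i}_{\sfont{X}}(a)>0$ we have $\dom(\sfont{X})=\{a\}$, so the domain-inclusion clause and the pointwise-value clause in the definition of $\subset_M$ together amount exactly to $\mathfrak{i}_{\sfont{X}}(a)\le\mathfrak{i}_{\sfont{Q}}(a)$.

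Using (i) and (ii) I would define
$$\ffont{CLess}(a,\sfont{Q}_1,\sfont{Q}_2)\defiff\exists\sfont{X}\,\bigl(\ffont{Pure}(a,\sfont{X})\wedge\sfont{X}\subset_M\sfont{Q}_2\wedge\lnot(\sfont{X}\subset_M\sfont{Q}_1)\bigr).$$
By (ii) a pure witness $\sfont{X}$ satisfies the matrix of this formula precisely when $\mathfrak{i}_{\sfont{Q}_1}(a)<\mathfrak{i}_{\sfont{X}}(a)\le\mathfrak{i}_{\sfont{Q}_2}(a)$, and by (i) such an $\sfont{X}$ exists iff $\mathfrak{i}_{\sfont{Q}_1}(a)<\mathfrak{i}_{\sfont{Q}_2}(a)$; so the formula defines $\ffont{CLess}$. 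Then, since the multiplicities are natural numbers and hence linearly ordered,
$$\ffont{CEq}(a,\sfont{Q}_1,\sfont{Q}_2)\defiff\lnot\ffont{CLess}(a,\sfont{Q}_1,\sfont{Q}_2)\wedge\lnot\ffont{CLess}(a,\sfont{Q}_2,\sfont{Q}_1),$$
and, expressing ``$\mathfrak{i}_{\sfont{Q}_1}(a)=\mathfrak{i}_{\sfont{Q}_2}(a)-1$'' as ``$\mathfrak{i}_{\sfont{Q}_2}(a)$ is the least natural number strictly above $\mathfrak{i}_{\sfont{Q}_1}(a)$'' (which in particular forces $\mathfrak{i}_{\sfont{Q}_2}(a)\ge 1$),
$$\ffont{CS}(a,\sfont{Q}_1,\sfont{Q}_2)\defiff\ffont{CLess}(a,\sfont{Q}_1,\sfont{Q}_2)\wedge\forall\sfont{Q}_3\,\bigl(\ffont{CLess}(a,\sfont{Q}_1,\sfont{Q}_3)\to\ffont{CLess}(a,\sfont{Q}_2,\sfont{Q}_3)\vee\ffont{CEq}(a,\sfont{Q}_2,\sfont{Q}_3)\bigr).$$
Here the edge case $\mathfrak{i}_{\sfont{Q}_2}(a)=0$ takes care of itself, since then the first conjunct $\ffont{CLess}(a,\sfont{Q}_1,\sfont{Q}_2)$ already fails, matching the fact that no multiplicity equals $-1$; and the inner $\forall\sfont{Q}_3$ has the intended force because, by (i), $\mathfrak{i}_{\sfont{Q}_3}(a)$ realizes every natural number. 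Substituting the definition of $\ffont{CLess}$ (and then of $\ffont{CEq}$) into the last two displays yields genuine formulas of the signature of $\WMSExt{\mathfrak{A}}$.

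This argument is mostly bookkeeping, so I do not expect a serious obstacle; the one spot that needs attention is observation (ii), since the definition of $\subset_M$ combines a domain-inclusion condition with a pointwise-inequality condition, and one must isolate the multiplicity-zero (empty-multiset) case for the equivalence with a plain comparison of multiplicities to go through cleanly. The only other thing to keep in mind is that the pure-$a$ multisets invoked as existential witnesses, and as the range of $\sfont{Q}_3$, genuinely belong to $\multi(\sfont{A})$ — which is exactly where the hypothesis $a\in\sfont{A}$ is used.
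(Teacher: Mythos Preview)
Your proof is correct and follows essentially the same approach as the paper: the paper also defines $\ffont{CLess}$ via an existential witness multiset containing only copies of $a$ (the paper's condition $\forall b\,(b\in_M\sfont{Q}_3\leftrightarrow a=b)$ is your $\ffont{Pure}$ with nonemptiness built in), then defines $\ffont{CEq}$ exactly as you do, and defines $\ffont{CS}$ by the logically equivalent clause $\forall\sfont{Q}_3\,\lnot(\ffont{CLess}(a,\sfont{Q}_3,\sfont{Q}_2)\wedge\ffont{CLess}(a,\sfont{Q}_1,\sfont{Q}_3))$. Your version is more carefully justified (the paper simply asserts the equivalences as obvious), but the underlying idea is identical.
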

\sprv Obviously, for all triples $(a,\sfont{Q}_1,\sfont{Q}_2)\in A\times \multi(A)\times \multi(A)$, the following equivalences holds:
\begin{enumerate}
\item $\ffont{CLess}(a,\sfont{Q}_1,\sfont{Q}_2)\iff \exists \sfont{Q}_3\in\multi(A) (\forall b\in A(b\in_M \sfont{Q}_3\leftrightarrow a=b)\& \sfont{Q}_3\subset_M\sfont{Q}_2\&\lnot \sfont{Q}_3\subset_M \sfont{Q}_1).$
\item $\ffont{CEq}(a,\sfont{Q}_1,\sfont{Q}_2)\iff \lnot \ffont{CLess}(a,\sfont{Q}_1,\sfont{Q}_2)\& \lnot \ffont{CLess}(a,\sfont{Q}_2,\sfont{Q}_1).$
\item $\ffont{CS}(a,\sfont{Q}_1,\sfont{Q}_2)\iff$
 
$\;\;\ffont{CLess}(a,\sfont{Q}_1,\sfont{Q}_2)\& \forall \sfont{Q}_3 \in \multi(A) (\lnot (\ffont{CLess}(a,\sfont{Q}_3,\sfont{Q}_2)\&  \ffont{CLess}(a,\sfont{Q}_1,\sfont{Q}_3))).$
\end{enumerate}

Therefore, the required predicates are definable.\eprv

We define function $\psi\colon \On\to \On$: 
\begin{itemize}
\item $\psi(0)=\omega$;
\item $\psi(\omega^{\alpha_1}+\omega^{\alpha_2}+\ldots+\omega^{\alpha_n})=\omega^{\alpha_1}+\omega^{\alpha_2}+\ldots+\omega^{\alpha_{n-1}}+\omega^{\alpha_n+1}$, where $n\ge 1$ and $\alpha_1\ge\alpha_2\ge\ldots\ge\alpha_n$.
\end{itemize}

We say that an ordinal $\alpha$  is {\it closed under $\psi$}, if for every $\beta<\alpha$ we have $\psi(\beta)<\alpha$.

\begin{remark} \label{psi_closed} An ordinal $\alpha$ is closed under $\psi$ iff either  $\alpha=0$ or $\alpha=\omega^{\omega\cdot\beta}$, for some $\beta>0$.
\end{remark}

Below in several lemmas we construct interpretations of some individual many-sorted models in other individual many-sorted models. We construct an interpretation of a many-sorted model $\mfont{A}$ in a many-sorted model $\mfont{B}$ by  
\begin{enumerate}
\item a choice of the corresponding type of $\mfont{B}$, for every type of $\mfont{A}$;
\item a choice of injective functions $f_i\colon x\longmapsto x^I$ from domains of $\mfont{A}$ to the corresponding domains of $\mfont{B}$;
\item a choice of formulas $\ffont{D}_i(x)$ in the language of $\mfont{B}$ that defines the full images under $f_i$ of the corresponding domains of $\mfont{A}$;
\item for  all predicates and functions from the signature of $\mfont{A}$, a choice of a formula that defines in $\mfont{B}$ the image under functions $f_i$ of this predicate or function.
\end{enumerate}

\begin{lemma} \label{WMMSO_words_interpretability}Suppose $\alpha$ is an ordinal from $2$ to $\omega$. Then the model $(\word^N_\alpha,\wor,\varLambda,\romb{0},\romb{1},\romb{2})$ is interpretable in $\WMSExt{(\omega_\alpha,<,\psi)}$.
\end{lemma}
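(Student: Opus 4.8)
The plan is to carry the whole word structure over to ordinals via the function $o_0$, and then over to finite multisets via Cantor normal form. First I would use the isomorphism property of the functions $o_n$ (in the case $n=0$, over all $k\le\alpha$; for $\alpha=\omega$ one passes to the union, noting that $\word_\omega^N$ has order type $\varepsilon_0=\omega^{\varepsilon_0}=\omega^{\omega_\omega}$): thus $o_0$ is an order isomorphism of $(\word_\alpha^N,\wor)$ onto $(\omega^{\omega_\alpha},<)$, where $\omega^{\omega_\alpha}=\omega_{\alpha+1}$ for finite $\alpha$. Composing with Cantor normal form, I send each word $\wfont{A}$ to the finite multiset $\Phi(\wfont{A})$ of exponents occurring in the CNF of $o_0(\wfont{A})$. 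Since $\omega_\alpha$ is closed under $\psi$ for every $\alpha\in[2,\omega]$ (Remark \ref{psi_closed}) and every ordinal below $\omega^{\omega_\alpha}$ has all its CNF-exponents below $\omega_\alpha$, the map $\Phi$ is a bijection of $\word_\alpha^N$ onto the entire multiset sort $\multi(\omega_\alpha)$ of $\WMSExt{(\omega_\alpha,<,\psi)}$. This $\Phi$ will be the interpreting injection: the domain formula is taken to be $\top$, equality of words becomes ordinary equality of multisets, and $\varLambda$ is interpreted as the empty multiset (definable as ``having no member'').

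Next I would spell out the translations of $\wor,\romb{0},\romb{1},\romb{2}$ as operations on multisets. Comparison of ordinals by CNF is a lexicographic comparison of the descending exponent sequences, which in multiset language reads: $\Phi(\wfont{A})$ codes a $\wor$-smaller word than $\Phi(\wfont{B})$ iff there is an ordinal $\delta$ at which $\Phi(\wfont{A})$ has strictly smaller multiplicity than $\Phi(\wfont{B})$ while the two have equal multiplicity at every $\gamma>\delta$; this is a formula in $<,\in_M,\subset_M$ using the predicates $\ffont{CLess}$ and $\ffont{CEq}$ of Lemma \ref{Multi_defenitions}. For the operators, a computation from the recursive definitions of the $o_n$, of $\psi$, and of normal forms shows that appending $n$ and renormalizing corresponds, under $\Phi$, to: for $n=0$, adjoining one copy of the ordinal $0$; for $n=1$, deleting every copy of $\min\Phi(\wfont{A})$ and adjoining one copy of $\min\Phi(\wfont{A})+1$ (with $\Phi(\romb{1}\varLambda)$ equal to the singleton $\{1\}$); for $n=2$, deleting every element that is $<\psi(\min\Phi(\wfont{A}))$ and adjoining one copy of $\psi(\min\Phi(\wfont{A}))$ (with $\Phi(\romb{2}\varLambda)$ equal to the singleton $\{\omega\}=\{\psi(0)\}$). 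Each of these is definable in $\WMSExt{(\omega_\alpha,<,\psi)}$ from the already-established definability of $0$ and of the successor function (Lemma \ref{Uni_definitions}), of $\min$ on the multiset sort (shown definable above), of the multiplicity predicates $\ffont{CS}$ and $\ffont{CEq}$ (Lemma \ref{Multi_defenitions}), and --- for $\romb{2}$ --- of the function $\psi$, which is a primitive of the structure; note that $\min\Phi(\wfont{A})<\omega_\alpha$, hence $\psi(\min\Phi(\wfont{A}))<\omega_\alpha$ by $\psi$-closedness, so $\psi$ is never applied outside $\omega_\alpha$.

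The main obstacle is exactly this computation: determining the precise effect of $\wfont{A}\mapsto\romb{n}\wfont{A}$ on $o_0$-values, with care for the empty word, for words that do and do not contain the symbol $0$, and for the absorption that happens inside a CNF when the new least exponent overtakes its left neighbour. The clean route is to isolate the identities $o_1(\romb{1}\wfont{A})=o_1(\wfont{A})+1$ and $o_1(\romb{2}\wfont{A})=\psi(o_1(\wfont{A}))$ for $\wfont{A}\in\sfont{S}_1\cap\NF$ --- which follow from the recursive shape of the $o_n$ (appending the least admissible symbol acts as the successor, appending the next one as $\psi$, one level deeper each time), equivalently from the symbol-shift isomorphism between $\sfont{S}_1\cap\word_\alpha^N$ and $\sfont{S}_0\cap\word_{\alpha-1}^N$ --- and then apply them to the pieces of the decomposition of $\wfont{A}$ at its outermost occurrences of the symbol $0$. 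With those in place, substituting the resulting multiset operations into the formulas above and checking their expressibility in $\WMSExt{(\omega_\alpha,<,\psi)}$ completes the interpretation. Incidentally, the analysis also explains why the signature stops at $\romb{2}$: interpreting $\romb{n}$ for $n\ge3$ would require applying $\psi$ at CNF-depth at least $2$, which the structure $\WMSExt{(\omega_\alpha,<,\psi)}$ does not offer.
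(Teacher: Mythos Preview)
Your proposal is correct and coincides with the paper's argument. The paper builds the very same bijection, only phrased through the outermost $0$-decomposition and $o_1$ rather than through $o_0$ and Cantor normal form: writing $\wfont{A}=\wfont{A}_10\ldots0\wfont{A}_n$ with $\wfont{A}_i\in\sfont{S}_1\cap\NF$ and sending $\wfont{A}$ to the multiset of the $o_1(\wfont{A}_i)$ is exactly your $\Phi$, since $o_0(\wfont{A})=\sum_i\omega^{o_1(\wfont{A}_i)}$ is already in CNF. The paper's formulas for $\wor^I$, $\romb{1}^I$, $\romb{2}^I$ match yours verbatim (via $\ffont{CLess}$, $\ffont{CEq}$, $\ffont{CS}$, $\min$, $S$, and $\psi$), and it records the same key identity $o_1(\romb{2}\wfont{B})=\psi(o_1(\wfont{B}))$ for $\wfont{B}\in\sfont{S}_1\cap\NF$; the only cosmetic difference is that the paper reads off $\varLambda$ and $\romb{0}$ from $\wor$ (least element and order-successor) rather than describing them multiset-wise as you do.
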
 
\sprv We note two facts. From Remark \ref{psi_closed} it follows that $\omega_\alpha$ is closed under $\psi$. The function $o_1$ is a bijection from $\word_\alpha^N\cap \sfont{S}_1$ to $\omega_\alpha$.

We consider a word $\wfont{A}\in \word^N_\alpha$ and give it's interpretation $\wfont{A}^I$. We can represent in the unique way the word $\wfont{A}$ in the form $\wfont{A}_10\ldots 0\wfont{A}_n$, where $n\ge 0$ and $\wfont{A}_1,\ldots,\wfont{A}_n\in \word_\alpha^N\cap \sfont{S}_1$. We put the multiplicity of $\gamma\in \omega_\alpha$ in $\wfont{A}^I$ to be equal to the number of $i$ from $1$ to $n$ such that $o_1(\wfont{A}_i)=\gamma$. Obviously, we have defined a bijection $\wfont{A}\mapsto \wfont{A}^I$ from $\word_\alpha^N$ to $\multi(\omega_\alpha)$.

We define a predicate $\wor^I$ the interpretation of the  predicate $\wor$:
$$\sfont{X}\wor^I\sfont{Y} \rightleftharpoons \exists x (\ffont{CLess}(x,\sfont{X},\sfont{Y})\&\forall y>x (\ffont{CEq}(y,\sfont{X},\sfont{Y}))).$$

Let us prove that for words $\wfont{A},\wfont{B}\in\word^N_{\alpha}$ we have 
 $$\wfont{A}\wor \wfont{B} \iff \wfont{A}^I \wor^I \wfont{B}^I.$$ 
We find $\wfont{A}_1,\ldots,\wfont{A}_n,\wfont{B}_1,\ldots,\wfont{B}_m\in \sfont{S}_1$ such that  $\wfont{A}$ is equal to $\wfont{A}_10\wfont{A}_20\ldots0\wfont{A}_n$ and $\wfont{B}$ is equal to $\wfont{B}_10\wfont{B}_20\ldots 0\wfont{B}_m$. We denote by $\sfont{A}$  the interpretation $\wfont{A}^I$ and we denote by $\sfont{B}$ the interpretation $\wfont{B}^I$. Let us prove that
 $$\wfont{A}_10\wfont{A}_20\ldots0\wfont{A}_n\wor \wfont{B}_10\wfont{B}_20\ldots 0\wfont{B}_m \iff \WMSExt{(\omega_\alpha,<,\psi)}\models \sfont{A} \wor^I \sfont{B}.$$

Suppose we have $\WMSExt{(\omega_\alpha,<,\psi)}\models \sfont{A}\wor^I \sfont{B}$. Let us prove that $\wfont{A}_10\wfont{A}_20\ldots0\wfont{A}_n\wor \wfont{B}_10\wfont{B}_20\ldots 0\wfont{B}_m$. There exists an ordinal $\gamma$ such that the multiplicity of  $\gamma$ in $\sfont{A}$ is less than the multiplicity of $\gamma$ in $\sfont{B}$ and for all $\delta\in(\gamma,\omega_\alpha)$ the multiplicity of $\delta$ in $\sfont{A}$ and  the multiplicity of $\delta$ in $\sfont{B}$ are equal. Suppose the multiplicity of $\gamma$ in $\sfont{A}$ is equal to $l$. Suppose $k$ is the $(l+1)$-th element of $\{i\mid o_1(\wfont{B}_{i})=\gamma\}$ in the sense of standard ordering of natural numbers; note that from definition of $\wor^I$ it follows that we can find such a number $k$. Then from the definition of $\NF$ it follows that for all $i$ from $1$ to $k-1$ we have $\wfont{A}_i=\wfont{B}_i$. We have either $n=k-1$ or $\wfont{A}_k\wor\wfont{B}_k$. Thus the sequence $(\wfont{A}_1,\ldots,\wfont{A}_n)$ is lexicographically less than  $(\wfont{B}_1,\ldots,\wfont{B}_m)$ and $\wfont{A}_10\ldots0\wfont{A}_n\wor \wfont{B}_10\ldots0\wfont{B}_m$. 

Now we assume that  $\wfont{A}_10\ldots0\wfont{A}_n\wor \wfont{B}_10\ldots0\wfont{B}_m$. Let us show that $\WMSExt{(\omega_\alpha,<,\psi)}\models \sfont{A}\wor^I \sfont{B}$. From the definitions of $\wore$ and $\NF$ it follows that there exists $k$ such that for all $i$ from $1$ to $k-1$ we have $\wfont{A}_i=\wfont{B}_i$ and either $n=k-1$ or $\wfont{A}_k\wor \wfont{B}_k$. From the late it follows that for all $i$ from $k$ to $n$ we have $\wfont{A}_i\wor \wfont{B}_k$, and hence $o_1(\wfont{A}_i)<o_1(\wfont{B}_k)$. We take $o_1(\wfont{B}_k)$ as  $x$ from the definition of $\wor^I$, hence $\WMSExt{(\omega_\alpha,<)}\models \sfont{A}\wor^I \sfont{B}$. Thus $\wor^I$ is the interpretation of $\wor$.

The function $\romb{0}$ and the element $\varLambda$ is definable in $(\word^N_\alpha,\wor)$ and we obtain  interpretations of $\romb{0}$ and $\varLambda$ for free.

Note that for a word $\wfont{A}\in \NF\cap \sfont{S}_1$ we have  $\psi(o_1(\wfont{A}))=o_1(\romb{2}\wfont{A})$.

We define the functions $\romb{1}^I$ and $\romb{2}^I$ that will be the interpretations of $\romb{1}$ and $\romb{2}$, respectively:
$$\begin{aligned}\romb{1}^I \sfont{X}=\sfont{Y} \rightleftharpoons (\sfont{X}=\emptyset^M & \to \sfont{Y}=\emptyset^M) \& (\sfont{X}\ne \emptyset^M \to \\ &\forall x>S(\min(\sfont{X}))(\ffont{CEq}(x,\sfont{X},\sfont{Y})) \& \\ & \ffont{CS}(S(\min(\sfont{X})),\sfont{X},\sfont{Y})\&\\ &\forall x<S(\min(\sfont{X}))(\lnot x\in_M \sfont{Y})),\\\end{aligned}$$
$$\begin{aligned}\romb{2}^I \sfont{X}=\sfont{Y} \rightleftharpoons (\sfont{X}=\emptyset^M & \to \sfont{Y}=\emptyset^M) \& (\sfont{X}\ne \emptyset^M \to \\ &\forall x>\psi(\min(\sfont{X}))(\ffont{CEq}(x,\sfont{X},\sfont{Y})) \& \\ & \ffont{CS}(\psi(\min(\sfont{X})),\sfont{X},\sfont{Y})\& \\ & \forall x<\psi(\min(\sfont{X}))(\lnot x\in_M \sfont{Y})).\\\end{aligned}$$
We claim that $\romb{2}^I$ is an interpretation of $\romb{2}$; we omit the proof of the fact that $\romb{1}^I$ is an interpretation of $\romb{1}$, because it is similar to our claim. We consider a word $\wfont{A}\in \word_\alpha^N$ of the form $\wfont{A}_10\ldots0\wfont{A}_n$, where $\wfont{A}_1,\ldots,\wfont{A}_n\in \word_\alpha^N\cap \sfont{S}_1$. From Lemma \ref{maximal_lexicographic_algorithm} it follows that the lexicographically maximal subsequence of the sequence $(\wfont{A}_1,\ldots,\wfont{A}_{n-1},\wfont{A}_n2)$ is of the form $(\wfont{A}_1,\ldots,\wfont{A}_k,\wfont{A}_n2)$, where $k\in \{0,\ldots,n-1\}$. And, for all $i$ from $1$ to $k$, we have $\wfont{A}_n2\wore \wfont{A}_i$, hence, for all $i$ from $k+1$ to $n-1$, we have $\wfont{A}_i\wor \wfont{A}_n2$. Therefore $\romb{2}^I$ is an interpretation of $\romb{2}$.\eprv

\begin{lemma} \label{WMSO_WMMSO_interpretability}Suppose an ordinal $\alpha$  is closed under $\psi$. Then $\WMSExt{(\alpha,<,\psi)}$ is interpretable in $\WSExt{(\alpha,<,\psi)}$.
\end{lemma}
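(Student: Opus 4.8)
The plan is to code finite multisets of ordinals below $\alpha$ by finite \emph{sets} of ordinals below $\alpha$, using the partition of $\alpha$ into consecutive blocks of order type $\omega$. Since $\alpha$ is closed under $\psi$, Remark \ref{psi_closed} lets me assume $\alpha=\omega^{\omega\cdot\beta}$ for some $\beta>0$ (if $\alpha=0$ there is nothing to prove); then $\alpha$ is a limit ordinal and $\omega\cdot\alpha=\alpha$, so every $\xi<\alpha$ has a unique representation $\xi=\omega\cdot\eta+m$ with $\eta<\alpha$ and $m<\omega$. Call $\omega\cdot\eta$ the \emph{block} of $\xi$, and let $\Omega$ be the set of block minima. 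By Lemma \ref{Uni_definitions} the element $0$, the class $\Lim$ and the relation $\ffont{FinDif}$ are definable in $(\alpha,<)$; two ordinals lie in a common block exactly when they are $\ffont{FinDif}$-related, $\Omega=\{0\}\cup(\Lim\cap\alpha)$ is therefore definable, and $(\Omega,<)$ has order type $\alpha$ via the isomorphism $j\colon\eta\longmapsto\omega\cdot\eta$.

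First I would interpret the ordinal sort of $\WMSExt{(\alpha,<,\psi)}$ by $\Omega$, taking $j$ as the injection witnessing the interpretation and $x=0\lor x\in\Lim$ as the domain formula; the order $<$ is then interpreted by $<$ itself. For $\psi$, a comparison of Cantor normal forms gives $\psi(\omega\cdot\eta)=\omega\cdot\psi(\eta)$ whenever $\eta\ge 1$, so on $\Omega\setminus\{0\}$ the function $\psi$ already transports correctly along $j$; the sole discrepancy is at $\eta=0$, where $\psi(0)=\omega=j(1)$ while we need $j(\psi(0))=j(\omega)=\omega^{2}$. Accordingly I would interpret $\psi$ by the definable function on $\Omega$ that sends $0$ to $\psi(\psi(0))=\omega^{2}$ and every nonzero $\xi\in\Omega$ to $\psi(\xi)$; by the computation above this function maps $\Omega$ into $\Omega$ and is $j$-conjugate to $\psi$ on $\alpha$.

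For the multiset sort I would use the injection $\mathrm{enc}\colon M\longmapsto\{\,\omega\cdot\gamma+m : \gamma<\alpha,\ 0\le m<\mathfrak{i}_M(\gamma)\,\}$, so that the block of $\omega\cdot\gamma$ carries an initial segment whose length is the multiplicity of $\gamma$ in $M$. Then $\mathrm{enc}$ is a bijection from $\multi(\alpha)$ onto the family of \emph{valid} finite sets, i.e.\ those whose intersection with each block is an initial segment of that block; validity is expressed by $\forall\zeta\,\forall\zeta'(\ffont{FinDif}(\zeta,\zeta')\ \&\ \zeta<\zeta'\ \&\ \zeta'\in\sfont{X}\to\zeta\in\sfont{X})$, which I would use as the domain formula. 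Under this coding $\gamma\in_M M$ holds iff the block minimum $\omega\cdot\gamma=j(\gamma)$ lies in $\mathrm{enc}(M)$, so $\in_M$ is interpreted simply by $x\in\sfont{X}$; and $M_1\subset_M M_2$ holds iff $\mathrm{enc}(M_1)\subseteq\mathrm{enc}(M_2)$, since for valid sets block-wise comparison of initial segments coincides with inclusion, so $\subset_M$ is interpreted by $\forall x(x\in\sfont{X}\to x\in\sfont{Y})$. It remains to check that $j$ together with $\mathrm{enc}$ is an isomorphism of $\WMSExt{(\alpha,<,\psi)}$ onto the structure thus defined, which is routine.

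The only delicate point is the behaviour of $\psi$ on the block minima: establishing $\psi(\omega\cdot\eta)=\omega\cdot\psi(\eta)$ for $\eta\ge 1$ from the Cantor normal forms and isolating the single exception at $\eta=0$ is exactly what makes the interpretation of $\psi$ go through. The remaining ingredients — the block combinatorics and the definability of validity, of $\in_M$ and of $\subset_M$ — are straightforward.
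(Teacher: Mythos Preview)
Your proof is correct and follows essentially the same strategy as the paper: interpret ordinals by $\beta\mapsto\omega\cdot\beta$ (with the same special case $\psi^I(0)=\psi(\psi(0))$, justified by the identity $\psi(\omega\cdot\eta)=\omega\cdot\psi(\eta)$ for $\eta\ge 1$), and use the resulting $\omega$-blocks to store multiplicities. The only difference is cosmetic: the paper encodes multiplicity $k$ of $\gamma$ by the single element $\omega\cdot\gamma+(k-1)$ (so valid codes have at most one element per block, and $\in_M$, $\subset_M$ are recovered via $\ffont{FinDif}$ and $\le$), whereas you encode it by the initial segment $\{\omega\cdot\gamma,\ldots,\omega\cdot\gamma+(k-1)\}$, which makes $\in_M$ and $\subset_M$ literally $\in$ and $\subseteq$ at the cost of a slightly longer domain formula.
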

\sprv We will interpret an ordinal $\beta\in\alpha$ by the ordinal $\beta^I=\omega\cdot \beta$. Clearly, we have define an injection of $\alpha$ into itself. For a given set $\sfont{A}\in \multi( \alpha)$  we build  $\sfont{A}^I\in \pfin(\alpha)$ the interpretation of $\sfont{A}$. Suppose $\beta_1,\ldots,\beta_n$ are pairwise different ordinals below $\alpha$ such that every ordinal that have non-zero multiplicity in $\sfont{A}$ is some $\beta_i$. We denote by $k_1,\ldots,k_n\in \omega$ the multiplicities of the ordinals $\beta_1,\ldots,\beta_n$ in $\sfont{A}$, respectively. We put $\sfont{A}^I=\{\omega\cdot \beta_i+(k_i-1)\mid i\in \{1,\ldots,n\}\}$. 

Note that the mapping $(\beta,k)\longmapsto \omega \cdot \beta + (k-1)$ is a bijection between $\alpha\times (\omega\setminus \{0\})$ and $\alpha$. Thus the mapping $\sfont{A}\longmapsto \sfont{A}^I$ is an injection. Let us show that the set  $\sfont{U}_1$ of all interpretations of ordinals is definable in $\WSExt{(\alpha,<,\psi)}$. Really, for every $\beta\in \alpha$ 
$$\beta\in \sfont{U}_1\iff \beta \in \Lim\lor \beta=0.$$
Now we show that the set $\sfont{U}_2$ of all interpretations of multisets is definable $\WSExt{(\alpha,<,\psi)}$. For every $\sfont{A}\in \pfin(\alpha)$ we have
$$\sfont{A}\in \sfont{U}_2\iff \forall \beta \in \alpha \forall \gamma \in \alpha ((\beta \in \sfont{X} \& \gamma \in \sfont{X})\to (\ffont{FinDif}(\beta,\gamma)\leftrightarrow \beta=\gamma)).$$

We give $\in_M^I$, $\subset_M^I$, $<^I$, $\psi^I$ the interpretations of $\in_M$, $\subset_M$, $<$, $\psi$, correspondingly.

$$x\in_M^I \sfont{X}\rightleftharpoons x\in \sfont{U}_1 \& \sfont{X}\in \sfont{U}_2 \& \exists y \in \sfont{X} (\ffont{FinDif}(x,y));$$
$$x<^Iy\rightleftharpoons x\in \sfont{U}_1\&y\in \sfont{U}_1\&x<y;$$
$$\psi^I(x)=y\rightleftharpoons x\in \sfont{U}_1\&y\in \sfont{U}_1\&(x=0\to y=\psi(\psi(0)))\& (x\ne 0 \to y=\psi(x));$$
$$\sfont{X}\subset_M^I \sfont{Y}\rightleftharpoons \sfont{X},\sfont{Y}\in \sfont{U}_2\&\forall x\in \sfont{X} \exists y\in \sfont{Y} (\ffont{FinDif}(x,y)\& x\le y).$$

Clearly, the definitions give us the required interpretation.\eprv

There is the standard choice of cofinal sequences for ordinals less than $\varepsilon_0$. For every ordinal $\alpha\in\Lim$ with the Cantor normal form $\omega^{\alpha_1}+\ldots+\omega^{\alpha_k}$,  $\alpha[n]$ the $n$-th member of the standard cofinal sequence for $\alpha$ is given as following:
\begin{enumerate} 
\item $\alpha[n]=\omega^{\alpha_1}+\ldots+\omega^{\alpha_{k-1}}+\omega^{\beta}(n+1)$ if $\alpha_k\not\in\Lim$ and $\alpha_k=\beta+1$;
\item $\alpha[n]=\omega^{\alpha_1}+\ldots+\omega^{\alpha_{k-1}}+\omega^{\alpha_k[n]}$ if $\alpha_k\in\Lim$.
\end{enumerate}
With the use of cofinal sequences we define the relation $\ffont{R}$ on ordinals less than $\varepsilon_0$:
$$\alpha \ffont{R}\beta\defiff \beta=\alpha+1\lor \exists n\in \omega (\alpha=\beta[n]).$$
Clearly, the transitive closure of $\ffont{R}$ is the standard order on ordinals $<$.

Laurent~Braud \cite{Brau09} have proved the following theorem:
\begin{theorem} \label{Braud_theorem}For all $\alpha\in[1,\varepsilon_0)$, the theory  $\Th(\WSExt{(\alpha,\ffont{R})})$ is decidable.
\end{theorem}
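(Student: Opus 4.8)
The plan is to recognise $\Th(\WSExt{(\alpha,\ffont{R})})$ as the weak monadic second-order theory of the single binary relation structure $(\alpha,\ffont{R})$ — which is exactly how $\WSExt{\cdot}$ was set up, since it adjoins the finite subsets of $\alpha$ together with $\in$ — and then to establish decidability of that weak MSO theory by representing the ordinals below $\varepsilon_0$ as finite labelled trees via iterated Cantor normal form and transferring the decision problem to an ambient structure whose weak MSO theory is already known to be decidable.

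First I would fix the coding: represent each $\beta<\varepsilon_0$ by a finite tree $t_\beta$ whose root has as children the trees of the exponents $\beta_1\ge\dots\ge\beta_k$ in the Cantor normal form $\beta=\omega^{\beta_1}+\dots+\omega^{\beta_k}$, with $0$ coded by the single leaf. Under this coding the order $<$ is recognisable by a finite bottom-up tree automaton (the usual recursive lexicographic comparison of Cantor normal forms), and so is $\ffont{R}$: its successor part is the operation ``append a $0$-leaf as the last child'', and its fundamental-sequence part is read off directly from the two defining clauses — clause (1) asks that the last child be a successor tree and then inserts a bounded number of copies of a subtree, while clause (2) recurses into the last child. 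The decisive observation is that each application of $\ffont{R}$ modifies $t_\beta$ only along its rightmost branch, so the whole family $\{\beta[n]\mid n\in\omega\}$ is produced by iterating one fixed local rewriting rule along that branch; hence $\ffont{R}$ is definable over the tree structure in a way that a tree automaton can track.

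Next I would invoke a decidable ambient theory in one of two ways. One route is to note that $(\alpha,\ffont{R})$, under this coding, is a tree-automatic structure, so its weak MSO theory reduces to the weak MSO theory of the complete $k$-ary tree, which is decidable by the Rabin / Doner–Thatcher–Wright theorem. A second, more self-contained route is a Feferman–Vaught / composition argument in the spirit of the Tarski–Mostowski treatment of $\Th(\alpha,<)$: stratify $\alpha$ by the value of the leading exponent, express the weak MSO type of $(\beta,\ffont{R})$ restricted to $\beta$ in terms of the weak MSO types of the summands $\omega^{\beta_i}$ and of the exponents $\beta_i$, and close the recursion by induction on ordinal height below $\varepsilon_0$, using that at each level only finitely many weak MSO types of any fixed quantifier rank occur. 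Either way one extracts an algorithm deciding $\Th(\WSExt{(\alpha,\ffont{R})})$.

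The main obstacle is the fundamental-sequence relation, precisely because of its two sources of unboundedness: the factor $n+1$ in clause (1), which makes $\{\beta[n]\mid n\in\omega\}$ genuinely infinite, and the nested recursion in clause (2), which pushes the relevant rewriting arbitrarily deep along the rightmost branch as trees grow. The real work is checking that neither escapes the chosen framework — that the ``$n+1$ copies'' can be simulated by the automaton's power to quantify over finite sets of positions, and that the depth of recursion is bounded by the tree size so the rewriting stays finite-state. This is exactly the content of Braud's construction \cite{Brau09}; once it is in place, the transfer of decidability through the coding is routine.
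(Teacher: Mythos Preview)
The paper does not prove this theorem: it is stated as a result of L.~Braud, cited to \cite{Brau09}, and then used as a black box in the chain of interpretations leading to Theorem~\ref{weak_decidability_theorem}. There is therefore nothing in the paper to compare your argument against; your sketch already goes well beyond what the paper itself does with this statement, and you acknowledge in your final paragraph that the substantive construction is Braud's.

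As a reconstruction of Braud's method your outline is reasonable. One point worth tightening if you ever write it out in full: calling $(\alpha,\ffont{R})$ \emph{tree-automatic} would, on its own, only yield decidability of the \emph{first-order} theory; for the weak monadic second-order conclusion you need the stronger fact that the Cantor-normal-form coding gives a weak-MSO interpretation of $(\alpha,\ffont{R})$ inside the infinite tree, so that set quantifiers over $\alpha$ translate to set quantifiers over tree nodes. Your sentence ``its weak MSO theory reduces to the weak MSO theory of the complete $k$-ary tree'' is exactly the right target claim, but it is not an automatic consequence of tree-automaticity, and the two notions are run together in your first route.
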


\begin{lemma}\label{psi_interpretability} The model $\WSExt{(\alpha,<,\psi)}$ is interpretable in the model $\WSExt{(\alpha,\ffont{R})}$. 
\end{lemma}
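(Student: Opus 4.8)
The plan is to build an interpretation in which the two sorts of $\WSExt{(\alpha,<,\psi)}$ (ordinals below $\alpha$, finite sets of such ordinals) and the membership predicate $\in$ are carried over unchanged — the identity maps on both domains, with trivial domain formulas $x=x$ and $\sfont{X}=\sfont{X}$, and $\in^{I}\rightleftharpoons{\in}$. So everything reduces to producing, inside $\WSExt{(\alpha,\ffont{R})}$, defining formulas for the order $<$ and the function $\psi$.

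First I would show that $<$ is the transitive closure of $\ffont{R}$. Each $\ffont{R}$-step is strictly increasing ($\alpha\;\ffont{R}\;\beta$ means $\beta=\alpha+1$ or $\alpha=\beta[n]<\beta$), and conversely, by induction on $\delta$ using that the standard fundamental sequences are strictly increasing and cofinal, for every $\gamma<\delta$ there is a finite $\ffont{R}$-path from $\gamma$ to $\delta$. Since we are in weak monadic second-order logic (the structure has the finite-set sort), this transitive closure is definable: $\gamma<\delta$ iff there is a finite set $P$ with $\gamma,\delta\in P$ in which $\gamma$ has an $\ffont{R}$-successor lying in $P$ and every element of $P$ other than $\delta$ has an $\ffont{R}$-successor lying in $P$; acyclicity of $\ffont{R}$ (it is contained in a strict order) forces any such $P$ to encode a genuine $\ffont{R}$-path from $\gamma$ to $\delta$. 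Having $<$ available, Lemma \ref{Uni_definitions} supplies $0$, the successor function, the class $\Lim$, and $\ffont{FinDif}$ — in fact $\Lim$ can also be read off $\ffont{R}$ directly, since a limit ordinal has its whole fundamental sequence as $\ffont{R}$-predecessors, a successor has exactly one $\ffont{R}$-predecessor, and $0$ has none.

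The heart of the argument is the definition of $\psi$, for which the key claim is
$$\psi(\gamma)=\begin{cases}\omega,&\gamma=0,\\[2pt]\min\{\lambda\in\Lim\mid \gamma\;\ffont{R}\;\lambda\},&\gamma>0,\end{cases}$$
with the minimum taken with respect to $<$; here $\omega$ is definable as the least element of $\Lim$, and for $\lambda\in\Lim$ the condition $\gamma\;\ffont{R}\;\lambda$ says precisely that $\gamma$ occurs in the standard fundamental sequence of $\lambda$. To verify this I would compare Cantor normal forms: writing $\gamma=\delta+\omega^{\xi}$ with $\omega^{\xi}$ the last term of the Cantor normal form of $\gamma$, one has $\psi(\gamma)=\delta+\omega^{\xi+1}$, and a direct computation from the clauses defining the standard fundamental sequences gives $\psi(\gamma)[n]=\delta+\omega^{\xi}\cdot(n+1)$, whence $\gamma=\psi(\gamma)[0]$ and so $\gamma\;\ffont{R}\;\psi(\gamma)$; conversely, analysing whether the last Cantor exponent of a candidate limit $\lambda$ is $0$, a successor, or a limit, one checks that any $\lambda\in\Lim$ with $\gamma$ on its fundamental sequence satisfies $\lambda\ge\psi(\gamma)$. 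This turns the boxed equation into an explicit defining formula for $\psi$ in the language of $\WSExt{(\alpha,\ffont{R})}$, completing the interpretation. (For the ordinals $\alpha$ to which the lemma is applied — below $\varepsilon_0$ and closed under $\psi$ — the minimum exists below $\alpha$, so the interpreted $\psi$ is a total function on the first sort.)

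The routine parts are the reduction of $<$ to the transitive closure of $\ffont{R}$ and the packaging of the interpretation; I expect the genuine work to be the verification of the boxed characterisation of $\psi$, which demands a somewhat fiddly case distinction on Cantor normal forms and on the three clauses of the definition of the standard fundamental sequences — in particular the behaviour when $\gamma$ is itself a finite or successor ordinal differs from the case when $\gamma$ is a limit, and one must rule out spurious smaller limits $\lambda$ having $\gamma$ on their fundamental sequence.
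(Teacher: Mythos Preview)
Your approach coincides with the paper's: carry the two sorts and $\in$ over by the identity, recover $<$ from $\ffont{R}$ (the paper silently passes to $\WSExt{(\omega_\alpha,<,\ffont{R})}$, tacitly using that $<$ is the transitive closure of $\ffont{R}$, which you make explicit via a finite-set path argument), and then characterise $\psi(\beta)$ for $\beta\ne 0$ via its $\ffont{R}$-successors. The paper's formulation is that $\psi(\beta)$ is the \emph{second} $\gamma$ with $\beta\,\ffont{R}\,\gamma$ (the first being $\beta+1$), which is equivalent to your $\min\{\lambda\in\Lim\mid\beta\,\ffont{R}\,\lambda\}$.

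One small slip to fix: your computation $\psi(\gamma)[n]=\delta+\omega^{\xi}(n+1)$, hence $\gamma=\psi(\gamma)[0]$, is only correct when $\delta+\omega^{\xi+1}$ is already in Cantor normal form, i.e.\ when the last exponent $\xi$ occurs just once in $\gamma$. For instance $\gamma=\omega\cdot 2$ gives $\psi(\gamma)=\omega+\omega^{2}=\omega^{2}$ and $\psi(\gamma)[0]=\omega\ne\gamma$. The correct statement --- and this is exactly how the paper handles it --- groups the tail of equal exponents, writing $\gamma=\omega^{\beta_1}+\cdots+\omega^{\beta_{k-1}}+\omega^{\beta_k}\cdot m$ with $\beta_{k-1}>\beta_k$; then $\psi(\gamma)=\omega^{\beta_1}+\cdots+\omega^{\beta_{k-1}}+\omega^{\beta_k+1}$ is in CNF and $\psi(\gamma)[m-1]=\gamma$, so $\gamma\,\ffont{R}\,\psi(\gamma)$ as needed.
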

\begin{proof} We only need to show that $\psi$ is definable in $\WSExt{(\omega_\alpha,<,\ffont{R})}$. Suppose $\beta\in \omega_\alpha$ is a non-zero ordinal. Let us show that $\psi(\beta)$ is the second ordinal  $\gamma$ such that $\beta \ffont{R}\gamma$. Suppose the Cantor normal form  of $\beta$ is $\omega^{\beta_1}+\ldots +\omega^{\beta_{k-1}}+\underbrace{\omega^{\beta_k}+\ldots+\omega^{\beta_k}}_{\mbox{$n$ times}}$, where $k,n\ge 1$ and $\beta_1\ge\beta_2\ge\ldots\ge\beta_{k-1}>\beta_k$. Clearly, $\psi(\beta)= \omega^{\beta_1}+\ldots +\omega^{\beta_{k-1}}+\omega^{\beta_{k}+1}$. Obviously, $\beta \ffont{R}\psi(\beta)$ and $\beta \ffont{R}(\beta+1)$. Let us prove by a contradiction that for all $\gamma\in(\beta+1,\psi(\beta))$ we don't have $\beta \ffont{R}\gamma$. Suppose $\gamma\in(\beta+1,\psi(\beta))$ and $\beta \ffont{R}\gamma$. Then the Cantor normal form of $\gamma$ is $\omega^{\beta_1}+\ldots +\omega^{\beta_{k-1}}+\underbrace{\omega^{\beta_k}+\ldots+\omega^{\beta_k}}_{\mbox{$n$ times}}+\omega^{\gamma_1}+\ldots+\omega^{\gamma_s}$, where  $s\ge 1$ and $\beta_k>\gamma_1$. From the definition of $\ffont{R}$ it follows that $\gamma\in\Lim$ and for some $n$ we have $\gamma[n]=\beta$. But $\gamma[n]=\omega^{\beta_1}+\ldots +\omega^{\beta_{k-1}}+\underbrace{\omega^{\beta_k}+\ldots+\omega^{\beta_k}}_{\mbox{$n$ times}}+\omega^{\gamma_1}+\ldots+\omega^{\gamma_{s-1}}+(\omega^{\gamma_s})[n]$ and $(\omega^{\gamma_s})[n]\ne 0$. Thus $\gamma[n]>\beta$. The late contradicts $\beta \ffont{R}\gamma$. Hence $\psi(\beta)$ is really the second  $\gamma$ such that $\beta \ffont{R}\gamma$.

From the previous paragraph it follows that, for all $\beta,\gamma<\omega_\alpha$, we have $\psi(\beta)=\gamma$ iff
$$\begin{aligned}
(\beta=0\to \gamma\in \Lim \& \forall \delta<\gamma (\delta\not\in \Lim))\&\\
(\beta\ne 0\to \beta \ffont{R}\gamma \& \exists! \delta<\gamma(\beta \ffont{R}\delta))\\
\end{aligned}$$
Hence the function $\psi$ is definable in $\WSExt{(\omega_\alpha,<,\ffont{R})}$.\end{proof}

 Using Lemmas \ref{WMMSO_words_interpretability}, \ref{WMSO_WMMSO_interpretability}, \ref{psi_interpretability}, and Theorem \ref{Braud_theorem} we conclude that the following theorem holds:
\begin{theorem} \label{weak_decidability_theorem}For all $\alpha\in[2,\omega)$, the theory $\Th(\word^N_\alpha,\wor,\varLambda,\romb{0},\romb{1},\romb{2})$ is decidable.
\end{theorem}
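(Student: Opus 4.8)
The plan is to combine the three interpretation lemmas of this section with Braud's Theorem \ref{Braud_theorem}, using that interpretability of models composes and transfers decidability of the elementary theory from the interpreting model to the interpreted one. So fix $\alpha\in[2,\omega)$. First I would check that the parameter hypotheses are met for the ordinal $\omega_\alpha$: the ordinal $\omega_\alpha$ is closed under $\psi$ (as already observed in the proof of Lemma \ref{WMMSO_words_interpretability} via Remark \ref{psi_closed}), and, since $\alpha<\omega$, we have $\omega_\alpha<\omega_\omega=\varepsilon_0$, so $\omega_\alpha\in[1,\varepsilon_0)$ and Theorem \ref{Braud_theorem} applies to $\WSExt{(\omega_\alpha,\ffont{R})}$.

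Next I would assemble the chain of interpretations of many-sorted models. By Lemma \ref{WMMSO_words_interpretability}, the model $(\word^N_\alpha,\wor,\varLambda,\romb{0},\romb{1},\romb{2})$ is interpretable in $\WMSExt{(\omega_\alpha,<,\psi)}$; by Lemma \ref{WMSO_WMMSO_interpretability}, applied with the ordinal $\omega_\alpha$ (admissible since $\omega_\alpha$ is closed under $\psi$), the model $\WMSExt{(\omega_\alpha,<,\psi)}$ is interpretable in $\WSExt{(\omega_\alpha,<,\psi)}$; and by Lemma \ref{psi_interpretability}, again with the ordinal $\omega_\alpha$, the model $\WSExt{(\omega_\alpha,<,\psi)}$ is interpretable in $\WSExt{(\omega_\alpha,\ffont{R})}$. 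A composition of interpretations --- in the sense fixed just before Lemma \ref{WMMSO_words_interpretability}, i.e.\ a choice of sorts, injections on domains, domain formulas, and defining formulas for every symbol --- is again an interpretation; hence $(\word^N_\alpha,\wor,\varLambda,\romb{0},\romb{1},\romb{2})$ is interpretable in $\WSExt{(\omega_\alpha,\ffont{R})}$, so composing the three translations yields a computable map $\varphi\mapsto\varphi^{*}$ on sentences of the signature $(\wor,\varLambda,\romb{0},\romb{1},\romb{2})$ with $(\word^N_\alpha,\wor,\varLambda,\romb{0},\romb{1},\romb{2})\models\varphi$ iff $\WSExt{(\omega_\alpha,\ffont{R})}\models\varphi^{*}$.

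Since $\Th(\WSExt{(\omega_\alpha,\ffont{R})})$ is decidable by Theorem \ref{Braud_theorem}, membership of a sentence $\varphi$ in $\Th(\word^N_\alpha,\wor,\varLambda,\romb{0},\romb{1},\romb{2})$ is then decided by computing $\varphi^{*}$ and asking whether $\varphi^{*}\in\Th(\WSExt{(\omega_\alpha,\ffont{R})})$; this gives the theorem. I do not expect a genuine obstacle here: the substantive work is entirely contained in the lemmas --- above all in Lemma \ref{WMMSO_words_interpretability}, which encodes a normal-form word as the finite multiset of $o_1$-values of its $0$-separated $\sfont{S}_1$-blocks and makes $\wor$, $\romb{1}$, $\romb{2}$ first-order definable over multisets --- so the only thing left to watch for in the theorem proper is that each lemma is invoked with an admissible parameter (here $\omega_\alpha$: closed under $\psi$ and below $\varepsilon_0$) and that interpretability is indeed transitive and preserves decidability.
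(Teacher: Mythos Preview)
Your proposal is correct and follows exactly the paper's approach: the paper simply states that the theorem follows by combining Lemmas \ref{WMMSO_words_interpretability}, \ref{WMSO_WMMSO_interpretability}, \ref{psi_interpretability}, and Theorem \ref{Braud_theorem}. Your write-up just makes explicit the hypothesis checks (that $\omega_\alpha$ is closed under $\psi$ and lies in $[1,\varepsilon_0)$) and the standard facts that interpretations compose and that an interpretation yields a computable reduction of decision problems.
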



\section{Elementary equivalence of some models}
\label{elementary_equivalence_section}
In the section we show that  $(\word^N_\omega,\varLambda,\wor,\romb{0},\romb{1},\romb{2})$ and $(\word^N_3,\varLambda,\wor,\romb{0},\romb{1},\romb{2})$ are elementary equivalent. Thus we show that $\Th(\word^N_\omega,\varLambda,\wor,\romb{0},\romb{1},\romb{2})$ is decidable. We give a stronger form of Theorem \ref{weak_decidability_theorem}. Here we use the classical result by A.~Ehrenfeucht about elementary equivalency \cite{Ehr61} .

\begin{remark} \label{set-theoretic_formalism_remark}Further, we consider the notions of  ordinals, pairs, functions, and sequences in the set-theoretic fashion. We use von Neuman ordinals $\alpha=\{\beta\mid \beta<\alpha\}$. We use the Kuratowski definition of ordered pair $(x,y)=\{\{x\},\{x,y\}\}$. We consider functions $f$ as the set of pairs $\{(x,f(x))\mid x\in\dom(f)\}$. We consider sequences $\langle a_{\beta}\mid \beta<\alpha\rangle$ as the functions $\{(\beta,a_{\beta})\mid \beta<\alpha\}$. 
\end{remark} 

Suppose $\mfont{A}$ is a structure without functional symbols in the signature. We define model $\HFExt{\mfont{A}}$ with the signature that extends the signature of $\mfont{A}$ by the binary predicate symbol $\in$ and the unary predicate symbol $\ffont{At}$. The domain of the model $\HFExt{\mfont{A}}$ is the set $\sfont{A}^+$. The set $\sfont{A}^+$ is the minimal set such that $\sfont{A}\times \{\omega\}\subset \sfont{A}^+$ and $\pfin(\sfont{A}^+)\subset \sfont{A}^+$. Obviously, $\sfont{A}^{+}$ exists and unique. We define standard embedding $\pi_{\sfont{A}}\colon \sfont{A} \to \sfont{A}^+$, for every  $a\in \sfont{A}$, we put $\pi_{\sfont{A}}(a)=(a,\omega)$. Note that $a^+\in \sfont{A}^+$ is of the form $(x,\omega)$ iff $a^+\in\pi_{\sfont{A}}[\sfont{A}]$. Interpretations of a predicate symbol $\ffont{P}(x_1,\ldots,x_n)$ from the signature of $\mfont{A}$ in the model $\HFExt{\mfont{A}}$ is the following:
$$\HFExt{\mfont{A}}\models \ffont{P}(a^+_1,\ldots,a^+_n)\defiff a_1^+,\ldots,a_n^+\in \pi_{\sfont{A}}[\sfont{A}] \mbox{ and } \mfont{A}\models \ffont{P}(\pi_{\sfont{A}}^{-1}(a_1^+),\ldots,\pi_{\sfont{A}}^{-1}(a_n^+)).$$
For every $a^+\in \sfont{A}^+$ 
 $$\mfont{A}^+\models \ffont{At}(a^+)\defiff a^+\in\pi_{\sfont{A}}[\sfont{A}].$$
For all $a_1^+,a_2^+\in \sfont{A}^+$
$$\HFExt{\mfont{A}}\models a_1^+\in a_2^+\defiff a_2^+\not\in\pi_{\sfont{A}}[\sfont{A}]\mbox{ and }a_1^+\in a_2^+.$$
We have defined the model $\HFExt{\mfont{A}}$.
If $a^+\in\HFExt{\mfont{A}}$ such that $\HFExt{\mfont{A}}\models \ffont{At}(a^+)$, then we call $a^+\in\HFExt{\mfont{A}}$ an atom.

We define the notion of {\it $\omega$-tail} of an ordinal $\alpha$ with the Cantor normal form $\omega^{\alpha_1}+\omega^{\alpha_2}+\ldots+\omega^{\alpha_n}$. If $\alpha<\omega^\omega$, then $\omega$-tail of $\alpha$ is equal to $\alpha$. If $\alpha\ge\omega^\omega$, then the $\omega$-tail of $\alpha$ is the ordinal $\omega^\omega+\omega^{\alpha_{k}}+\ldots+\omega^{\alpha_n}$, where $k$ is the minimal number such that $\alpha_i<\omega$, for all $i$ from $k$ to $n$. 

In \cite{Ehr61} A.~Ehrenfeucht have proved that models $\HFExt{(\alpha_1,<)}$ and $\HFExt{(\alpha_2,<)}$ are elementary equivalent, for $\alpha_1$ and $\alpha_2$ with the same $\omega$-tail. Note that  for all $\alpha\in[2,\omega]$ the $\omega$-tails of $\omega_\alpha$ are the same. 

\begin{lemma} \label{RFOT_WMSO_interpretability} For an ordinal $\alpha\in \Lim$ the model $\WSExt{(\omega^\alpha,<,\psi)}$ is interpretable in $\HFExt{(\alpha,<)}$.
\end{lemma}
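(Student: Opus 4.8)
The plan is to encode ordinals below $\omega^\alpha$ by their Cantor normal forms, which are finite decreasing sequences of exponents drawn from $\alpha$ (with multiplicities). An ordinal $\beta<\omega^\alpha$ with $\beta=\omega^{\beta_1}+\dots+\omega^{\beta_n}$, $\beta_1\ge\dots\ge\beta_n$, will be represented inside $\HFExt{(\alpha,<)}$ by a hereditarily finite set coding the finite multiset $\{\beta_1,\dots,\beta_n\}$ of exponents; a finite set $\sfont{Q}\in\pfin(\omega^\alpha)$ will then be represented by a finite set of such codes. Since $\alpha$ is a limit ordinal, all the required exponents lie in $\alpha$, and since the multiset is finite it has a hereditarily finite code. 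First I would fix, inside $\HFExt{(\alpha,<)}$, a definable coding of finite multisets of atoms (e.g. a finite set of Kuratowski pairs $(\xi,k)$ where $\xi$ is an atom and $k$ a hereditarily finite natural number, with $k$ recording multiplicity), and check that ``being such a code'' and the multiplicity function are definable; the standard natural numbers and ordered pairs are definable in any $\HFExt{\mfont{A}}$ using $\in$ and $\ffont{At}$, so this is routine.

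Next I would write down the defining formulas for the two domains and for the interpreted symbols. The domain $\ffont{D}_1(x)$ of ordinals picks out codes of finite multisets of atoms; the domain $\ffont{D}_2(x)$ picks out finite sets all of whose members satisfy $\ffont{D}_1$. The interpretation of $<$ is the lexicographic comparison of the exponent multisets read in decreasing order: given two codes, compare their largest exponents, breaking ties by multiplicity and then recursing — this is expressible by a single first-order formula quantifying over the finitely many atoms involved, using the order $<$ on atoms supplied by $(\alpha,<)$. The interpretation of $\in$ between an ordinal-code and a set-code is just membership in $\HFExt{(\alpha,<)}$ after unwrapping. The only genuinely new symbol is $\psi$. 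Here I would use the explicit description of $\psi$ from its definition: $\psi(0)=\omega=\omega^1$, and for $\beta=\omega^{\beta_1}+\dots+\omega^{\beta_n}$ with least exponent $\beta_n$, $\psi(\beta)$ replaces the block of $\omega^{\beta_n}$'s by a single $\omega^{\beta_n+1}$. In the multiset coding this is: delete all copies of the minimal exponent $\xi$ and add one copy of $\xi+1$ (and the successor of an atom below $\alpha$ is again below $\alpha$ since $\alpha$ is a limit, so this stays inside the domain); the case $\beta=0$ maps the empty multiset to $\{(1,1)\}$, i.e. the code of $\omega$. All of this is first-order definable using successor on atoms, which is definable in $(\alpha,<)$ for limit $\alpha$ by Lemma \ref{Uni_definitions}.

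Finally I would verify that the map sending $\beta\mapsto$ its Cantor-normal-form code and $\sfont{Q}\mapsto$ the set of codes of its elements is an injection onto the sets defined by $\ffont{D}_1$ and $\ffont{D}_2$, and that under this map $<$, $\psi$, and $\in$ go to the interpreted relations — which amounts to: Cantor normal form exists and is unique (stated in the excerpt), lexicographic comparison of CNF exponent sequences agrees with ordinal order, and the explicit formula for $\psi$ above is correct (immediate from its definition). The main obstacle, such as it is, is bookkeeping: getting the lexicographic-with-multiplicities comparison and the $\psi$ rule written as honest first-order formulas over $\HFExt{(\alpha,<)}$ and confirming that every exponent and every auxiliary natural number that occurs genuinely lives in the model — the limit hypothesis on $\alpha$ is exactly what makes ``$\xi<\alpha\Rightarrow\xi+1<\alpha$'' hold and hence what makes $\psi^I$ land in the intended domain. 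No deep idea is needed beyond the Cantor-normal-form encoding; the work is in assembling the interpretation carefully.
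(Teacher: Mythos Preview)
Your proposal is correct and follows essentially the same approach as the paper: encode each $\beta<\omega^\alpha$ by its Cantor normal form exponents inside $\HFExt{(\alpha,<)}$, where enough of $\ZF$ holds to carry out the bookkeeping, and read off $<$, $\psi$, and $\in$ from that encoding. The only cosmetic difference is that the paper represents an ordinal by the \emph{sequence} $(\pi_\alpha(\beta_1),\ldots,\pi_\alpha(\beta_n))$ of atoms rather than by a multiset of $(\text{atom},\text{multiplicity})$ pairs, and it describes $\psi^I$ as ``the lexicographically minimal non-increasing atom-sequence ending in $\pi_\alpha(\beta_n+1)$ that is lexicographically greater than the input''; your ``delete all copies of the minimal exponent and add one copy of its successor'' is an equivalent description of the same function on codes.
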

\sprv Clearly,  all axioms of $\ZF$, but Infinity Axiom and Extensionality Axiom, holds in $\HFExt{(\alpha,<)}$. A natural modification of Extensionality Axiom holds in $\HFExt{(\alpha,<)}$ 
 $$\forall x,y(\lnot \ffont{At}(x)\&  \lnot \ffont{At}(y) \& \forall z (z\in x\leftrightarrow z\in y) \to x=y).$$
 
Thus in $\HFExt{(\alpha,<)}$ we can formalize the notions from Remark \ref{set-theoretic_formalism_remark}.

Suppose $\beta\in \omega^\alpha$ and  $\omega^{\beta_1}+\ldots+\omega^{\beta_n}$ is the Cantor normal form of $\beta$. Then  we put $\beta^I=(\pi_\alpha(\beta_1),\ldots,\pi_\alpha(\beta_n))$. In $\HFExt{(\alpha,<)}$ the set of all interpretations of ordinals is definable as the set of all monotone non-decreasing sequences of atoms. For a set $\sfont{A}\in\pfin(\omega^\alpha)$, the interpretation of $\sfont{A}$ is $\sfont{A}^I=\{\beta^I\mid \beta\in \sfont{A}\}$. Obviously, the set of all interpretations of sets is definable in $\HFExt{(\alpha,<)}$. The predicate $\in$ is interpretable in a natural way. We define $<^I$ the interpretation of $<$ as the lexicographic order on monotone non-decreasing sequences of atoms. Let us define function $\psi^I$ the interpretation of $\psi$. $\psi^I((\pi_\alpha(\beta_1),\ldots,\pi_\alpha(\beta_n)))$  is equal to lexicographically minimal sequence that ends with $\pi_\alpha(\beta_n+1)$ and is lexicographically greater than $(\pi_\alpha(\beta_1),\ldots,\pi_\alpha(\beta_n))$.\eprv

Note that the translations that can be extracted from the proofs of Lemmas \ref{WMMSO_words_interpretability}, \ref{WMSO_WMMSO_interpretability}, and \ref{RFOT_WMSO_interpretability} are independent of parameters of pairs of structures. Hence from Lemma \ref{RFOT_WMSO_interpretability} it follows that the following corollaries holds:

\begin{corollary} For all $\alpha_1,\alpha_2\in [3,\omega]$, the models $\WSExt{(\omega_{\alpha_1},<,\psi)}$ and $\WSExt{(\omega_{\alpha_2},<,\psi)}$ are elementary equivalent. 
\end{corollary}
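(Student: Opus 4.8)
The plan is to transport elementary equivalence along the interpretation of Lemma~\ref{RFOT_WMSO_interpretability}, using the Ehrenfeucht result recalled above \cite{Ehr61} as the base case. First I would record that, for every $\alpha\in[3,\omega]$, one has $\omega_\alpha=\omega^{\gamma_\alpha}$ for a limit ordinal $\gamma_\alpha$: indeed $\gamma_\alpha=\omega_{\alpha-1}$ when $\alpha$ is finite and $\gamma_\omega=\varepsilon_0=\omega_\omega$, so in every case $\gamma_\alpha=\omega_\beta$ for some $\beta\in[2,\omega]$, and each such $\omega_\beta$ is a limit ordinal. Hence Lemma~\ref{RFOT_WMSO_interpretability} applies with $\gamma_\alpha$ in place of the ``$\alpha$'' of its statement and yields an interpretation of $\WSExt{(\omega_\alpha,<,\psi)}=\WSExt{(\omega^{\gamma_\alpha},<,\psi)}$ in $\HFExt{(\gamma_\alpha,<)}$.

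Next I would use the fact, already noted just before the corollary, that the translation delivered by Lemma~\ref{RFOT_WMSO_interpretability} does not depend on the chosen pair of structures: the domain formulas and the formulas interpreting $\in$, $<$ and $\psi$ form one fixed list in the language of $\HFExt{(\cdot,<)}$. Consequently there is a single map $\varphi\mapsto\varphi^I$ on sentences of the signature of $\WSExt{(\omega^\gamma,<,\psi)}$ such that $\WSExt{(\omega^\gamma,<,\psi)}\models\varphi$ iff $\HFExt{(\gamma,<)}\models\varphi^I$, uniformly for all limit ordinals $\gamma$; this is the routine fact that a parameter-free, possibly many-sorted, interpretation commutes with the Boolean connectives and with quantifiers relativised to the domain formulas, the only bookkeeping being that these domain formulas are provably non-empty and that $\varphi^I$ is again a sentence, both visible from the construction.

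It then remains to match the base models. Since $\gamma_{\alpha_1}$ and $\gamma_{\alpha_2}$ are both of the form $\omega_\beta$ with $\beta\in[2,\omega]$, and all such ordinals share the same $\omega$-tail (namely $\omega^\omega$), the Ehrenfeucht result shows that $\HFExt{(\gamma_{\alpha_1},<)}$ and $\HFExt{(\gamma_{\alpha_2},<)}$ are elementarily equivalent. Chaining the equivalences, for every sentence $\varphi$ we obtain
\begin{align*}
\WSExt{(\omega_{\alpha_1},<,\psi)}\models\varphi
&\iff \HFExt{(\gamma_{\alpha_1},<)}\models\varphi^I\\
&\iff \HFExt{(\gamma_{\alpha_2},<)}\models\varphi^I\\
&\iff \WSExt{(\omega_{\alpha_2},<,\psi)}\models\varphi,
\end{align*}
which is exactly the claimed elementary equivalence.

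I do not expect a genuine obstacle: the substance lies entirely in Lemma~\ref{RFOT_WMSO_interpretability} and in the Ehrenfeucht result, and what remains is assembly. The one point I would double-check is the arithmetic of $\omega$-tails --- that each $\gamma_\alpha$ with $\alpha\in[3,\omega]$ really has $\omega$-tail $\omega^\omega$, and that the endpoint $\alpha=2$ is genuinely excluded (there $\gamma_2=\omega$, whose $\omega$-tail is $\omega\ne\omega^\omega$), which is precisely why the corollary is stated over $[3,\omega]$ rather than $[2,\omega]$.
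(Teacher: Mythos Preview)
Your proposal is correct and follows exactly the paper's approach: write $\omega_{\alpha_i}=\omega^{\gamma_{\alpha_i}}$ with $\gamma_{\alpha_i}\in\Lim$, apply the uniform interpretation of Lemma~\ref{RFOT_WMSO_interpretability}, and then invoke Ehrenfeucht's result via the observation that all the $\gamma_{\alpha_i}=\omega_\beta$ with $\beta\in[2,\omega]$ share the same $\omega$-tail. Your additional remark pinpointing why $\alpha=2$ is excluded is a nice explicit check that the paper leaves to the reader.
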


\begin{corollary}  For all $\alpha_1,\alpha_2\in [3,\omega]$, the models $\WMSExt{(\omega_{\alpha_1},<,\psi)}$ and $\WMSExt{(\alpha_2,<,\psi)}$ are elementary equivalent. 
\end{corollary}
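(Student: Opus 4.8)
The plan is to derive the claim from the preceding corollary together with Lemma \ref{WMSO_WMMSO_interpretability}, using the elementary fact that a parameter-free interpretation transports elementary equivalence: if one and the same translation scheme $\ffont{tr}$ interprets a model $\mfont{B}_1$ in a model $\mfont{A}_1$ and also interprets a model $\mfont{B}_2$ in a model $\mfont{A}_2$, and if $\mfont{A}_1\equiv\mfont{A}_2$, then $\mfont{B}_1\equiv\mfont{B}_2$. This is immediate, since for every sentence $\varphi$ of the signature of the $\mfont{B}_i$ one has $\mfont{B}_i\models\varphi$ iff $\mfont{A}_i\models\varphi^{\ffont{tr}}$, and $\mfont{A}_1\equiv\mfont{A}_2$ identifies the two right-hand sides. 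So it suffices to exhibit a single $\ffont{tr}$ interpreting $\WMSExt{(\omega_{\alpha_i},<,\psi)}$ in $\WSExt{(\omega_{\alpha_i},<,\psi)}$ uniformly in $i$, and to know that $\WSExt{(\omega_{\alpha_1},<,\psi)}$ and $\WSExt{(\omega_{\alpha_2},<,\psi)}$ are elementary equivalent.

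Fix $\alpha_1,\alpha_2\in[3,\omega]$. Since $\alpha_i\ge 2$, Remark \ref{psi_closed} shows that each $\omega_{\alpha_i}$ is closed under $\psi$, so Lemma \ref{WMSO_WMMSO_interpretability} yields an interpretation of $\WMSExt{(\omega_{\alpha_i},<,\psi)}$ in $\WSExt{(\omega_{\alpha_i},<,\psi)}$; by the remark stated just before these corollaries, the translation scheme coming out of that proof does not depend on which $\psi$-closed base ordinal is substituted, so it is literally the same $\ffont{tr}$ for $i=1$ and $i=2$. The second ingredient is exactly the preceding corollary. Applying the transport principle with $\mfont{A}_i=\WSExt{(\omega_{\alpha_i},<,\psi)}$ and $\mfont{B}_i=\WMSExt{(\omega_{\alpha_i},<,\psi)}$ then gives $\WMSExt{(\omega_{\alpha_1},<,\psi)}\equiv\WMSExt{(\omega_{\alpha_2},<,\psi)}$. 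As a self-contained variant that bypasses the preceding corollary, one can instead compose Lemmas \ref{WMSO_WMMSO_interpretability} and \ref{RFOT_WMSO_interpretability} into one parameter-free interpretation of $\WMSExt{(\omega_{\alpha_i},<,\psi)}$ in $\HFExt{(\gamma_i,<)}$, where $\omega_{\alpha_i}=\omega^{\gamma_i}$ for a limit ordinal $\gamma_i$ ($\gamma_i=\omega_{\alpha_i-1}$ if $\alpha_i$ is finite, and $\gamma_i=\varepsilon_0$ if $\alpha_i=\omega$), and then invoke Ehrenfeucht's theorem \cite{Ehr61}, which applies because all the $\gamma_i$ share the same $\omega$-tail $\omega^\omega$.

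I expect the only step needing real care — as opposed to routine ``interpretation commutes with satisfaction'' bookkeeping — to be the uniformity claim: one must check that the domain formulas and the formulas interpreting $<$, $\psi$, $\in_M$ and $\subset_M$ written down in the proof of Lemma \ref{WMSO_WMMSO_interpretability} (and, in the variant, also those in Lemmas \ref{WMMSO_words_interpretability} and \ref{RFOT_WMSO_interpretability}) are genuinely the same across the family of base structures in play, so that a single $\ffont{tr}$ serves both $\alpha_1$ and $\alpha_2$. This is precisely the content of the remark already recorded before the statement of the corollaries, so in writing up the proof I would simply cite it; everything else reduces to the preceding corollary, Remark \ref{psi_closed}, and (for the variant) Ehrenfeucht's theorem.
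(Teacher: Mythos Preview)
Your argument is correct and is exactly the paper's intended one: the paper records, just before the block of corollaries, that the translations from Lemmas \ref{WMMSO_words_interpretability}, \ref{WMSO_WMMSO_interpretability}, and \ref{RFOT_WMSO_interpretability} are uniform in the base structure, and then derives all three corollaries at once; your write-up simply makes the transport-of-elementary-equivalence step explicit for the $\WMSExt{}$ case (with the composed variant via Ehrenfeucht being the paper's implicit route as well).
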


\begin{corollary} \label{word_elementary_equivalence} Suppose  $\alpha\in[3,\omega]$. Then the models $(\word^N_\alpha,\varLambda,\wor,\romb{0},\romb{1},\romb{2})$ and $(\word^N_3,\varLambda,\wor,\romb{0},\romb{1},\romb{2})$ are elementary equivalent. 
\end{corollary}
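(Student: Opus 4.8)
The plan is to compose the interpretations of Lemmas \ref{WMMSO_words_interpretability}, \ref{WMSO_WMMSO_interpretability} and \ref{RFOT_WMSO_interpretability} into a single translation from the language of $(\word^N_\alpha,\varLambda,\wor,\romb{0},\romb{1},\romb{2})$ into the language of a structure $\HFExt{(\beta,<)}$, and then to pull back the elementary equivalence of these $\HFExt$-structures (Ehrenfeucht) along that translation. Write $\beta_\alpha$ for the ordinal with $\omega^{\beta_\alpha}=\omega_\alpha$; thus $\beta_\alpha=\omega_{\alpha-1}$ for $\alpha<\omega$ and $\beta_\omega=\varepsilon_0=\omega_\omega$, and for $\alpha\in[3,\omega]$ we have $\beta_\alpha\in\Lim$. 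First I would observe that $\omega_\alpha$ is closed under $\psi$ (Remark \ref{psi_closed}), so Lemma \ref{WMSO_WMMSO_interpretability} applies, and that $\beta_\alpha\in\Lim$, so Lemma \ref{RFOT_WMSO_interpretability} applies to $\WSExt{(\omega^{\beta_\alpha},<,\psi)}=\WSExt{(\omega_\alpha,<,\psi)}$. Chaining the three lemmas gives that $(\word^N_\alpha,\varLambda,\wor,\romb{0},\romb{1},\romb{2})$ is interpretable in $\HFExt{(\beta_\alpha,<)}$.

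The point flagged in the remark preceding the corollary now does the real work: the three translations extracted from the proofs of Lemmas \ref{WMMSO_words_interpretability}, \ref{WMSO_WMMSO_interpretability}, \ref{RFOT_WMSO_interpretability} are fixed formulas that do not mention $\alpha$; the ordinals $\omega_\alpha$, $\beta_\alpha$ enter only as the ranges of the relativized quantifiers, never inside the interpreting formulas. Hence the composite is one fixed map $\varphi\mapsto\varphi^{\mathrm{tr}}$ with the property that, for every $\alpha\in[3,\omega]$ and every sentence $\varphi$ of the word-language,
$$(\word^N_\alpha,\ldots)\models\varphi\iff\HFExt{(\beta_\alpha,<)}\models\varphi^{\mathrm{tr}}.$$

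Next I would invoke Ehrenfeucht's theorem. As $\alpha$ runs over $[3,\omega]$, the ordinal $\beta_\alpha$ runs over $\omega_2,\omega_3,\ldots$ together with $\omega_\omega$, and by the remark recalled earlier all of $\omega_k$, $k\in[2,\omega]$, share the same $\omega$-tail; therefore $\HFExt{(\beta_\alpha,<)}$ and $\HFExt{(\beta_3,<)}$ are elementarily equivalent. Applying the displayed biconditional once with $\alpha$ and once with $3$ then gives, for every sentence $\varphi$,
\begin{align*}
(\word^N_\alpha,\ldots)\models\varphi&\iff\HFExt{(\beta_\alpha,<)}\models\varphi^{\mathrm{tr}}\\
&\iff\HFExt{(\beta_3,<)}\models\varphi^{\mathrm{tr}}\iff(\word^N_3,\ldots)\models\varphi,
\end{align*}
which is exactly the asserted elementary equivalence.

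The one thing that requires care is the uniformity bookkeeping rather than any single deduction: one must check that the domain formulas, the coordinate embeddings, and the formulas interpreting $\wor,\varLambda,\romb{0},\romb{1},\romb{2}$ and the auxiliary symbols $<,\psi,\in,\in_M,\subset_M$ in the three lemmas are literally the same syntactic objects for every $\alpha$, and that composing parameter-free interpretations again yields a parameter-free interpretation whose evaluation in $\HFExt{(\beta_\alpha,<)}$ is isomorphic to $(\word^N_\alpha,\varLambda,\wor,\romb{0},\romb{1},\romb{2})$. Granting that, the transfer of elementary equivalence along a fixed interpretation is routine.
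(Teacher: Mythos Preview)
Your proposal is correct and follows essentially the same route as the paper: compose the uniform (parameter-free) interpretations from Lemmas \ref{WMMSO_words_interpretability}, \ref{WMSO_WMMSO_interpretability}, \ref{RFOT_WMSO_interpretability} to land in $\HFExt{(\beta_\alpha,<)}$, invoke Ehrenfeucht's result that the $\HFExt{(\omega_k,<)}$ for $k\in[2,\omega]$ are elementarily equivalent (same $\omega$-tail), and transfer back along the fixed translation. The paper presents this as a cascade of corollaries rather than spelling out the composite, but the content is the same; your identification of $\beta_\alpha=\omega_{\alpha-1}$ (resp.\ $\varepsilon_0$) and the verification that $\beta_\alpha\in\Lim$ are exactly the checks needed.
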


From Corollary \ref{word_elementary_equivalence} and Theorem \ref{weak_decidability_theorem} we obtain the following stronger version of Theorem \ref{weak_decidability_theorem}:

\begin{theorem} \label{decidability_theorem}For all $\alpha\in[2,\omega]$, the theory $\Th(\word^N_\alpha,\varLambda,\wor,\romb{0},\romb{1},\romb{2})$ is decidable.
\end{theorem}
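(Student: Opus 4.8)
The plan is to combine the two prior results directly. Theorem \ref{weak_decidability_theorem} already gives decidability of $\Th(\word^N_\alpha,\wor,\varLambda,\romb{0},\romb{1},\romb{2})$ for every finite $\alpha\in[2,\omega)$; what is missing is only the case $\alpha=\omega$, i.e.\ the full notation system. So first I would invoke Corollary \ref{word_elementary_equivalence}: for every $\alpha\in[3,\omega]$ the structure $(\word^N_\alpha,\varLambda,\wor,\romb{0},\romb{1},\romb{2})$ is elementarily equivalent to $(\word^N_3,\varLambda,\wor,\romb{0},\romb{1},\romb{2})$. In particular, taking $\alpha=\omega$ and $\alpha=3$, the models $(\word^N_\omega,\varLambda,\wor,\romb{0},\romb{1},\romb{2})$ and $(\word^N_3,\varLambda,\wor,\romb{0},\romb{1},\romb{2})$ have the same elementary theory.

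Next I would observe that elementary equivalence means $\Th(\word^N_\omega,\varLambda,\wor,\romb{0},\romb{1},\romb{2})=\Th(\word^N_3,\varLambda,\wor,\romb{0},\romb{1},\romb{2})$ as sets of sentences. By Theorem \ref{weak_decidability_theorem} applied with $\alpha=3$ (note $3\in[2,\omega)$), the right-hand side is a decidable set of sentences. Hence the left-hand side is decidable as well, since they are literally the same set. This settles $\alpha=\omega$.

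Finally I would assemble the full range $\alpha\in[2,\omega]$: for $\alpha\in[2,\omega)$ the statement is exactly Theorem \ref{weak_decidability_theorem}, and for $\alpha=\omega$ it is the conclusion of the previous paragraph. Therefore $\Th(\word^N_\alpha,\varLambda,\wor,\romb{0},\romb{1},\romb{2})$ is decidable for all $\alpha\in[2,\omega]$, which is the claim.

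There is essentially no obstacle left at this stage: all the real work has been front-loaded into Theorem \ref{weak_decidability_theorem} (the chain of interpretations through $\WMSExt{(\omega_\alpha,<,\psi)}$, $\WSExt{(\omega_\alpha,<,\psi)}$, $\WSExt{(\omega_\alpha,\ffont{R})}$ and Braud's theorem) and into Corollary \ref{word_elementary_equivalence} (which rests on Lemma \ref{RFOT_WMSO_interpretability} and Ehrenfeucht's theorem on the $\omega$-tail). The only thing to be careful about is the logical direction: one needs the decidable instance ($\alpha=3$) to transfer decidability \emph{to} $\alpha=\omega$ via elementary equivalence, rather than the other way around, but since elementary equivalence is symmetric and identifies the two theories outright, this is immediate.
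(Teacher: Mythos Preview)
Your proposal is correct and matches the paper's own argument exactly: the paper derives Theorem \ref{decidability_theorem} directly from Corollary \ref{word_elementary_equivalence} and Theorem \ref{weak_decidability_theorem}, which is precisely the combination you spell out. There is nothing to add.
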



\bibliographystyle{plain}
\bibliography{bibliography_en}
\end{document}